\documentclass[11pt]{amsart}
\usepackage{graphicx}
\usepackage{amsmath}
\usepackage{amsfonts}
\usepackage{amssymb}
\usepackage{xcolor}
\voffset -32pt
\hoffset -56pt
\textwidth 16cm
\textheight 23cm 
\newtheorem{theorem}{Theorem}[section]

\newtheorem{conjecture}[theorem]{Conjecture}

\newtheorem{definition}[theorem]{Definition}

\newtheorem{lemma}[theorem]{Lemma}
\newtheorem{notation}[theorem]{Notation}

\newtheorem{remark}[theorem]{Remark}

\newtheorem{thm}{Theorem}[section]

\newtheorem{lem}[thm]{Lemma}

\theoremstyle{definition}

\def\C{\mbox{${\mathbb C}$}}
\def\D{\mbox{${\mathbb D}$}}
\def\T{\mbox{${\mathbb T}$}}

\def\K{\mbox{${\mathbb K}$}}

\def\Mob{M\"{o}b }

\def\reps{{representations~}}%
\def\pr{{projective~}}%
\def\con{{contraction~}}%
\def\cf{{characteristic function}}%

\begin{document}

\title[A product formula]{A product formula for homogeneous\\ characteristic functions}

\author[B. Bagchi]{Bhaskar Bagchi}
\address[B. Bagchi]{Statistics and Mathematics Unit, Indian Statistical Institute, Bangalore 560 059} 
\curraddr[B. Bagchi]{\#1363, 10th Cross Road, Kengeri Satellite Town, Bangalore 560060}
\email[B. Bagchi]{bbagchi@isibang.ac.in \mbox{\emph{and}} bhaskarbagchi53@gmail.com} 
\author[S. Hazra]{Somnath Hazra}
\author[G. Misra]{Gadadhar Misra}
\address[S. Hazra and G. Misra]{Department of Mathematics, Indian Institute of Science,  Bangalore 560012}
\curraddr[S.  Hazra]{Department of Mathematics and Statistics, Indian Institute of Science Education \phantom{aaa}and Research Kolkata, Mohanpur, Nadia - 741 246}
\email[G. Misra]{gm@iisc.ac.in}
\email[S. Hazra]{somnath.hazra.2008@gmail.com}

\dedicatory{This paper is dedicated to the memory of Ronald G. Douglas}
\keywords{Pure contractions, Sz.-Nagy--Foias characteristic function, Defect spaces, M\"{o}bius group, Homogeneous operators, Associator,  Projective representations, equivariant model theory}

\subjclass[2010]{Primary 47A20, 47A45; Secondary 47B32, 20C25}

\thanks{The work of G. Misra was supported, in part, through the J C Bose National Fellowship and UGC-CAS. The work of S. Hazra was supported through Research Fellowships of CSIR, IISc and a Post-doctoral Fellowship of the NBHM}

\begin{abstract}

A bounded linear operator $T$ on a Hilbert space is said to be homogeneous 
if $\varphi(T)$ is unitarily equivalent to $T$ for all $\varphi$ in the group 
M\"{o}b of bi-holomorphic automorphisms of the unit disc.
A projective unitary representation $\sigma$ of M\"{o}b 
is said to be associated with an operator T if 
$\varphi(T)= \sigma(\varphi)^\star T \sigma(\varphi)$ for all $\varphi$  in M\"{o}b.  

In this paper, we develop a M\"{o}bius equivariant version of the  Sz.-Nagy--Foias model theory for completely non-unitary (cnu) contractions. As an application, we prove that if T is a cnu contraction with associated (projective 
unitary) representation $\sigma$, then there is a unique projective unitary 
representation $\hat{\sigma}$, extending $\sigma$, associated with the minimal unitary dilation  of $T$. The representation $\hat{\sigma}$ is given in terms of $\sigma$ by the formula 
$$
\hat{\sigma} = (\pi \otimes D_1^+) \oplus \sigma \oplus (\pi_\star \otimes D_1^-),
$$
where $D_1^\pm$  are the two Discrete series representations (one holomorphic and the other anti-holomorphic) living on the Hardy space $H^2(\mathbb D)$, and $\pi, \pi_\star$  are representations of M\"{o}b living on the two defect spaces of $T$  defined explicitly in terms of $\sigma$.  

Moreover, a cnu contraction $T$ has an associated representation
if and only if its Sz.-Nagy--Foias characteristic function $\theta_T$ has the product form 
$\theta_T(z) = \pi_\star(\varphi_z)^* \theta_T(0) \pi(\varphi_z),$ $z\in \mathbb D$, where $\varphi_z$ is the involution in M\"{o}b mapping $z$ to $0.$ 
We obtain a concrete realization of this product formula 
for a large subclass of homogeneous cnu contractions from the Cowen-Douglas class. 
\end{abstract}

\maketitle 

\section{Introduction}

All Hilbert spaces in this paper are complex and separable. All operators are
linear and bounded operators between Hilbert spaces. For any two Hilbert spaces $\mathcal H$ and $\mathcal K$, $\mathcal B(\mathcal H, \mathcal K)$ denotes the Banach space of all operators from $\mathcal H$ to $\mathcal K$.  We shall abbreviate  $\mathcal B(\mathcal H, \mathcal H)$ to $\mathcal B(\mathcal H)$. 
The (M\"{o}bius) group of all
bi-holomorphic self-maps of the unit disc $\mathbb{D}$ (in the complex plane
$\mathbb{C}$) shall be denoted by M\"{o}b.  As a topological group (with the
topology of locally uniform convergence) it is isomorphic to PSU($1,1$) and to
PSL($2,\mathbb{R}$). 

Recall from \cite{Mi,CM,con} that an operator $T$  from a Hilbert space into
itself is said to be  {\sf \emph{homogeneous}} if $\varphi(T)$  is unitarily
equivalent to $T$  for all $\varphi$ in M\"ob which are analytic in a neighbourhood of the
spectrum of  $T$.  It  was shown in  \cite{con} that the spectrum of a
homogeneous operator $T$ is either the unit circle $\mathbb{T}$ or the closed
unit disc $\overline{\mathbb{D}}$, so that, actually, $\varphi(T)$ is
unitarily equivalent to $T$ for all $\varphi$ in M\"{o}b. 
Recall that (see \cite{Mackey,KRP,Va} for instance) a {\sf \emph{projective unitary  representation}} $\sigma$ of M\"{o}b on a Hilbert space $\mathcal H$ is a Borel function $\sigma:\mbox{\rm M\"{o}b} \to \mathcal U(\mathcal H)$, satisfying $\sigma(\rm{id}) = I$, for which there is a function $m:\mbox{\rm M\"{o}b} \times \mbox{\rm M\"{o}b} \to \mathbb T$ satisfying 
\begin{equation} \label{reprep}
\sigma(\varphi_1 \varphi_2)=  m(\varphi_1, \varphi_2) \sigma(\varphi_1) \sigma(\varphi_2), \,\,\varphi_1,\varphi_2 \in \mbox{\rm M\"{o}b}. 
\end{equation}
Here $\mathcal U(\mathcal H)$ is the topological group of unitary operators on the Hilbert space $\mathcal H$. 
Clearly $\sigma$ determines the function $m$  by the Equation \eqref{reprep} and $m$ is Borel. This function is called the {\sf\emph {multiplier}} 
of $\sigma$.  Clearly, $m(\varphi, {\rm id}) = 1 = m({\rm id}, \varphi)$.  Evaluating $\sigma(\varphi_1 \varphi_2 \varphi_3)$ in two different ways, one sees that the multiplier $m$ of any projective unitary representation satisfies the identity
\begin{equation} \label{mult}
m(\varphi_1, \varphi_2) m(\varphi_1 \varphi_2, \varphi_3) = m(\varphi_1, \varphi_2 \varphi_3) m(\varphi_2,\varphi_3), \,\,\varphi_1, \varphi_2,\varphi_3 \in \mbox{\rm M\"{o}b}.
\end{equation} 

Two projective unitary representations $\sigma, \tilde{\sigma}$ of 
M\"{o}b living on the respective Hilbert spaces $\mathcal H$, $\widetilde{\mathcal H}$ are said to be {\sf \emph{equivalent}} if there is a unitary $U:\mathcal H \to \widetilde{\mathcal H}$ and  a Borel function $f:\mbox{M\"{o}b} \to \mathbb T$ such that $\tilde{\sigma}(\varphi) = f(\varphi) U \sigma(\varphi) U^*$ for all $\varphi$ in M\"{o}b.  In this paper, by the word ``representation'', we always mean a projective unitary representation of M\"{o}b.

We say that a projective unitary representation $\sigma$ of M\"{o}b is {\sf \emph{associated}}  with an operator $T$ if
\[
\varphi(T)=\sigma(\varphi)^{\ast}T\sigma(\varphi)
\]
for all $\varphi$ in M\"{o}b.  We say that an
operator from a Hilbert space into itself is an {\sf \emph{associator}}   if there is a projective unitary representation of \Mob associated with it. Clearly, all associators are homogeneous, though the converse is not true.
However in \cite[Theorem 2.2]{shift} it is shown that, conversely, each
{\sf \emph{irreducible}}  homogeneous operator is an associator (see also \cite{Mi-Sa}), and -- further --
its associated representation is unique up to equivalence.


A huge number of (unitarily inequivalent) examples of homogeneous operators
are known. (See the survey article  \cite{survey} as well as the more recent papers \cite{shift,KM,ClassCD}).  Since the direct sum (more generally direct integral) of homogeneous operators
is again homogeneous, a natural problem is the classification (up to unitary equivalence) of {\sf \emph{atomic homogeneous operators}}, that is, those homogeneous operators which can not
be written as the direct sum of two homogeneous operators. In this generality,
this problem remains unsolved. A beginning in this direction was made in \cite{shift} 
where we classified the homogeneous scalar weighted shifts. Moreover, all the homogeneous operators in the Cowen-Douglas class have been described modulo unitary equivalence in the paper \cite{ClassCD}. Clearly,
irreducible homogeneous operators are atomic. In this connection, it is
amusing to note that we know of only two examples of atomic homogeneous
operators which are not irreducible. These are the multiplication operators --
by the respective co-ordinate functions -- on the Hilbert spaces
$L^{2}(\mathbb{T)}$ and $L^{2}(\mathbb{D)}$. Both of these examples happen to
be normal operators. We do not know if all atomic
homogeneous operators are associators.

Recall that an operator $T$ is said to be a contraction if $\left\|
T\right\|  \leq1$. The objective of this paper is to set up a theoretical framework for the eventual classification of all cnu contractive associators. This is achieved by an application of the  Sz.-Nagy--Foias theory \cite{Na-Fo} of {\sf \emph{cnu}} (completely non-unitary)
contractions. A contraction is said to be cnu if it has no unitary part
(that is, if it cannot be written as the direct sum of two operators one of which
is unitary). A contraction $T$ is said to be {\sf \emph{pure}} if $\left\|
Tx\right\|  <\left\|  x\right\|  $ for all non-zero vectors $x$. The
afore-mentioned theory \ attaches to any cnu contraction $T$ a pure
contraction valued analytic function on $\mathbb{D}$, called the
{\sf \emph{characteristic function}} of $T$. Two cnu contractions are unitarily
equivalent iff their characteristic functions $\theta_{1}$ and $\theta_{2}$
{\sf \emph{coincide}}  (that is, if and only if there exist two unitaries $u$ and $v$ such that
$u\theta_{1}(z)=\theta_{2}(z)v$ for all $z\in\mathbb{D}$). In Section 2 of
this paper, we briefly review this theory, mostly following Nikolski in \cite{Nik}, but with some twists of our own. With any cnu contraction, we begin by
associating its {\sf \emph{characteristic operator.}}  It carries exactly the same
information as the characteristic function; in fact it is easy to obtain one
in terms of the other. But it is the characteristic operator which emerges
most naturally from the study of {\sf \emph{minimal unitary (power) dilations.}}
Of course, this notion was always implicit in the theory -- we find it
convenient to make it explicit. Another innovation is to emphasize the natural
relationship between the Sz.-Nagy--Foias theory and the M\"{o}bius group. This is the content of Section 3.  It is surprising that
the role of the M\"{o}bius group in the Sz.-Nagy--Foias theory was never made
explicit nor was it used to its full potential. 

In Section 4 of this paper we exploit this relationship to prove that
if $T$ is a cnu contraction with associated (projective unitary)
representation $\sigma$, then there is a unique projective unitary
representation $\hat{\sigma}$, extending $\sigma$, associated with the
minimal unitary dilation $W$ of $T$. Its existence is a theorem from \cite{CM},
while its uniqueness is quite easy to establish. What is surprising is that we
are able to write an explicit and pretty formula for $\hat{\sigma}$ in
terms of $\sigma:$%
\[
\hat{\sigma}=(\pi\otimes D_{1}^{+})\oplus\sigma\oplus(\pi_{\ast}\otimes D_{1}^{-}),
\]
where $D_{1}^{\pm}$ are the two Discrete series representations (one
holomorphic and the other anti-holomorphic) living on the Hardy space
$H^{2}(\mathbb D)$, and $\pi$ and $\pi_{\ast}$ are representations of the
M\"{o}bius group living on the two defect spaces of $T$ and explicitly defined in
terms of $\sigma$ by the rather mysterious formulae presented in  Theorem \ref{tworeps}.  In the language of
Mackey (see \cite{Mackey}) the triple $(W,\hat{\sigma},\mathbb{T})$ is a system of
imprimitivity of the M\"{o}bius group.  The imprimitivity relationship, in this
case, is just the condition imposed by homogeniety on the operator $W$.  Thus
the study of cnu contractive associators via their minimal unitary
dilation is an equivariant version of the model theory for contractions
developed by Sz.-Nagy--Foias. 

In continuation of these ideas, we obtain a characterization of cnu contractive associators, first in terms of their characteristic operators,
and eventually in terms of their characteristic functions. This leads to a
pleasant {\sf \emph{product formula}} for the characteristic function $\theta$ of
any cnu contractive associator:
\begin{equation}\label{product}
\theta(z)=\pi_{\ast}(\varphi_{z})^{\ast}C\pi(\varphi_{z}),\quad
z\in\mathbb{D},
\end{equation}
where  for $z\in \mathbb{D}$,
$\varphi_{z}$ is the unique involution in \Mob which interchanges $0$ and $z$.
\ Also, $\pi,\pi_{\ast}$ are two projective representations of \Mob (living on
the defect spaces of $T$) with identical multipliers. The operator $C$ is a
pure contraction (from the space of $\pi$ to the space of $\pi_{\ast}$)
intertwining the restricted representations $\pi|_{\mathbb{K}}$ and $\pi
_{\ast}|_{\mathbb{K}}$ of the maximal compact subgroup $\mathbb{K}$ of \Mob: $\mathbb{K}=\{\varphi\in$\Mob $:\varphi(0)=0\}.$

The representations $\pi$ and $\pi_{\ast}$ of \Mob which
occur in the product formula are precisely the same representations which occur in
the above description of $\hat{\sigma}$. In view of the product formula,
we refer to the representations $\pi$ and $\pi_{\ast}$ as the {\sf \emph{ (right
and left) companions}} of the operator with characteristic function $\theta$.
Notice that they are {\sf \emph{compatible}} in the sense of having identical
multipliers. (Mutual compatibility is the obvious necessary and sufficient
condition on a family of projective representations for its direct integral to
define a projective representation.) As a converse, we show that whenever
$\pi$ and $\pi_{\ast}$ are two compatible projective unitary representations
of \Mob and $C:\mathcal{H}_{\pi}\longrightarrow\mathcal{H}_{\pi_{\ast}}$ is a
pure contraction intertwining their restrictions to $\mathbb{K}$, the function
$\theta$ defined by the product formula \eqref{product} is a {\sf \emph{homogeneous
characteristic function}} (that is, the characteristic function of a 
cnu contractive associator) -- provided, of course, that $\theta$ is analytic. Thus, within the class of cnu contractions, the associators are characterized by the presence of such a product formula. 

In the penultimate section, we find explicit product formulae for the characteristic functions of most of the irreducible homogeneous contractions in the Cowen-Douglas class $B_n(\mathbb D)$ whose associated representation is multiplicity free. In the final section, we present a similar description for an extremal family in this class and state a conjectural complete description for the entire class. 

In consequence of the results of this paper, the problem of classification of cnu contractive associators  boils down to the following question. For any two compatible projective unitary representations $\pi$ and $\pi_*$ of M\"{o}b, living on the Hilbert spaces $\mathcal{H}$ and $\mathcal{H}_*$, let $V(\pi, \pi_*)$ denote the Banach space of all bounded operators $C: \mathcal{H} \to \mathcal{H}_*$ such that $C$ intertwines $\pi_{|\mathbb{K}}$ and ${\pi_*}_{|\mathbb{K}}$, and the function $z \mapsto \pi_*(\varphi_z)^*C\pi(\varphi_z)$ is holomorphic on $\mathbb{D}$. Determine $V(\pi, \pi_*)$ and find all pure contractions in this space.


\section{Sz.-Nagy--Foias Theory.}

In this section we provide a convenient summary of the theory of cnu
contractions, their dilations and characteristic functions. This summary largely follows the exposition in \cite{Nik}.

\subsection{Minimal power dilations -- isometric, co-isometric and unitary.}

If $\mathcal{H}$ is a Hilbert subspace of a Hilbert space $\mathcal{K}$,  
$i:\mathcal{H\longrightarrow K}$ is the inclusion map and
$X:\mathcal{K\longrightarrow K}$ is an operator, then $T:=i^{\ast}Xi: \mathcal{H\longmapsto H}$ is called the { compression} of $X$ to
$\mathcal{H}$ and $X$ is called a \emph{dilation }of $T$. If, further, we
have $p(T)=i^{\ast}p(X)i$ for all polynomials $p\in\mathbb{C}[z]$, then $T$ is
called the \emph{power compression} of $X$ to $\mathcal{H}$ and $X$ is
called a \emph{power dilation} of $T$. A famous lemma of Sarason (cf. \cite{Nik})
says that the dilation $X$ of $T$ is a power dilation if and only if $\mathcal{H}$ is a
semi-invariant subspace (that is, the intersection of an invariant subspace with
a co-invariant subspace) for $X$. If the power dilation $X$ of $T$ is an
isometry/co-isometry/unitary then it is called an isometric/co-isometric/unitary power dilation. If, further, there is no Hilbert space
$\mathcal{K}_{0}$ with $\mathcal{H\subseteq K}_{0}\subset\mathcal{K}$ such
that the compression of $X$ to $\mathcal{K}_{0}$ is an isometric /
co-isometric / unitary power dilation of $T$ then $X$ is called a
{\sf \emph{minimal isometric{\rm /} co-isometric {\rm /} unitary power dilation}} of
$T$. Obviously $\left\|  T\right\|  \leq\left\|  X\right\|  $ for any (power)
dilation $X$ of $T$. Thus, for the existence of an isometric/co-isometric/unitary 
power dilation of $T$, $T$ must be a contraction.  A basic result due
to Sz.-Nagy says that any contraction has a minimal isometric/co-isometric/
unitary power dilation, and it is essentially unique. We proceed to elaborate.
However, in anticipation of this result, we shall use the definite article
`the' when talking of these minimal dilations. Also note that $X$ is the
minimal co-isometric dilation of $T$  if and only if $X^{\ast}$ is the minimal isometric
dilation of $T^{\ast}$. Thus, for most purposes, it suffices to look at the
minimal isometric and unitary dilations.

The following lemma clearly includes the uniqueness (though not existence!) of
the minimal unitary (or isometric, or co-isometric) dilation. We shall need the
 full strength of this lemma in the next section.

\begin{lemma}
\label{lifting} For $i=1,2$, let $T_{i}:\mathcal{H}_{i}\longrightarrow
\mathcal{H}_{i}$ be Hilbert space contractions with corresponding minimal
isometric/co-isometric/unitary power dilations  $\widehat{T}_{i}:\widehat{\mathcal{H}}_{i}\longrightarrow\widehat{\mathcal{H}}_{i}$. Let
$U:\mathcal{H}_{1}\longrightarrow\mathcal{H}_{2}$ be a unitary such that
$UT_{1}=T_{2}U$. Then there is a unique unitary $\widehat{U}:\widehat
{\mathcal{H}}_{1}\longrightarrow\widehat{\mathcal{H}}_{2}$ such that
$\widehat{U}|_{\mathcal{H}_{1}}=U$ and $\widehat{U}\widehat{T}_{1}%
=\widehat{T}_{2}\widehat{U}$.
\end{lemma}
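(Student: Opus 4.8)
The plan is to build $\widehat{U}$ directly on a convenient total subset of $\widehat{\mathcal H}_1$ and to read off both existence and uniqueness from the structure of that subset. I would treat the isometric and unitary cases together, writing $S=\{0,1,2,\dots\}$ in the isometric case and $S=\Z$ in the unitary case; the co-isometric case I would dispose of at the very end by passing to adjoints, using the observation already recorded above that $\widehat{T}_i$ is the minimal co-isometric dilation of $T_i$ if and only if $\widehat{T}_i^{\,*}$ is the minimal isometric dilation of $T_i^{\,*}$, together with the fact that $UT_1 = T_2 U$ forces $U T_1^{*} = T_2^{*} U$. Minimality of the dilation means precisely that $\widehat{\mathcal H}_i = \bigvee_{n \in S} \widehat{T}_i^{\,n}\mathcal H_i$, so the finite sums $\sum_{n} \widehat{T}_i^{\,n} h_n$ with $h_n \in \mathcal H_i$ form a dense linear subspace $\mathcal D_i$ of $\widehat{\mathcal H}_i$.

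First I would define $\widehat U$ on $\mathcal D_1$ by the obvious formula $\widehat U\big(\sum_n \widehat T_1^{\,n} h_n\big) = \sum_n \widehat T_2^{\,n} U h_n$, and everything hinges on showing that this prescription is well defined and inner-product preserving. The key point is that the Gram data of the spanning vectors are governed entirely by $T_i$ and the inner product of $\mathcal H_i$. Indeed, since $\widehat T_1$ is an isometry (resp. unitary), $\langle \widehat T_1^{\,n} h, \widehat T_1^{\,m} g\rangle = \langle \widehat T_1^{\,n-m} h, g\rangle$, and the power-compression identity $P_{\mathcal H_1}\widehat T_1^{\,k}|_{\mathcal H_1} = T_1^{\,k}$, valid for $k \ge 0$, lets me rewrite this, for $h,g \in \mathcal H_1$, as $\langle T_1^{\,n-m} h, g\rangle$ when $n \ge m$ and as $\langle h, T_1^{\,m-n} g\rangle$ when $n < m$, the latter obtained by conjugating and using unitarity of $\widehat T_1$ in the dilation case. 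Applying $U$ and using $U T_1 = T_2 U$ together with the unitarity of $U$, the identical computation in $\widehat{\mathcal H}_2$ produces exactly the same scalars. Hence $\langle \widehat U x, \widehat U y\rangle = \langle x, y\rangle$ for all $x, y \in \mathcal D_1$; in particular the right-hand side of the defining formula does not depend on the chosen representation of the left-hand side, so $\widehat U$ is a well-defined isometry on $\mathcal D_1$.

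Second, I would extend $\widehat U$ by continuity to an isometry $\widehat{\mathcal H}_1 \to \widehat{\mathcal H}_2$; being an isometry it has closed range, and that range contains the dense subspace $\mathcal D_2$, so $\widehat U$ is onto and therefore unitary. Taking the single term $n = 0$ shows $\widehat U|_{\mathcal H_1} = U$, and comparing $\widehat U \widehat T_1\big(\sum_n \widehat T_1^{\,n} h_n\big) = \sum_n \widehat T_2^{\,n+1} U h_n = \widehat T_2 \widehat U\big(\sum_n \widehat T_1^{\,n} h_n\big)$ on $\mathcal D_1$, and then on all of $\widehat{\mathcal H}_1$ by density, gives the intertwining $\widehat U \widehat T_1 = \widehat T_2 \widehat U$. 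For uniqueness, any unitary $\widehat V$ meeting the two requirements must satisfy $\widehat V \widehat T_1^{\,n} h = \widehat T_2^{\,n} \widehat V h = \widehat T_2^{\,n} U h$ for every $h \in \mathcal H_1$ and $n \in S$, so $\widehat V$ agrees with $\widehat U$ on the total set $\mathcal D_1$ and hence everywhere.

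I expect the only genuinely delicate step to be the inner-product computation underlying well-definedness, and within it the bookkeeping for negative exponents in the unitary case, where one must invoke unitarity of $\widehat T_i$ (not merely the power-compression identity, which only speaks to nonnegative powers) in order to reduce $\langle \widehat T_1^{\,n-m} h, g\rangle$ to a scalar expressed through $T_1$ alone. Everything else — density, the passage to adjoints handling the co-isometric case, and the uniqueness argument — is then purely formal.
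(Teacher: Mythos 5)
Your proof is correct and follows essentially the same route as the paper's: both define $\widehat U$ on the total set $\{\widehat T_1^{\,n}h : n\in S,\ h\in\mathcal H_1\}$ by $\widehat T_1^{\,n}h\mapsto\widehat T_2^{\,n}Uh$, verify inner-product preservation using the power-compression identity together with $UT_1=T_2U$, and dispose of the co-isometric case by passing to adjoints. The only organizational difference is that the paper reaches the unitary case by going through the co-isometric one, whereas you treat the isometric and unitary cases uniformly via the index set $S$; your Gram-matrix verification also spells out a computation that the paper leaves to the reader.
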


\begin{proof} First assume that $\widehat{T}_i$ are isometric dilations.
We begin by proving the uniqueness. To this end, we claim that any unitary
$\widehat{U}$ as in the conclusion of this lemma satisfies, for $n=0,1,2,...$%

\begin{equation}\label{damn}
\widehat{U}(\widehat{T}_{1}^{n}x)=\widehat{T}_{2}^{n}
(Ux)\quad\text{for all }x\in\mathcal{H}_{1}.
\end{equation}
This is easily proved by induction on $n$.
 Notice that this part of the proof does not make use
of the minimality of the dilations.

Now suppose $\widehat{T_{i}}$ are minimal isometric dilations.
Then  the closed linear span of the set
$$
A_i := \{\widehat{T_{i}
}^{n}x:n=0,1,2,...,\,x\in\mathcal{H}_{i}\}\subset\widehat{\mathcal{H}}_{i}
$$
 is invariant under $\widehat{T_{i}}$ and contains
$\mathcal{H}_{i}$. The restriction of $\widehat{T_i}$ to this
invariant subspace is an isometric power dilation of $T_i$.
Therefore, the minimality of the isometric dilation implies that
$A_i$  is a total set in $\mathcal{H}_i$. Since \eqref{damn} gives
the value of $\widehat{U}$ on $A_1$, $\widehat{U}$ is uniquely
determined. To show existence of the unitary $\widehat{U}$, \ we
verify that $\widehat{U}$, defined on the  total set $A_1$ by \eqref{damn}
preserves the inner product and maps $A_1$ onto the total set $A_2$,
it then  extends to a well-defined unitary. 

Next suppose  $\widehat{T_{i}}$ is a minimal co-isometric dilation of
$T_{i}$ for $i=1,2$. Then $\widehat{T_{i}}^{\ast}$ is a minimal isometric
dilation of $\ T_{i}^{\ast}$ and $U$ intertwines $T_{1}^{\ast}$ and
$T_{2}^{\ast}.$ Therefore, by the previous part of this result, there is a
unique unitary $\widehat{U}$ which intertwines $\widehat{T}_{1}^{\ast}$ and
$\widehat{T}_{2}^{\ast}$ and extends $U$. Then $\widehat{U}$ intertwines
$\widehat{T}_{1}$ and  $\widehat{T}_{2}$. This completes the proof in the
case of co-isometric dilations. Notice that, in this case, the equation \eqref{damn} applied to $T_i^*$ shows that $\widehat{U}$ satisfies, for $n=0,1,2, \ldots,$
\begin{equation} \label{damn*}
\widehat{U}(\widehat{T}_1^{*n}x)=\widehat{T}_2^{*n}(Ux)  \quad\text{for all }x\in\mathcal{H}_{1},
\end{equation}
when $\widehat{T}_i$ are co-isometric dilations. Again, minimality of the dilations is not necessary for the validity of this equation.

Finally, let  \ $\widehat{T_{i}}$ \ be a minimal unitary dilation of
$T_{i}$ for $i=1,2$. By the observation above, we now have, for all $n=0, \pm1,\pm2, \ldots,$
\begin{equation} \label{ddamn}
 \widehat{U}(\widehat{T}_1^{*n}x)=\widehat{T}_2^{*n}(Ux)  \quad\text{for all }x\in\mathcal{H}_{1},
 \end{equation}
  Again, minimality of $\widehat{T}_i$ shows that the set $B_i :=
 \{\widehat{T_i}^nx : x \in {\mathcal H_i}, n = 0, \pm 1, \pm 2, \ldots \}$
 is a total set in $\widehat{\mathcal H}_i $ and $\widehat{U}$ is determined 
 on the total set $B_1$ by the equation \eqref{ddamn}. This proves uniqueness. 
 Also, $\widehat{U}$ preserves the inner product and maps $B_1$ 
 onto $B_2$. Therefore it extends to a unitary on $\widehat{\mathcal H}_1$, 
 proving existence.  
\end{proof}

\begin{notation} \label{nota1}
(a) Let $H^2 = H^2(\D)$ be the usual Hilbert space of analytic functions on the open unit disc
  $\D$ with square integrable boundary value (radial limit). Also, $S : H^2 \rightarrow H^2$ be the
     standard (un-weighted) unilateral shift (given by $(Sf)(z)=zf(z)$).

(b) For any contraction $T : \mathcal{H} \rightarrow \mathcal{H}$, let $\mathcal{D}$ (respectively
$\mathcal{D}_{\ast}$) denote the closure of the range of $(I-T^*T)^{1/2}$ (respectively, of 
$(I-TT^*)^{1/2}$). Let  $D :\mathcal{H} \rightarrow \mathcal{D}$ (respectively $D_\ast :
\mathcal{H} \rightarrow \mathcal{D}_\ast $)  be the operator given by $x \mapsto (I-T^*T)^{1/2}x$
(respectively $x \mapsto (I - TT^*)^{1/2}x$).
  Thus, by definition, $D$ and $D_\ast$ are  contractions with  dense range. These are called the 
  defect operators of $T$, and their co-domains $\mathcal{D}$ and $\mathcal{D}_\ast$ are called 
  the defect spaces of $T$.  In the existing literature, it is customary to indicate the dependence
   of these defect operators and spaces on the initial  contraction $T$ by means of a suffix in their 
   names. We have departed from this established practice for typographical and esthetic reasons. 
   We hope this will not cause any confusion and, in each case, the initial contraction will be 
   clear from the context.

(c)  We shall identify the tensor product Hilbert space $ \mathcal{D} \otimes H^2$ with the Hilbert
    space of $\mathcal{D}$-valued analytic functions  on $\D$ with square integrable boundary value
     (via the usual identification of $v \otimes f$ with the function $z \mapsto f(z)v$).   Thus
      $I\otimes S : \mathcal{D} \otimes H^2 \rightarrow \mathcal{D}\otimes H^2$ is the unilateral shift 
      of multiplicity dim($\mathcal{D}$) -- it is formally given by the same formula as $S$, 
      when its domain is viewed as a space of $\mathcal{D}$-valued functions. Likewise,
       $\mathcal{D}_\ast \otimes H^2$ is viewed as a Hilbert space of $\mathcal{D}_\ast$-valued 
       analytic functions and $I \otimes S^*$ is to be viewed as a backward shift (with multiplicity)
        on this space.

   (d)    Finally we let  $\boldsymbol i : \mathcal{D} \rightarrow \mathcal{D} \otimes H^2$
   (respectively $ \boldsymbol i_\ast : \mathcal{D}_\ast \rightarrow \mathcal{D}_\ast \otimes H^2$) 
   be the `inclusion' maps given by the formula $x \mapsto x \otimes \mathbf{1}$. (Here, of course,
    $\mathbf{1}$ is the constant function $1$ in $H^2$.)  
  \end{notation}

The following result is, of course, well known. We include its proof for completeness and to 
ease the development of related ideas.

\begin{theorem} \label{isodil} Every contraction $T$ on a Hilbert space $\mathcal{H}$
has a minimal isometric (or co-isometric) dilation. It is unique upto unitary equivalence 
(via a unitary which leaves the subspace $\mathcal{H}$ invariant
 and restricts  to the identity operator on this subspace).
\end{theorem}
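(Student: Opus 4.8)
The uniqueness assertion requires no fresh work: it is exactly the special case $T_1=T_2=T$, $\mathcal H_1=\mathcal H_2=\mathcal H$, $U=\mathrm{id}_{\mathcal H}$ of Lemma \ref{lifting}, which hands us a unitary $\widehat U$ between any two minimal isometric (or co-isometric) dilations that restricts to the identity on $\mathcal H$ and intertwines them. So the plan is to spend all the effort on \emph{existence} in the isometric case, and then read off the co-isometric case from the duality already recorded in the text (namely, $X$ is a minimal co-isometric dilation of $T$ if and only if $X^*$ is a minimal isometric dilation of $T^*$).

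For existence I would write down the Sz.-Nagy--Schäffer construction directly in the language of Notation \ref{nota1}. On $\widehat{\mathcal H}=\mathcal H\oplus(\mathcal D\otimes H^2)$ I set
\[
\widehat T=\begin{pmatrix} T & 0\\ \boldsymbol i\,D & I\otimes S\end{pmatrix},
\]
so that $\widehat T(h,f)=\bigl(Th,\ \boldsymbol i(Dh)+(I\otimes S)f\bigr)$. The crux is the isometry check, and it is the one genuinely load-bearing computation. Since $\boldsymbol i(Dh)=Dh\otimes\mathbf 1$ is a constant function while $(I\otimes S)f$ has vanishing constant term, these two vectors are orthogonal, whence $\|\widehat T(h,f)\|^2=\|Th\|^2+\|Dh\|^2+\|f\|^2$. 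The defect identity $T^*T+D^*D=I$ collapses the first two summands to $\|h\|^2$, giving $\|\widehat T(h,f)\|^2=\|(h,f)\|^2$, so $\widehat T$ is an isometry.

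Next I would verify the power-dilation property and minimality. As $\widehat T$ is block lower-triangular with $T$ in the top-left corner, $\mathcal H$ is invariant under $\widehat T^*$, hence co-invariant for $\widehat T$ and (trivially) semi-invariant; Sarason's lemma then yields the power-dilation property, and indeed the first coordinate of $\widehat T^n(h,0)$ is visibly $T^n h$, so $P_{\mathcal H}\widehat T^n|_{\mathcal H}=T^n$. For minimality I would compute
\[
\widehat T^n(h,0)=\Bigl(T^n h,\ \textstyle\sum_{k=0}^{n-1}(DT^{\,n-1-k}h)\otimes z^k\Bigr)
\]
and peel off one power of $z$ at a time: the $n=1$ vectors, reduced modulo $\mathcal H$, produce $\mathcal D\otimes\mathbf 1$ (here the dense range of $D$ is essential), the $n=2$ vectors then produce $\mathcal D\otimes z$, and inductively $\mathcal D\otimes z^k$ for every $k$. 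Together with $\mathcal H$ these span $\widehat{\mathcal H}$, so $\bigvee_{n\ge 0}\widehat T^n\mathcal H=\widehat{\mathcal H}$ and the dilation is minimal.

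The co-isometric case then follows by applying the isometric result to $T^*$ and passing to adjoints. I do not expect a serious obstacle, as the theorem is classical; the one point deserving care is the minimality argument, where one must check that the smallest $\widehat T$-\emph{invariant} subspace containing $\mathcal H$ (not merely the smallest reducing one) exhausts $\widehat{\mathcal H}$. This is precisely what the term-by-term extraction of the $z^k$-components delivers, and it is exactly there that the density of the range of $D$ enters.
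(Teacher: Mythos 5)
Your proposal is correct and follows essentially the same route as the paper: uniqueness via Lemma \ref{lifting} with $T_1=T_2=T$ and $U=I$, existence via the Sz.-Nagy--Sch\"affer block operator (the paper writes it upper-triangularly on $(\mathcal D\otimes H^2)\oplus\mathcal H$, you lower-triangularly on $\mathcal H\oplus(\mathcal D\otimes H^2)$, which is the same operator), the identical isometry check from $T^*T+D^*D=I$, the same inductive extraction of $\mathcal D\otimes z^k$ using the dense range of $D$ for minimality, and the same passage to adjoints for the co-isometric case. No gaps.
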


\begin{proof} The uniqueness is immediate from Lemma \ref{lifting} with $T_1 =T_2 = T$, $\mathcal{H}_1 =
\mathcal{H}_2=\mathcal{H}$ and $U=I$. If we prove the existence result in the isometric case 
then the co-isometric case follows by applying this result to the contraction $T^*$. Thus it
 suffices to prove the existence of a minimal isometric dilation for $T$. Let $\tilde{T}$ be the operator
 on the Hilbert space $\mathcal{K} := (\mathcal{D} \otimes H^2)\oplus \mathcal{H}$ given by 
  \begin{equation} \label{isodilform} \tilde{T}  = \left ( \begin{array}{cc} I \otimes S & iD \\ 0 & T
 \end{array} \right ). \end{equation}
 
We claim that $\tilde{T}$ is the minimal isometric dilation of $T$. Since $S^*S=I$ and $D^*D+T^*T= I_{\mathcal{H}}$
  and the range closure of $D$ (viewed as an operator into $\mathcal{D} \otimes H^2$, see Notation  
  \ref{nota1}) equals the kernel of $I \otimes S$, it follows that $\tilde{T}^*\tilde{T}=I_{\mathcal{K}}$. 
 Thus $\tilde{T}$ is an isometry. Since $T$ is a diagonal entry of the upper triangular (block) matrix $\tilde{T}$,
  it follows that $\tilde{T}$ is a (power) dilation of $T$. (Note that $\mathcal{H} = 0 \oplus \mathcal{H}$ 
  is co-invariant under $\tilde{T}$.) To show that $\tilde{T}$ is minimal, let $\mathcal{K}_0$ be the closed linear 
  span in $\mathcal{K}$ of the set $\{ \tilde{T}^nx : x \in \mathcal{H}, \; n=0,1,2, \ldots \}$. We need to 
  show that $\mathcal{K}_0=\mathcal{K}$. Let $\{ e_n : n=0,1,2,\ldots \}$ be the standard orthonormal
   basis of $H^2$. (Thus $e_n(z) =z^n, \;\; z \in \D$.) Clearly it suffices to show that
    $v \otimes e_n \in \mathcal{K}_0$ for all $v \in \mathcal{D}$ and all $n \geq 0$. We do this by 
    induction on $n$. Trivially, this is true for $n=0$ since $\mathcal{D} \otimes e_0$ is the range 
    closure of $D$. This starts the induction.
    
    Note that, for non-commuting variables $a,b,c$, we have :
      $$ \left ( \begin{array}{cc} a & b \\ 0 & c \end{array} \right )^n = \left ( \begin{array}{cc}
    a^n & \sum_{h=0}^{n-1} a^hbc^{n-1-h} \\ 0 & c^n \end{array} \right ), $$
        as may be seen by induction on $n$.
    
Therefore,
    $$\tilde{T}^{n+1} \left(\begin{array}{c} 0 \\ x \end{array} \right ) = \begin{pmatrix} \sum_{h=0}^n 
    (I \otimes S^h)DT^{n-h}x \\ T^{n+1}x \end{pmatrix} \in \mathcal{K}_0. $$
        Since, clearly, $\mathcal{K}_0 \supseteq \mathcal{H}$, it follows that
    $$ \sum_{h=0}^n (I \otimes S^h)DT^{n-h}x = \sum_{h=0}^n (DT^{n-h}x) \otimes e_h \in \mathcal{K}_0. $$
    But, by induction hypothesis, $\sum_{h=0}^{n-1} (DT^{n-h}x) \otimes e_h \in \mathcal{K}_0$.
     Subtracting, we get $(Dx)\otimes e_n \in \mathcal{K}_0$ for all $x$. Since $D$ has dense
      range in $\mathcal{D}$, it follows that $v \otimes e_n \in \mathcal{K}_0$ for all $v \in \mathcal{D}$.
       This completes the induction. Thus $\mathcal{D} \otimes H^2 \subseteq \mathcal{K}_0$.
        Since also, $\mathcal{H} \subseteq \mathcal{K}_0$, it follows that $\mathcal{K}_0 = \mathcal{K}$. 
      Thus, the operator $\tilde{T}$ defined by Equation \eqref{isodilform} is indeed the unique 
      minimal isometric dilation of $T$. Since the minimal co-isometric dilation of $T$ is the adjoint
       of the minimal isometric dilation of $T^*$, it follows that the unique minimal co-isometric 
       dilation $\tilde{T}_\ast$
       of $T$ is given by the formula
         \begin{equation} \label{coisodilform} \tilde{T}_\ast = \begin{pmatrix}T & (i_\ast D_\ast)^* \\ 0 & I \otimes S^* \end{pmatrix},
       \end{equation}
       acting on the Hilbert space $\mathcal{H} \oplus (\mathcal{D}_\ast \otimes H^2)$. 
\end{proof}

\begin{theorem} \label{unidil} Every contraction $T$ on a Hilbert space $\mathcal{H}$ has a minimal unitary dilation which is unique upto unitary equivalence. Explicitly, it is the unitary $\widehat{T}$ on the Hilbert space $(\mathcal{D}
\otimes H^2) \oplus \mathcal{H} \oplus (\mathcal{D}_\ast \otimes H^2)$  given by the formula
 $$ \widehat{T} = \begin{pmatrix} I \otimes S & iD & i\,C^*\,i_\ast^* \\ 0 & T & (i_\ast D_\ast)^* \\ 0 & 0 & I \otimes S^* 
 \end{pmatrix}, $$
where $C : \mathcal{D} \rightarrow \mathcal{D}_\ast$ is the operator $x \mapsto -Tx$. \end{theorem}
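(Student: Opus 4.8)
The plan is to deduce uniqueness immediately from Lemma \ref{lifting} and to establish existence by verifying directly that the displayed operator $\widehat{T}$ has the three required properties: it is unitary, it is a power dilation of $T$, and it is minimal. Uniqueness is the easy half: applying Lemma \ref{lifting} with $T_1=T_2=T$, $\mathcal{H}_1=\mathcal{H}_2=\mathcal{H}$ and $U=I$ shows that any two minimal unitary dilations are intertwined by a unitary restricting to the identity on $\mathcal{H}$, exactly as in Theorem \ref{isodil}. So the real content is to check that the specific block operator $\widehat{T}$ works.

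First I would verify that $\widehat{T}$ is unitary by computing $\widehat{T}^{\ast}\widehat{T}$ and $\widehat{T}\widehat{T}^{\ast}$ block by block. Writing $\Delta=(I-T^{\ast}T)^{1/2}$ and $\Delta_{\ast}=(I-TT^{\ast})^{1/2}$, the entries collapse using $i^{\ast}i=I_{\mathcal{D}}$, $i_{\ast}^{\ast}i_{\ast}=I_{\mathcal{D}_{\ast}}$, $(I\otimes S)^{\ast}(I\otimes S)=I$, and $(I\otimes S)(I\otimes S)^{\ast}=I-i_{\ast}i_{\ast}^{\ast}$ (the defect of the shift being the projection onto the constants in $\mathcal{D}_{\ast}\otimes H^{2}$), together with $D^{\ast}D=I-T^{\ast}T$, and the facts that $(I\otimes S)^{\ast}i=0$ and $i_{\ast}^{\ast}(I\otimes S)=0$ because $i$ lands in the constants, which the backward shift annihilates, while $(I\otimes S)$ carries everything off the constants. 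After these reductions every diagonal block equals the identity and every off-diagonal block vanishes precisely when the two standard defect intertwining relations hold: the $(2,3)$ block of $\widehat{T}^{\ast}\widehat{T}$ gives $CD=-D_{\ast}T$, which is $T\Delta=\Delta_{\ast}T$, and the $(3,3)$ block gives $CC^{\ast}+D_{\ast}D_{\ast}^{\ast}=I_{\mathcal{D}_{\ast}}$, which, since $C=-T|_{\mathcal{D}}$ and hence $C^{\ast}=-T^{\ast}|_{\mathcal{D}_{\ast}}$, is exactly $TT^{\ast}+\Delta_{\ast}^{2}=I$ on $\mathcal{D}_{\ast}$; the computation of $\widehat{T}\widehat{T}^{\ast}$ uses the adjoint identities $T^{\ast}\Delta_{\ast}=\Delta T^{\ast}$ and $C^{\ast}C+DD^{\ast}=I_{\mathcal{D}}$. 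This is the heart of the matter, and the point at which the precise choice of the $(1,3)$ entry $iC^{\ast}i_{\ast}^{\ast}$ -- with $C=-T$ -- is forced; everything else is bookkeeping built on the already-proven isometric and co-isometric cases.

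Next I would recognize $\widehat{T}$ as simultaneously containing the minimal isometric and co-isometric dilations as corner blocks, which disposes of the dilation and minimality claims with almost no further work. The subspace $(\mathcal{D}\otimes H^{2})\oplus\mathcal{H}$ is invariant under $\widehat{T}$ (its third row below the diagonal is zero), and the restriction of $\widehat{T}$ to it is precisely the operator $\tilde{T}$ of \eqref{isodilform}; dually, $\mathcal{H}\oplus(\mathcal{D}_{\ast}\otimes H^{2})$ is invariant under $\widehat{T}^{\ast}$, on which $\widehat{T}^{\ast}$ restricts to $\tilde{T}_{\ast}^{\ast}$, the adjoint of the operator in \eqref{coisodilform}. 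Hence $\mathcal{H}$ is the intersection of an invariant and a co-invariant subspace, so it is semi-invariant and, by Sarason's lemma, $\widehat{T}$ is a power dilation of its compression $T$. For minimality I would use that $\widehat{T}^{n}x=\tilde{T}^{n}x$ for $x\in\mathcal{H}$ and $n\geq0$, so by the minimality of $\tilde{T}$ established in Theorem \ref{isodil} the nonnegative powers already span $(\mathcal{D}\otimes H^{2})\oplus\mathcal{H}$; symmetrically, the minimality of the co-isometric dilation shows that the nonpositive powers $\widehat{T}^{-n}x=(\tilde{T}_{\ast}^{\ast})^{n}x$ span $\mathcal{H}\oplus(\mathcal{D}_{\ast}\otimes H^{2})$. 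Together these two spans exhaust $\mathcal{K}$, so $\{\widehat{T}^{n}x:n\in\mathbb{Z},\,x\in\mathcal{H}\}$ is total and $\widehat{T}$ is minimal. The only genuinely delicate step is the unitarity check of the preceding paragraph; the dilation and minimality assertions reduce cleanly to the isometric and co-isometric results already in hand.
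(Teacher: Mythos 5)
Your proposal is correct and follows essentially the same route as the paper: uniqueness via Lemma \ref{lifting}, the corner identification with the minimal isometric and co-isometric dilations of Theorem \ref{isodil}, and the block computation of $\widehat{T}^{\ast}\widehat{T}$ and $\widehat{T}\widehat{T}^{\ast}$ reducing to the defect relations $CD=-D_{\ast}T$, $CC^{\ast}+D_{\ast}D_{\ast}^{\ast}=I_{\mathcal{D}_{\ast}}$ and their adjoints. The only organizational difference is that the paper starts from an assumed minimal unitary dilation and \emph{derives} that the $(1,3)$ entry must be $iC^{\ast}i_{\ast}^{\ast}$ from the equations $(I\otimes S^{\ast})A=0$ and $(iD)^{\ast}A=-(i_{\ast}D_{\ast}T)^{\ast}$ before verifying unitarity, whereas you verify the displayed formula directly and also spell out the minimality and power-dilation checks that the paper leaves implicit in its identification of the corners; your version is logically self-contained for existence, while the paper's explains why the $(1,3)$ entry is forced.
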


(Because of the well known identity $ T(I-T^*T)^{1/2} = (I-TT^*)^{1/2}T$, $T$ maps $\mathcal{D}$ into $\mathcal{D}_\ast$. Thus $C$ indeed maps $\mathcal{D}$ into $\mathcal{D}_\ast$ and satisfies $CD=-D_\ast T$. The operators $D,\: D_\ast$ and spaces $\mathcal{D},\: \mathcal{D}_\ast$ are as in Notation \ref{nota1}.) 

\begin{proof} Again, uniqueness follows from  Lemma \ref{lifting}. To prove existence, let $\widehat{T}$ be a minimal unitary dilation of $T$. Clearly the compressions of $\widehat{T}$ to the subspaces generated by the vectors $\{\widehat{T}^nx \,|\, n \geq 0, x \in \mathcal{H}\}$ and $\{\widehat{T}^nx \,|\, n \leq 0, x \in \mathcal{H} \}$  are the minimal isometric and co-isometric dilations of $T$. Therefore, by the above, these compressions may be identified with the operators $\tilde{T}$ and $\tilde{T}_\ast$ (given by the formulae \eqref{isodilform} and \eqref{coisodilform}) and consequently these two subspaces are identified with $(\mathcal{D} \otimes H^2) \oplus \mathcal{H}$ and $\mathcal{H} \oplus (\mathcal{D}_\ast \otimes H^2)$. Thus, $\widehat{T}$ acts on $(\mathcal{D} \oplus H^2) \oplus \mathcal{H} \oplus (\mathcal{D}_\ast \otimes 
H^2)$ and is given by the formula in the statement of the theorem, except that the $(1,3)$-entry of this block operator has an unknown entry $A : \mathcal{D}_\ast \otimes H^2 \to \mathcal{D} \otimes H^2$. Now, since $\widehat{T}^*\widehat{T}=I$, equating    entries of this `matrix equation', we find that  $A$ must satisfy  (i) $(I \otimes S^*)A=0$  (ii) $(iD)^*A = - (i_\ast D_\ast T)^*$, and (iii) $ A^*A=(I-D_\ast D_\ast^*) \otimes P$. (Here $P=I-SS^* : H^2 \to H^2$ is the orthogonal projection $Pf =f(0)\mathbf{1}$.)
 Now, (i) says that $\rm{ker}(A^*) \supseteq (\mathcal{D} \otimes \mathbf{1})^\perp=\mathcal{D} \otimes \rm{range}(I-P)$ and (ii) says that $A^*(Dx \otimes \mathbf{1})= -(D_\ast Tx) \otimes \mathbf{1}$. This determines $A^*$ on all pure tensors $v \otimes f$ and hence it is determined throughout $\mathcal{D}_\ast \otimes H^2$ : 
 $$ A^*(Dx \otimes f) = A^*(Dx \otimes (I-P)f) + A^*(Dx \otimes Pf) = -(D_\ast Tx) \otimes \mathbf{1}. $$
 Thus $A$ is determined by the requirements (i) and (ii). Since one readily verifies that $iC^*i_\ast ^*$ satisfies (i) and (ii), it follows that we must have $A= iC^*i_\ast^*$. It is now easy to see that this choice of $A$ satisfies (iii) as well, so that $\widehat{T}$ given above is an isometry. Since $\widehat{T}^*$ is obtained from this formula for $\widehat{T}$ by replacing $T$ by $T^*$ (and, consequently, replacing $\mathcal{D}$ by $\mathcal{D}_\ast$ and so on) it follows that $\widehat{T}$ is also a co-isometry. 
 Therefore, $\widehat{T}$ is a unitary.
\end{proof}

\subsection{Characteristic Operators and Characteristic Functions}

We continue with the set-up introduced above. Thus $T$ is a contraction on a Hilbert space $\mathcal{H}$ with defect spaces $\mathcal{D}$, $\mathcal{D}_\ast$, and defect operators $D$, $D_\ast$. The minimal unitary dilation $\widehat{T}$ of $T$ lives on the space $\widehat{\mathcal{H}} := (\mathcal{D} \otimes H^2) \oplus \mathcal{H} \oplus (\mathcal{D}_\ast \otimes H^2)$ and is given explicitly as in Theorem \ref{unidil}.  From this description, one sees that there is a `visible' copy  
$$\mathcal{F} := (\mathcal{D} \otimes H^2) \oplus 0 \oplus 0$$
of $\mathcal{D} \otimes H^2$ inside the dilation space $\widehat{\mathcal{H}}$. It is invariant under the dilation operator $\widehat{T}$, and the restriction of $\widehat{T}$ to this subspace is a copy of the unilateral shift of multiplicity $\rm{dim}(\mathcal{D})$. It turns out that there is also an `invisible' copy $\mathcal{F}_\ast$ of $\mathcal{D}_\ast \otimes H^2$ inside $\widehat{\mathcal{H}}$ which is also invariant under $\widehat{T}$ and such that the restriction of $\widehat{T}$ to $\mathcal{F}_\ast$ is a copy of the unilateral shift of multiplicity $\rm{dim}( \mathcal{D}_\ast)$. (The visible copy of $\mathcal{D}_\ast \otimes H^2$ inside the dilation space is co-variant under $\widehat{T}$.)
Namely, we have :
$$ \mathcal{F}_\ast := \bigoplus_{n=0}^\infty \widehat{T}^n (\mathcal{D}_\ast \otimes \mathbf{1}).$$
Since $\widehat{T}$ is a unitary, it follows that  $\widehat{T}^{*m}(\mathcal D_\ast\otimes 1) \perp \mathcal D_\ast\otimes 1,$ $m > 0$, and therefore the sum  is an orthogonal direct sum, clearly invariant under $\widehat{T}$. 
If we define  $\Psi :\mathcal{D}_\ast \otimes H^2 \to \mathcal{F}_\ast$ by 
$$ \Psi (v \otimes e_n) = \widehat{T}^n (v), \: n=0,1,2, \ldots, \: v \in \mathcal{D}_\ast,$$
where $e_n, \; n=0,1,2, \ldots $ is the standard orthonormal basis of $H^2$ (thus $e_0=\mathbf{1}$ and $e_n = Se_{n-1}$ for $n \geq 1$), then it immediately follows that $\Psi$ is  a unitary which intertwines $I \otimes S$ with $\widehat{T}|_{\mathcal{F}_\ast }$.

Now, the {\em characteristic operator} $\Theta$ of the contraction $T$ is defined to be the `part' of $\widehat{T}^*$ which goes from $\mathcal{F}$ to $\mathcal{F}_\ast$. That is, 
$$ \Theta := \mathbf{j}_\ast^* \:\widehat{T}^*\: \mathbf{j}, $$
where $\mathbf{j} : \mathcal{F} \to \widehat{\mathcal{H}}$ and $\mathbf{j}_\ast : \mathcal{F}_\ast \to \widehat{\mathcal{H}}$ are the respective inclusion maps. 

We use the unitary $\Psi$ to identify $\mathcal{F}_\ast $ with $\mathcal{D}_\ast \otimes H^2$. After this identification, we have $\Theta : \mathcal{D} \otimes H^2 \to \mathcal{D}_\ast \otimes H^2$. Let's calculate the characteristic operator $\Theta$ explicitly.  We need the following formula for the restriction of the projection $\mathbf{j}_\ast^*$ to $\mathcal{H}$ :

\begin{lemma} \label{project} For $x \in \mathcal{H}$, we have $\mathbf{j}_\ast^*(x)= \sum_{n=1}^\infty \widehat{T}^n (D_\ast {T^*}^{n-1}x \otimes \mathbf{1})$.
\end{lemma}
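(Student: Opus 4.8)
The plan is to compute the orthogonal projection onto $\mathcal{F}_\ast$ head on, exploiting the orthogonal decomposition $\mathcal{F}_\ast = \bigoplus_{n=0}^\infty \widehat{T}^n(\mathcal{D}_\ast \otimes \mathbf{1})$ recorded just above the statement. Write $P_0$ for the orthogonal projection of $\widehat{\mathcal{H}}$ onto the $n=0$ summand $\mathcal{D}_\ast \otimes \mathbf{1}$ (which, as a subspace of $\widehat{\mathcal{H}}$, is $0 \oplus 0 \oplus (\mathcal{D}_\ast \otimes \mathbf{1})$, so that $P_0$ merely extracts the coefficient of $e_0 = \mathbf{1}$ in the bottom component). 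Since $\widehat{T}$ is unitary, $\widehat{T}^n$ carries $\mathcal{D}_\ast \otimes \mathbf{1}$ isometrically onto the $n$-th summand, whence the orthogonal projection onto that summand is $\widehat{T}^n P_0 \widehat{T}^{*n}$. As the summands are mutually orthogonal, the projection $\mathbf{j}_\ast^*$ onto $\mathcal{F}_\ast$ is the strongly convergent sum $\sum_{n=0}^\infty \widehat{T}^n P_0 \widehat{T}^{*n}$, and everything reduces to evaluating $P_0 \widehat{T}^{*n} x$ for $x \in \mathcal{H}$, regarded as $0 \oplus x \oplus 0$.

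For this I would first conjugate-transpose the block matrix of $\widehat{T}$ from Theorem \ref{unidil} to obtain
\[
\widehat{T}^* = \begin{pmatrix} I \otimes S^* & 0 & 0 \\ D^* i^* & T^* & 0 \\ i_\ast C i^* & i_\ast D_\ast & I \otimes S \end{pmatrix},
\]
and then establish, by induction on $n \geq 1$, the formula
\[
\widehat{T}^{*n} x = 0 \,\oplus\, (T^*)^n x \,\oplus\, \sum_{k=0}^{n-1} \bigl(D_\ast (T^*)^{\,n-1-k} x\bigr) \otimes e_k .
\]
The base case $n=1$ is the single application $\widehat{T}^* x = 0 \oplus T^* x \oplus (D_\ast x \otimes \mathbf{1})$, read off from the middle column of $\widehat{T}^*$ using $i_\ast D_\ast x = (D_\ast x) \otimes \mathbf{1}$. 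The inductive step uses only three facts visible in the matrix: the first component stays zero, the middle component is acted on by $T^*$, and the bottom component acquires $(D_\ast (T^*)^n x) \otimes \mathbf{1}$ from the middle entry $i_\ast D_\ast$ while its existing content is moved up by $I \otimes S$ (so $e_k \mapsto e_{k+1}$).

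It then remains only to apply $P_0$, which extracts the coefficient of $e_0 = \mathbf{1}$; by the displayed formula this coefficient is $D_\ast (T^*)^{\,n-1} x$ for $n \geq 1$, while $P_0 x = 0$ for $n=0$ since the bottom component of $x$ vanishes. Substituting into $\mathbf{j}_\ast^* = \sum_{n \geq 0} \widehat{T}^n P_0 \widehat{T}^{*n}$ yields exactly $\mathbf{j}_\ast^*(x) = \sum_{n=1}^\infty \widehat{T}^n\bigl(D_\ast {T^*}^{n-1} x \otimes \mathbf{1}\bigr)$, as claimed. The only genuinely delicate points are bookkeeping ones: correctly reading off $\widehat{T}^*$ (in particular that its $(3,2)$ entry is $i_\ast D_\ast$ and its $(3,3)$ entry is the \emph{forward} shift $I \otimes S$), and tracking the index shift $e_k \mapsto e_{k+1}$ through the induction so that the surviving constant term sits on ${T^*}^{n-1}$ rather than ${T^*}^{n}$. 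The convergence of the series needs no separate argument, being built into the orthogonality of the summands.
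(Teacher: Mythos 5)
Your proof is correct. The paper reaches the same formula by a somewhat different route: it writes $\mathbf{j}_\ast^*(x)=\sum_{n\ge 1}\widehat{T}^n(\alpha_n(x)\otimes\mathbf{1})$ with the coefficients $\alpha_n(x)\in\mathcal{D}_\ast$ characterized by an orthogonality requirement, and then verifies by two inner-product computations (using the explicit block form of $\widehat{T}$ together with the identities $D_\ast^*D_\ast=I-TT^*$ and $C^*D_\ast=-DT^*$) that $\alpha_1(x)=D_\ast x$ and $\alpha_{n+1}(x)=\alpha_n(T^*x)$. You instead compute the entire vector $\widehat{T}^{*n}x$ in closed form by induction on the block matrix of $\widehat{T}^*$, organize the projection as the strongly convergent sum $\sum_{n\ge 0}\widehat{T}^nP_0\widehat{T}^{*n}$ of projections onto the mutually orthogonal summands, and read off the $e_0$-coefficient of the bottom component. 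The ingredients are the same (the orthogonal decomposition of $\mathcal{F}_\ast$, unitarity of $\widehat{T}$, induction, and the explicit dilation matrix), but your argument is forward-computational where the paper's is a verification of orthogonality relations; yours produces the stronger by-product
\[
\widehat{T}^{*n}x \;=\; 0\,\oplus\,(T^*)^nx\,\oplus\,\sum_{k=0}^{n-1}\bigl(D_\ast(T^*)^{\,n-1-k}x\bigr)\otimes e_k,
\]
which is essentially the content of the paper's recursion $\alpha_{n+1}(x)=\alpha_n(T^*x)$ unrolled, while the paper's version avoids having to track the whole bottom component and only isolates the single coefficient actually needed. Both correctly handle the vanishing of the $n=0$ term, and your appeal to orthogonality for the convergence of the series is the same as the paper's implicit use of the orthogonal direct sum.
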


\begin{proof} Since $\mathbf{j}_\ast^*$ is the orthogonal projection onto $\mathcal{F}_\ast = \bigoplus_{n=0}^\infty \widehat{T}^n (\mathcal{D}_\ast \otimes \mathbf{1})$ (orthogonal direct sum) and $x \in \mathcal{H}$ is orthogonal to $\mathcal{D} \otimes \mathbf{1}$, it follows that $\mathbf{j}_\ast^*(x) = \sum_{n=1}^\infty \widehat{T}^n (\alpha_n (x) \otimes \mathbf{1}).$ 
Here, the vectors $\alpha_n(x) \in \mathcal{D}_\ast$ are uniquely determined by the requirement $x - \widehat{T}^n (\alpha_n(x) \otimes \mathbf{1}) \perp \widehat{T}^n(\alpha_n(x)\otimes \mathbf{1})$. So, we need to show that $\alpha_n(x)=D_\ast {T^*}^{n-1}x$ for $n \geq 1$. Clearly, it suffices to show that  $\alpha_1(x) = D_\ast x$ and $\alpha_{n+1}(x) = \alpha_n (T^*x)$ for $n \geq 1$.

Now, using the explicit form of $\widehat{T}$ given in Theorem \ref{unidil}, we compute :

\begin{eqnarray*} \lefteqn{\langle x - \widehat{T}(D_\ast x \otimes \mathbf{1}),  \widehat{T}(D_\ast x \otimes \mathbf{1}) \rangle}\hspace{3em}\\  
& = & 
\langle (-C^*D_\ast x \otimes \mathbf{1}) \oplus (I-D_\ast^* D_\ast)x, (C^*D_\ast x \otimes \mathbf{1} \oplus D_\ast^* D_\ast x \rangle \\ & = & -\langle C^*D_\ast x, C^*D_\ast x\rangle + \langle (I-D_\ast^* D_\ast) x, D_\ast^* D_\ast x \rangle \\
& = & - \langle (I-T^* T)^{1/2} T^* x, (I - T^* T)^{1/2}T^* x \rangle + \langle TT^*x, (I-TT^*)x \rangle \\
& = & 0. \end{eqnarray*}

Here, to obtain the penultimate equality, we have used the identities $D_\ast^* D_\ast = I - TT^*$ and $C^* D_\ast = - DT^*$. The last equality is a result of elementary formal manipulations. Thus, we get $\alpha_1 (x) = D_\ast x$. Next, we observe :
\begin{eqnarray*} \lefteqn{\langle x - \widehat{T}^{n+1} ( \alpha_n (T^* x) \otimes \mathbf{1}),  \widehat{T}^{n+1} ( \alpha_n (T^*x ) \otimes \mathbf{1}) \rangle} \phantom{zxcvbnmmnbvcxz}\\ & = & \langle \widehat{T}^*(x) - \widehat{T}^n (\alpha_n (T^* x) \otimes \mathbf{1}), \widehat{T}^n (\alpha_n (T^* x) \otimes \mathbf{1}) \rangle \\ & = & \langle T^*(x) - \widehat{T}^n (\alpha_n (T^* x) \otimes \mathbf{1}), \widehat{T}^n (\alpha_n (T^* x) \otimes \mathbf{1}) \rangle \\ & = & 0. \end{eqnarray*}
 Here, the first equality is because of unitarity of $\widehat{T}$. The second equality holds since, for $x \in \mathcal{H}$, we have 
 $\widehat{T}^*(x) - T^* (x) = D_\ast x \otimes \mathbf{1} \in \mathcal{D}_\ast \otimes \mathbf{1}  \perp \widehat{T}^n (\mathcal{D}_\ast \otimes \mathbf{1})$. The last equality is from the definition of $ \alpha_n (\cdot)$. Thus we get $\alpha_{n+1} (x) = \alpha_n (T^*(x))$.  
\end{proof}

Now we are ready to obtain the formula for the characteristic operator :

\begin{theorem} \label{charform} When its domain and co-domain are viewed as Hilbert spaces of vector-valued analytic functions, the characteristic operator $\Theta : \mathcal{D} \otimes H^2 \to \mathcal{D}_\ast \otimes H^2$ of a contraction $T$ is given by $\Theta (f) = (z \mapsto \theta(z)f(z))$, where $\theta : \D \to \mathcal{B}(\mathcal{D}, \mathcal{D}_\ast)$ is the analytic function defined by :
$$ \theta (z)D = D_\ast (I-zT^*)^{-1}(zI-T), \: z \in \D. $$
 \end{theorem}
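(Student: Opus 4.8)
The plan is to show that, under the stated identifications, $\Theta$ is simply multiplication by an analytic operator-valued function, and then to read off its symbol from the action on the constants. Write $\Theta=\Psi^*\mathbf{j}_\ast^*\widehat{T}^*\mathbf{j}$ (the operator $\mathbf{j}_\ast^*\widehat{T}^*\mathbf{j}$ of the definition, followed by the identification $\Psi^{-1}=\Psi^*$ of $\mathcal{F}_\ast$ with $\mathcal{D}_\ast\otimes H^2$); then $\Theta$ is a contraction, being a compression of the unitary $\widehat{T}^*$. I would prove two facts: (a) $\Theta$ intertwines the two unilateral shifts, $\Theta\,(I\otimes S)=(I\otimes S)\,\Theta$; and (b) on the constant functions, $\Theta(Dx\otimes\mathbf{1})$ is the function $z\mapsto D_\ast(I-zT^*)^{-1}(zI-T)x$. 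Granting these, $\Theta$ is determined on the total set $\{v\otimes e_n : v\in\mathcal{D},\,n\ge 0\}$ by $\Theta(v\otimes e_n)=(I\otimes S)^n\,\Theta(v\otimes\mathbf{1})$, so (a) and (b) force $\Theta$ to be multiplication by the $\theta$ claimed in the statement. Boundedness and analyticity of $\theta$ are then automatic from the contractivity of $\Theta$ and from the convergent power series produced in (b), while density of the range of $D$ in $\mathcal{D}$ determines $\theta(z)$ on all of $\mathcal{D}$.

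For (a) I would use that $\mathcal{F}$ and $\mathcal{F}_\ast$ are $\widehat{T}$-invariant, that $\widehat{T}\mathbf{j}=\mathbf{j}(I\otimes S)$, and that $\widehat{T}$ is unitary. The first relation together with $\widehat{T}^*\widehat{T}=I$ gives at once $\Theta\,(I\otimes S)=\Psi^*\mathbf{j}_\ast^*\mathbf{j}$. On the other side, invariance of $\mathcal{F}_\ast$ yields $\widehat{T}|_{\mathcal{F}_\ast}=\mathbf{j}_\ast^*\widehat{T}\mathbf{j}_\ast$, whence $(I\otimes S)\,\Theta=\Psi^*\mathbf{j}_\ast^*\widehat{T}P_{\mathcal{F}_\ast}\widehat{T}^*\mathbf{j}$. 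The difference of the two expressions involves $\widehat{T}P_{\mathcal{F}_\ast}\widehat{T}^*-I=P_{\widehat{T}\mathcal{F}_\ast}-I$, and the key geometric observation is that $\mathcal{F}_\ast\ominus\widehat{T}\mathcal{F}_\ast$ is exactly the visible copy $\mathcal{D}_\ast\otimes\mathbf{1}$ (immediate from $\mathcal{F}_\ast=\bigoplus_{n\ge0}\widehat{T}^n(\mathcal{D}_\ast\otimes\mathbf{1})$). Thus the difference reduces to $\Psi^*\mathbf{j}_\ast^*\big[(I-P_{\mathcal{F}_\ast})+P_{\mathcal{D}_\ast\otimes\mathbf{1}}\big]\mathbf{j}$, and this vanishes because $\mathbf{j}_\ast^*(I-P_{\mathcal{F}_\ast})=0$ while $P_{\mathcal{D}_\ast\otimes\mathbf{1}}\mathbf{j}=0$, the range $\mathcal{F}$ of $\mathbf{j}$ being orthogonal to the visible $\mathcal{D}_\ast\otimes\mathbf{1}$.

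For (b) I would compute directly. Using the explicit form of $\widehat{T}^*$ read off from Theorem \ref{unidil}, one gets $\widehat{T}^*(Dx\otimes\mathbf{1}\oplus0\oplus0)=0\oplus(I-T^*T)x\oplus(CDx\otimes\mathbf{1})$, where $CDx=-D_\ast Tx$. The last summand already lies in the visible copy $\mathcal{D}_\ast\otimes\mathbf{1}\subseteq\mathcal{F}_\ast$ and so contributes $(-D_\ast Tx)\otimes e_0$ under $\Psi^*$; the middle summand lies in $\mathcal{H}$, and Lemma \ref{project} projects it onto $\mathcal{F}_\ast$ as $\sum_{n\ge1}\big(D_\ast{T^*}^{\,n-1}(I-T^*T)x\big)\otimes e_n$. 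Summing and reading $e_n$ as $z^n$, the resulting series is precisely the Taylor expansion at $0$ of $D_\ast(I-zT^*)^{-1}(zI-T)x$, which is (b).

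The main obstacle is the projection bookkeeping in (a) and (b): correctly identifying $\widehat{T}P_{\mathcal{F}_\ast}\widehat{T}^*$ with $P_{\widehat{T}\mathcal{F}_\ast}$ and isolating the leftover $\mathcal{D}_\ast\otimes\mathbf{1}$, and in (b) keeping the visible-copy term separate from the $\mathcal{H}$-component that must be routed through Lemma \ref{project}. Once these projections are handled, the remaining power-series identification is a routine expansion of $(I-zT^*)^{-1}$.
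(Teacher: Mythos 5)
Your proposal is correct and follows essentially the same route as the paper's proof: compute $\Theta$ on the constants $Dx\otimes\mathbf{1}$ using the explicit form of $\widehat{T}^*$ together with Lemma \ref{project}, verify that the resulting series is the Taylor expansion of $D_\ast(I-zT^*)^{-1}(zI-T)x$, and extend to all of $\mathcal{D}\otimes H^2$ by proving that $\Theta$ intertwines the two unilateral shifts. Your intertwining step is only a mild repackaging of the paper's (you isolate $\mathcal{F}_\ast\ominus\widehat{T}\mathcal{F}_\ast=\mathcal{D}_\ast\otimes\mathbf{1}$ and use orthogonality of $\mathcal{F}$ to the visible copy of $\mathcal{D}_\ast\otimes\mathbf{1}$, where the paper phrases the same facts via the projections $\mathbf{p}$ and $\mathbf{p}_\ast$), so the geometric content is identical.
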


 \begin{proof} A calculation using the explicit form of $\widehat{T}$ from Theorem \ref{unidil} shows that, for $v=Dx \in \mathcal{D}$, $\widehat{T}^*(v \otimes \mathbf{1}) = (D^*v) + (Cv \otimes \mathbf{1})$. Since $D^*v \in \mathcal{H}$ and $(Cv) \otimes \mathbf{1} \in \mathcal{D}_\ast \otimes \mathbf{1} \subseteq \mathcal{F}_\ast$, Lemma \ref{project} implies that 
 
 \begin{eqnarray*} \Theta (Dx \otimes \mathbf{1}) & = & \mathbf{j}_\ast^*( CDx \otimes \mathbf{1} + D^*Dx)\\
 & = & (CDx \otimes \mathbf{1}) + \mathbf{j}_\ast^*( D^*Dx) \\ 
 & = & -D_\ast Tx \otimes 1 + \sum_{n=1}^\infty \widehat{T}^n ( D_\ast {T^*}^{n-1}(I-T^*T)x \otimes \mathbf{1}). \end{eqnarray*}
 
 Therefore, using $\Psi$ to identify the target with $\mathcal{D}_\ast \otimes H^2$, we get :
 \begin{eqnarray*}  \Theta (Dx \otimes \mathbf{1}) & = & ( z \mapsto -D_\ast Tx +\sum_{n=1}^\infty z^n\,  D_\ast {T^*}^{n-1}(I-T^*T)x \,  \\ & = &  \theta (z) Dx, \end{eqnarray*}
  where $\theta (\cdot)$ is as in the statement of this theorem. Thus, the action of $\Theta$ on the subspace $\mathcal{D} \otimes \mathbf{1}$ of $\mathcal{D}$-valued constant functions is as stated. 
  
  We claim that $\Theta$ intertwines the compressions $\mathbf{j}^*\widehat{T} \mathbf{j} = I \otimes S$ of $\widehat{T}$ on $\mathcal{F} = \mathcal{D} \otimes  H^2$ and $\mathbf{j}_\ast^* \widehat{T} \mathbf{j}_\ast \equiv I \otimes S$ on $ \mathcal{F}_\ast \equiv \mathcal{D}_\ast \otimes H^2$ (after identification via $\Psi$). Granting this claim for the moment, we get, for $v \in \mathcal{D}$ and $m \geq 0$,
   \begin{eqnarray*} \Theta(v \otimes e_m) & = & \Theta ((I \otimes S)^m(v \otimes \mathbf{1})) \\ & = & (I\otimes S)^m (\Theta (v \otimes \mathbf{1})) \\ & = & (z \mapsto e_m(z) \theta (z)v) \\ &  = & (z \mapsto \theta (z)(e_m (z)v)). 
   \end{eqnarray*}     
 
   Thus the action of $\Theta$ on the vectors $v \otimes e_m$ is as stated. Since these vectors span $\mathcal{D} \otimes H^2$, this proves the theorem, subject, of course, to verification of the intertwining property of $\Theta$ claimed above.
   
   To verify this claim, let $\mathbf{p},\; \mathbf{p}_\ast : \widehat{\mathcal{H}} \to \widehat{\mathcal{H}} $ be the orthogonal projections onto the subspaces $\widehat{T}^*(\mathcal{D} \otimes \mathbf{1})$ and $\widehat{T}( \mathcal{D}_\ast \otimes \mathbf{1})$ respectively. Since $\widehat{T}$ is a unitary and $\mathbf{j} \mathbf{j}^*$ and $ \mathbf{j}_\ast \mathbf{j}_\ast^*$ are the orthogonal projections onto $\mathcal{F}$ and $\mathcal{F}_\ast$ respectively, it follows that $\widehat{T}^* (\mathbf{j} \mathbf{j}^*) \widehat{T}$ and $ \widehat{T} (\mathbf{j}_\ast \mathbf{j}_\ast^*) \widehat{T}^*$ are the orthogonal projections onto $\widehat{T}^*(\mathcal{F}) = \mathcal{F} \oplus \widehat{T}^*(\mathcal{D} \otimes \mathbf{1})$ and $ \widehat{T}( \mathcal{F}_\ast) = \mathcal{F}_\ast \ominus \widehat{T}( \mathcal{D}_\ast \otimes \mathbf{1})$, respectively. Therefore, $\widehat{T}^* ( \mathbf{j} \mathbf{j}^*) \widehat{T} = \mathbf{j} \mathbf{j}^* + \mathbf{p}$
   and $ \widehat{T} (\mathbf{j}_\ast \mathbf{j}_\ast^*)\widehat{T}^* = \mathbf{j}_\ast \mathbf{j}_\ast^* - \mathbf{p}_\ast$.
   Also note that $\mathcal{F}$ is orthogonal to both $\widehat{T}^* ( \mathcal{D} \otimes \mathbf{1})$ and $ \widehat{T}(\mathcal{D}_\ast \otimes \mathbf{1})$. Therefore, we get $\mathbf{p} \mathbf{j} = 0 =\mathbf{p}_\ast \mathbf{j}$. Hence,
   $$ (\mathbf{j}_\ast^* \widehat{T}^* \mathbf{j})( \mathbf{j}^* \widehat{T} \mathbf{j}) = \mathbf{j}_\ast^* ( \mathbf{j} \mathbf{j}^* + \mathbf{p}) \mathbf{j} = \mathbf{j}_\ast ^* \mathbf{j},$$
   and,
   $$ (\mathbf{j}_\ast^* \widehat{T} \mathbf{j}_\ast)( \mathbf{j}_\ast^* \widehat{T}^* \mathbf{j}) = \mathbf{j}_\ast^* ( \mathbf{j}_\ast \mathbf{j}_\ast^* - \mathbf{p}_\ast)\mathbf{j}= \mathbf{j}_\ast^* \mathbf{j}. $$
    Thus we get :
   $$ \Theta (I \otimes S) = \mathbf{j}_\ast^* \mathbf{j} = (I \otimes S) \Theta. $$
   This proves the claim.  
   \end{proof}

 The analytic function $\theta$ obtained in this theorem is called the {\em characteristic function} of the contraction $T$. Note that, from its definition, the characteristic operator is clearly a contraction: $ \|\Theta \| \leq 1$. In consequence, the characteristic function is a contraction-valued analytic function: $ \| \theta (z) \| \leq 1 \: \forall z \in \D$.  From its explicit formula, it is easy to verify that $\theta$ is pure contraction valued. While $\theta$ clearly determines $ \Theta$ by the formula $ \Theta(f) = ( z \mapsto \theta (z)f(z))$, specialising this formula, we find that, conversely, $\Theta$ determines $\theta$ by : $\theta(z)v = \Theta (v \otimes \mathbf{1})(z) $. Thus, the characteristic function and the characteristic operator encode the same information about the contraction $T$. 
  
  If $T$ is a contraction and $U$ is a unitary, then $T$ and $T \oplus U$ have the same pair of defect operators and defect spaces. It readily follows that if, as above, $\widehat{T}$ is the minimal unitary dilation of $T$, then the minimal unitary dilation of $T \oplus U$ is $\widehat{T} \oplus U$. In consequence, $T$ and $T \oplus U$ have the same characteristic operator and function. Thus, the characteristic function does not see the unitary parts of the contraction. Therefore, in order that the characteristic function may really characterise the contraction, it is necessary to restrict ourselves to the class of {\em completely non-unitary} (cnu) contractions. Recall that a contraction $T$ is said to be cnu if it has no unitary part (direct summand). Every contraction can be written uniquely as the direct sum of a cnu contraction and a unitary. 
  
 Now, let $T$ and $\tilde{T}$ be two contractions. For each of the constructs attached to $T$ in the above, we shall indicate the corresponding construct for $\tilde{T}$ by a tilde. For instance, $\widetilde{\mathcal{D}}$ is the first defect space of $\tilde{T}$ and $\tilde{\theta}$ is the characteristic function of $\tilde{T}$. We shall say that the characteristic operators $\Theta$ and $\tilde{\Theta}$ {\em coincide} (respectively,the characteristic functions $\theta$ and $\tilde{\theta}$ {\em coincide}) if there are unitaries $v : \mathcal{D} \to \widetilde{\mathcal{D}}$ and $v_* : \mathcal{D}_\ast \to \widetilde{\mathcal{D}}_\ast$ such that $(v_* \otimes I)\Theta = \tilde{\Theta} (v \otimes I)$ (respectively, such that $ v_* \theta(z) = \tilde{\theta}(z) v$ for all $z \in \D$). Clearly, the characteristic operators coincide if and only if the characteristic functions coincide (via the same pair of unitaries).

  Let $\mathcal{G}$ and $\mathcal{G}_\ast$ be the reducing subspaces for 
  $\widehat{T}$ generated by $\mathcal{F}$ and $\mathcal{F}_\ast$, respectively. That is, $\mathcal{G} := \bigoplus_{n= - \infty}^\infty \widehat{T}^n (\mathcal{D} \otimes \mathbf{1})$ and $\mathcal{G}_\ast := \bigoplus_{n=-\infty}^\infty \widehat{T}^n (\mathcal{D}_\ast \otimes \mathbf{1})$. We identify $\mathcal{G}$ and $\mathcal{G}_\ast$ with $\mathcal{D} \otimes L^2(\T)$ and $\mathcal{D}_\ast \otimes L^2(\T)$ in the obvious fashion. The inclusion maps $j: \mathcal{F} \to \hat{\mathcal H}$ and $j_\ast: \mathcal{F}_\ast \to \hat{\mathcal H}$ extend naturally to $\mathcal G$ and $\mathcal G_\ast.$ Since the formula for $j_\ast^\ast(x),$ $x\in \mathcal H,$ obtained in  Lemma \ref{project} remains valid for the extended $j_\ast,$ it follows that the characteristic operator $\Theta$ also extends to $\mathcal G.$ Moreover, it is given by the same formula, namely, $\Theta= j_\ast^\ast \hat{T}j.$ In the proof of the following 
  theorem, $j,$ $j_\ast$ and $\Theta$ denote these extensions.

\begin{theorem} \label{Nagy-Foias}
If $T$ and $\tilde{T}$ are unitarily equivalent contractions then their characteristic functions $\theta$ and $\tilde{\theta}$ coincide. Conversely, if $T$ and $\tilde{T}$ are cnu contractions whose characteristic functions coincide, then $T$ and $\tilde{T}$ are unitarily equivalent. 
\end{theorem}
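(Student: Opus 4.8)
The plan is to prove both directions by lifting to the minimal unitary dilations of Theorem \ref{unidil}, where the characteristic operator $\Theta=\mathbf j_\ast^\ast\widehat T^\ast\mathbf j$ is a concrete piece of geometry; throughout, tildes denote the objects attached to $\tilde T$. For the easy direction, let $U\colon\mathcal H\to\widetilde{\mathcal H}$ be unitary with $UT=\tilde TU$. Lemma \ref{lifting} (unitary case) yields a unitary $\widehat U\colon\widehat{\mathcal H}\to\widehat{\widetilde{\mathcal H}}$ extending $U$ and intertwining $\widehat T$ with $\widehat{\tilde T}$. From the block form of Theorem \ref{unidil} the wandering subspaces are intrinsic, $\mathcal D\otimes\mathbf{1}=\overline{P_{\mathcal H^\perp}\widehat T\mathcal H}$ and $\mathcal D_\ast\otimes\mathbf{1}=\overline{P_{\mathcal H^\perp}\widehat T^\ast\mathcal H}$, so $\widehat U$ carries them onto $\widetilde{\mathcal D}\otimes\mathbf{1}$ and $\widetilde{\mathcal D}_\ast\otimes\mathbf{1}$, giving unitaries $v\colon\mathcal D\to\widetilde{\mathcal D}$ and $v_\ast\colon\mathcal D_\ast\to\widetilde{\mathcal D}_\ast$; since $\widehat U$ intertwines the dilations it restricts to $v\otimes I$ on $\mathcal F=\bigoplus_{n\ge0}\widehat T^{\,n}(\mathcal D\otimes\mathbf{1})$ and to $v_\ast\otimes I$ on $\mathcal F_\ast$. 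Feeding this into $\Theta=\mathbf j_\ast^\ast\widehat T^\ast\mathbf j$ gives $(v_\ast\otimes I)\Theta=\widetilde\Theta(v\otimes I)$, i.e.\ the characteristic operators, hence functions, coincide. This half uses only contractivity.

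For the converse, assume $\theta$ and $\tilde\theta$ coincide: there are unitaries $v,v_\ast$ of the defect spaces with $(v_\ast\otimes I)\Theta=\widetilde\Theta(v\otimes I)$, a relation that persists for the extensions of $\Theta,\widetilde\Theta$ to $\mathcal G,\widetilde{\mathcal G}$. The one indispensable use of the cnu hypothesis is the identity $\widehat{\mathcal H}=\mathcal G\vee\mathcal G_\ast$. To see it, set $\mathcal R:=\mathcal G^\perp\cap\mathcal G_\ast^\perp$ and write $\mathcal F_\ast^{\circ}:=\mathcal D_\ast\otimes H^2$ for the visible third summand of $\widehat{\mathcal H}$. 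Then $\mathcal F\subseteq\mathcal G$ and $\mathcal F_\ast^{\circ}\subseteq\mathcal G_\ast$, so $\mathcal R\subseteq\widehat{\mathcal H}\ominus(\mathcal F\oplus\mathcal F_\ast^{\circ})=\mathcal H$. As $\mathcal R$ reduces the unitary $\widehat T$, for $x\in\mathcal R$ both $\widehat Tx$ and $\widehat T^\ast x$ lie in $\mathcal R\subseteq\mathcal H$; hence $\mathcal R$ reduces $T$ and $T|_{\mathcal R}=\widehat T|_{\mathcal R}$ is unitary, forcing $\mathcal R=0$ by complete non-unitarity. The same argument gives $\widehat{\widetilde{\mathcal H}}=\widetilde{\mathcal G}\vee\widetilde{\mathcal G}_\ast$.

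Next I would glue the two spectral pictures into a dilation intertwiner. Define $\widehat U$ on the dense subspace $\mathcal G+\mathcal G_\ast$ of $\widehat{\mathcal H}$ by $v\otimes I$ on $\mathcal G\cong\mathcal D\otimes L^2(\T)$ and $v_\ast\otimes I$ on $\mathcal G_\ast\cong\mathcal D_\ast\otimes L^2(\T)$. Well-definedness and isometry reduce to the single identity $\langle(v\otimes I)g,(v_\ast\otimes I)g_\ast\rangle=\langle g,g_\ast\rangle$ for $g\in\mathcal G$, $g_\ast\in\mathcal G_\ast$, and this is exactly what coincidence supplies: the cross Gram operator $j_\ast^\ast j\colon\mathcal G\to\mathcal G_\ast$ equals $\Theta\,\widehat T|_{\mathcal G}$ (by the computation in the proof of Theorem \ref{charform}, $\widehat T|_{\mathcal G}$ being the invertible bilateral shift), so the coincidence relation together with $(v\otimes I)\widehat T|_{\mathcal G}=\widehat{\tilde T}|_{\widetilde{\mathcal G}}(v\otimes I)$ makes the two inner products agree. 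By the previous paragraph $\widehat U$ therefore extends to a unitary $\widehat{\mathcal H}\to\widehat{\widetilde{\mathcal H}}$ intertwining $\widehat T$ and $\widehat{\tilde T}$ and carrying $\mathcal F\to\widetilde{\mathcal F}$, $\mathcal F_\ast\to\widetilde{\mathcal F}_\ast$.

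Finally I would recover $\mathcal H$ and restrict. By the block form, $\widehat{\mathcal H}\ominus\mathcal H=\mathcal F\oplus\mathcal F_\ast^{\circ}$, and the visible copy is itself intrinsic, $\mathcal F_\ast^{\circ}=\bigoplus_{n\ge0}\widehat T^{\,\ast n}W_\ast$ with $W_\ast=\mathcal F_\ast\ominus\widehat T\mathcal F_\ast$. Since $\widehat U$ respects $\widehat T$, $\mathcal F$ and $\mathcal F_\ast$ — hence $W_\ast$ and $\mathcal F_\ast^{\circ}$ — it maps $\mathcal H$ onto $\widetilde{\mathcal H}$; setting $U:=\widehat U|_{\mathcal H}$ and compressing $\widehat U\widehat T=\widehat{\tilde T}\widehat U$ to $\mathcal H$ and $\widetilde{\mathcal H}$ yields $UT=\tilde TU$. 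I expect the gluing step to be the main obstacle: one must see that coincidence of the characteristic operators is precisely the compatibility making $v\otimes I$ and $v_\ast\otimes I$ agree on the overlap of $\mathcal G$ and $\mathcal G_\ast$, and that $\mathcal G\vee\mathcal G_\ast$ exhausts $\widehat{\mathcal H}$ — the lone place where complete non-unitarity is truly needed.
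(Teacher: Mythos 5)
Your proposal is correct and follows essentially the same route as the paper: the converse rests on the identity $\widehat{\mathcal H}=\overline{\mathcal G+\mathcal G_\ast}$ (the one use of complete non-unitarity), the gluing of $v\otimes I$ and $v_\ast\otimes I$ via the consistency condition $\langle (v\otimes I)x,(v_\ast\otimes I)x_\ast\rangle=\langle x,x_\ast\rangle$, which is exactly the paper's computation using $j_\ast^*j=\Theta\,\widehat T|_{\mathcal G}$ and the coincidence of the extended characteristic operators. The only cosmetic difference is in the easy direction, where you invoke Lemma \ref{lifting} and identify the restrictions of $\widehat U$ to the wandering subspaces intrinsically, while the paper writes the dilation intertwiner explicitly as $(u\otimes I)\oplus U\oplus(u_\ast\otimes I)$; both are fine.
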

  
  \begin{proof} First suppose $T$ and $\tilde{T}$ are unitarily equivalent. Say $U : \mathcal{H} \to \widetilde{\mathcal{H}}$ is a unitary such that $UT=\tilde{T}U$. Clearly $U$ restricts to two unitaries $u : \mathcal{D} \to \widetilde{\mathcal{D}}$ and $u_\ast : \mathcal{D}_\ast \to \widetilde{\mathcal{D}}_\ast$. Then it easy to verify that $(u \otimes I) \oplus U \oplus (u_\ast \otimes I)$ intertwines the minimal unitary dilation $\widehat{T}$ and $\widehat{\tilde{T}}$. (This is an instance where the intertwiner between a pair of contractions lifting an intertwiner between the dilations - guaranteed by Lemma \ref{lifting}, can be made explicit.) In consequence, a computation shows that $(u_* \otimes I)\Theta = \tilde{\Theta} (u \otimes I)$. This proves the easy direct part of the theorem. (One needs to be careful here : in the last two sentences, $ u_\ast \otimes I$ refers to two distinct operators, going between different spaces.) This part may also be proved by a direct appeal to the explicit formula (Theorem \ref{charform}) for the characteristic function.   
  
  For the converse, let $T$ and $\tilde{T}$ be cnu contractions such that their characteristic functions $\theta$ and $\tilde{\theta}$ coincide. 
  Since $\mathcal G$ and $\mathcal G_\ast$ are reducing subspaces for $\widehat{T}$, the subspace $\mathcal{M} := (\mathcal{G} + \mathcal{G}_\ast)^\perp$ is a reducing subspace of $\widehat{T}$ contained in $\mathcal{H}$. Therefore, $\mathcal{M}$ is reducing for $T$ and $T|_\mathcal{M} = \widehat{T}|_\mathcal{M}$ is a unitary part of $T$. Since $T$ is cnu, it follows that $\mathcal{M}=\{0\}$. Thus $\overline{\mathcal{G} + \mathcal{G}_\ast} = \widehat{\mathcal{H}}$. Likewise, defining the spaces $\widetilde{\mathcal{G}}$ and $\widetilde{\mathcal{G}}_\ast$ corresponding to the cnu contraction $\tilde{T}$, we get $\overline{\widetilde{\mathcal{G}} + \widetilde{\mathcal{G}}_\ast}= \widehat{\widetilde{\mathcal{H}}}$.
 
 
 Let $v$ and $v_\ast$ be as in the definition of coincidence. 
 With the identification of $\mathcal G$ and $\mathcal G_\ast$ with $\mathcal{D} \otimes L^2(\T)$ and $\mathcal{D}_\ast \otimes L^2(\T)$, respectively, we have the unitaries $v \otimes I : \mathcal{G} \to \widetilde{\mathcal{G}}$ and $v_* \otimes I : \mathcal{G}_\ast \to \widetilde{\mathcal{G}}_\ast$. In view of the preceding paragraph, there is at most one isometry $U : \widehat{\mathcal{H}} \to \widehat{{\widetilde{\mathcal{H}}}}$ which restricts to $(v \otimes I)|_\mathcal{G}$ and $(v_* \otimes I)|_{\mathcal{G}_\ast}$ on $\mathcal{G}$ and $\mathcal{G}_\ast$ respectively. Further, if it exists, then this isometry $U$ is automatically a unitary. We proceed to verify the obvious consistency requirement for the existence of such a unitary extension. For $x \in \mathcal{G}$ and $x_\ast \in \mathcal{G}_\ast$, we have, 
  \begin{eqnarray*} \langle (v \otimes I)x, (v_* \otimes I)x_\ast \rangle & = & \langle \widehat{\tilde{T}}^* (v \otimes I)\hat{T}x, (v_* \otimes I)x_\ast \rangle\\ 
  &=& \langle \widehat{\tilde{T}}^* (v \otimes I)\hat{T}x, \tilde{\mathbf{j}}_\ast (v_* \otimes I)x_\ast \rangle\\
  &=& \langle \tilde{\mathbf{j}}_\ast^* \widehat{\tilde{T}}^* (v \otimes I)\hat{T}x,  (v_* \otimes I)x_\ast \rangle\\
  &=& \langle  (v_\ast \otimes I) \mathbf{j}_\ast^* x,  (v_\ast \otimes I)x_\ast \rangle\\
  &=& \langle \mathbf{j}_\ast^* x,  x_\ast \rangle\\
  &=& \langle  x,  \mathbf{j}_\ast x_\ast \rangle\\
  &=& \langle x, x_\ast \rangle
  \end{eqnarray*}
Here, we have used $\mathbf{j}$ and $\mathbf{j}_\ast$ to denote the inclusion maps from $\mathcal{G}$ and $ \mathcal{G}_\ast$ into $\widehat{\mathcal{H}}$, and likewise for $\tilde{T}$ (we are running out of notations!). The first equality is obtained because of the intertwining relation $(v_* \otimes I)\widehat{T}^* = \widehat{\tilde{T}}^* (v \otimes I).$ The fourth equality is obtained by applying the intertwining relation $ (v_\ast \otimes I) \Theta = \tilde{\Theta} (v \otimes I)$, with the definitions of $\Theta$ and $\tilde{\Theta}$ substituted.  Thus, for $x \in \mathcal{G}$ and $x_\ast \in \mathcal{G}_\ast$, we have $\langle (v \otimes I) x, (v_\ast \otimes I)x_\ast \rangle = \langle x, x_\ast \rangle$. Hence we have the unitary $U : \widehat{\mathcal{H}} \to \widehat{\widetilde{\mathcal{H}}}$ defined by
  $$ U(x + x_\ast) = (v \otimes I)x + (v_* \otimes I)x_\ast $$
  for $x \in \mathcal{G}$ and $x_\ast \in \mathcal{G}_\ast$. Now, it is easy to verify that this unitary intertwines $\widehat{T}$ with $\widehat{\tilde{T}}$ and maps $\mathcal{H}$ onto $\widetilde{\mathcal{H}}$. Hence its restriction to $\mathcal{H}$ is a unitary intertwining $T$ with $\tilde{T}$. Thus $T$ and $\tilde{T}$ are unitarily equivalent.  
  \end{proof}  
    
\section{M\"{o}bius-equivariance of Sz.-Nagy--Foias Theory}
We begin this section by listing some notations to be used throughout the rest of the paper.
  
\begin{notation} \begin{enumerate} \label{notations}
\item[(a)] Choose and fix a Borel square root function $s: \mathbb T \to \mathbb T$, satisfying $s(1) =1$. That is, for each $\beta \in \mathbb T$, $s(\beta) $ is one of the two square roots of $\beta$.   Define the function $c : \mbox{\Mob} \times \D \to \C$  as follows.  For $\varphi$ in M\"{o}b , $\varphi$ can be written uniquely as $\varphi(z) = \beta \tfrac{z-\alpha}{1-\bar{\alpha}z}$, $z\in \mathbb D$, where $\alpha \in \mathbb D$ and $\beta \in \mathbb T$. Then
$$c(\varphi, z) = s(\beta) \frac{\sqrt{1-|\alpha|^2}}{1-\bar{\alpha}z},\,\,z\in\mathbb D.$$
(Thus, $c$ is a function, fixed throughout this paper, which is Borel in the first argument and analytic in the second argument, such that its point-wise square is $(\varphi,z) \mapsto \varphi^\prime (z)$.) 
Notice that for fixed $\varphi \in \mbox{\Mob}$, $z \mapsto  c(\varphi, z)$ is a non-vanishing analytic function on a neighbourhood of $\overline{\D}$. In consequence, for any contraction $T$, the operator $c(\varphi,T)$ (obtained by plugging $T$ into the second slot of $c$) is a well-defined and invertible bounded linear operator. 
\item[(b)] For $\varphi \in \mbox{M\"{o}b},$ let $\varphi^*\in \mbox{M\"{o}b}$ be defined by $\varphi^*(z) = \overline{\varphi(\bar{z}})$, $z\in \mathbb D$. Thus $\varphi \mapsto \varphi^*$ is the unique outer automorphism of \mbox{M\"{o}b} (modulo inner automorphisms). For any projective unitary representation $\sigma$ of \mbox{M\"{o}b}, let $\sigma^\sharp$ denote the representation given by $\sigma^\sharp(\varphi) = \sigma(\varphi^*)$, $\varphi \in \mbox{M\"{o}b}$. Note that if $m$ is the multiplier of $\sigma$, then the multiplier $m^\#$ of $\sigma^\sharp$ is given by the formula 
$$m^\#(\varphi_1, \varphi_2) := m(\varphi_1^*, \varphi_2^*), \,\varphi_1,\varphi_2 \in \mbox{\rm M\"{o}b}.$$  
 \item[(c)] The holomorphic discrete series representation $D_1^+ : \mbox{\Mob} \rightarrow \mathcal{U}(H^2)$ is defined by :
 $$ D_1^+(\varphi^{-1}) f := c(\varphi, \cdot) \cdot (f \circ \varphi), \;\; f \in H^2,\;\; \varphi \in \mbox{\Mob}. $$
 The anti-holomorphic discrete series representation $D_1^- : \mbox{\Mob} \rightarrow \mathcal{U}(H^2)$ is defined by 
$$D_1^-(\varphi) := m_0(\varphi,\varphi^{-1}) D_1^+(\varphi^*).$$ 
Here  $m_0$ denotes the  multiplier of $D_1^+$. 
\end{enumerate}
\end{notation}
\begin{remark}
\begin{enumerate} 
\item[(a)]  Evaluating both sides of the equation 
$D_1^+(\varphi_1\varphi_2) = m_0(\varphi_1,\varphi_2) D_1^+(\varphi_1) D_1^+(\varphi_2)$ at the constant function $\mathbf 1$, we get 
\begin{equation}\label{pm 1}
m_0(\varphi_1, \varphi_2) = \frac{c(\varphi_2^{-1}\varphi_1^{-1}, z)}{c(\varphi_2^{-1},\varphi_1^{-1}(z))c(\varphi_1^{-1},z)},\,\, \varphi_1, \varphi_2 \in \mbox{\rm M\"{o}b},\,\, z\in \mathbb D.
\end{equation}
By the chain rule for differentiation, the square of the right hand side in this equation is equal to $1$. Hence $m_0$ is $\pm 1$ valued. 
We shall often use this observation in what follows, without further mention.  
\item[(b)] Let $\varphi_1, \varphi_2 \in \mbox{\rm M\"{o}b}$ be given by $\varphi_i(z) = \beta_i\tfrac{ z - \alpha_i}{1-\bar{\alpha}_i z}$, $i=1,2$. Let $\varphi:= \varphi_1 \varphi_2$ be given by $\varphi(z) = \beta \tfrac{z - \alpha}{1-\bar{\alpha}z}$. Thus $\alpha_1, \alpha_2 \in \mathbb D$ and $\beta_1, \beta_2\in \mathbb T$.  Also, $\alpha, \beta$ are explicit functions of $\alpha_i$ and $\beta_i$, $i=1,2$.  Then, using the defining formula for $c$ and the Equation \eqref{pm 1}, we get 
\begin{equation} \label{mformula}
m_0(\varphi_1, \varphi_2) = \frac{s(\bar{\beta})}{s(\bar{\beta}_1) s(\bar{\beta}_2)} \,\, \frac{1+\alpha_1 \bar{\alpha}_2 \bar{\beta}_2}{\big|1+\alpha_1 \bar{\alpha}_2 \bar{\beta}_2 \big |}.
\end{equation}
\item[(c)] Specializing Equation \eqref{mformula}, we see 
that if $\varphi \in  \mbox{\rm M\"{o}b}$ is given by $\varphi(z) = \beta \tfrac{z - \alpha}{1-\bar{\alpha}z}$, then we have 
\begin{equation}\label{mdiag}
m_0(\varphi, \varphi^{-1}) = s(\beta) s(\bar{\beta}) = m_0(\varphi^*, {\varphi^*}^{-1}).
\end{equation}
\item[(d)]  Using Equaion \eqref{mformula}, it is easy to verify that $m_0(\varphi_2^{-1}, \varphi_1^{-1})=m_0(\varphi_1^*, \varphi_2^*)$ for $\varphi_1, \varphi_2$ in M\"{o}b.  This equation, together with Equation \eqref{simple} below shows that the representations $D_1^-$   and $D_1^+$ have the common multiplier $m_0$.  
\end{enumerate}
\end{remark}

In the following, we fix a contraction $T$ and a M\"{o}bius map $\varphi$. For each of the constructs corresponding to the (arbitrary but fixed) contraction $T$ introduced above, the corresponding construct for $\varphi (T),$ which is also a contraction, will be indicated by a $\varphi$ in the superscript. For instance, $D^\varphi$ and $D_\ast^\varphi$ are the defect operators for $\varphi (T)$, and so on. The proof of the following Lemma is a straightforward verification and is omitted.  

\begin{lemma} \label{defecttransf} For any contraction $T$ and M\"{o}bius map $\varphi$, we have the identity 
$$ (D^\varphi)^*(D^\varphi) = (D c(\varphi,T))^* (D c(\varphi, T)), \: (D^\varphi_\ast)^*(D^\varphi_\ast) = (D_\ast c(\varphi, T)^*)^*(D_\ast c(\varphi,T)^*). $$ In consequence, there are unitaries (obviously depending on $\varphi$)  $u : \mathcal{D} \to \mathcal{D}^\varphi$ and $u_\ast : \mathcal{D}_\ast \to \mathcal{D}^\varphi_\ast$ which are uniquely determined by the identity 

$$ D^\varphi = uDc(\varphi,T), \: D^\varphi_\ast = u_\ast D_\ast c(\varphi,T)^    *. $$  
\end{lemma}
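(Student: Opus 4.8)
The plan is to reduce everything to a single scalar two-variable identity and then promote it to operators by a power-series substitution. First I would record what the two displayed identities say once the defect operators are squared out. Since $(D^\varphi)^*D^\varphi=I-\varphi(T)^*\varphi(T)$ and $(Dc(\varphi,T))^*(Dc(\varphi,T))=c(\varphi,T)^*(I-T^*T)\,c(\varphi,T)$, and likewise for the starred versions, the Lemma's two identities are exactly
\[
I-\varphi(T)^*\varphi(T)=c(\varphi,T)^*(I-T^*T)\,c(\varphi,T),\qquad I-\varphi(T)\varphi(T)^*=c(\varphi,T)(I-TT^*)\,c(\varphi,T)^*.
\]
The engine for both is the elementary identity, valid for $w,z$ in a neighbourhood of $\overline{\D}$,
\[
1-\overline{\varphi(w)}\,\varphi(z)=\overline{c(\varphi,w)}\,c(\varphi,z)\,(1-\bar w z),
\]
which I would verify directly from the explicit formulas in Notation \ref{notations}(a): writing $\varphi(z)=\beta\tfrac{z-\alpha}{1-\bar\alpha z}$ one computes $1-\overline{\varphi(w)}\varphi(z)=\tfrac{(1-|\alpha|^2)(1-\bar w z)}{(1-\alpha\bar w)(1-\bar\alpha z)}$, while $\overline{c(\varphi,w)}c(\varphi,z)=\tfrac{1-|\alpha|^2}{(1-\alpha\bar w)(1-\bar\alpha z)}$ (the factor $s(\beta)$ cancels against its conjugate).

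Next I would transfer this to operators. Since $\alpha\in\D$, both $\varphi$ and $c(\varphi,\cdot)$ are holomorphic on a disc of radius $>1$, so writing $\varphi(z)=\sum_n a_n z^n$ and $c(\varphi,z)=\sum_n b_n z^n$ the coefficients are absolutely summable and $\varphi(T)=\sum_n a_n T^n$, $c(\varphi,T)=\sum_n b_n T^n$ converge in operator norm. Regarding $\bar w$ and $z$ as independent variables, the scalar identity is an equality of two functions holomorphic on a bidisc of polyradius $>1$, hence an equality of their double Taylor coefficients. I would then apply the substitution sending each monomial $\bar w^{\,m}z^{\,n}$ to $(T^*)^m T^n$, keeping the anti-holomorphic powers on the \emph{left} and the holomorphic powers on the \emph{right}; this map is linear, well defined by absolute convergence, and sends $\overline{\varphi(w)}\varphi(z)\mapsto\varphi(T)^*\varphi(T)$, $\overline{c(\varphi,w)}c(\varphi,z)\mapsto c(\varphi,T)^*c(\varphi,T)$, and $\overline{c(\varphi,w)}c(\varphi,z)\,\bar w z\mapsto T^*c(\varphi,T)^*c(\varphi,T)T$. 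The scalar identity thus yields $I-\varphi(T)^*\varphi(T)=c(\varphi,T)^*c(\varphi,T)-T^*c(\varphi,T)^*c(\varphi,T)T$, and since $c(\varphi,T)$ is a function of $T$ it commutes with $T$ (hence $c(\varphi,T)^*$ commutes with $T^*$); moving these factors through gives the desired $c(\varphi,T)^*(I-T^*T)c(\varphi,T)$. The second identity is obtained identically, now substituting $z^n$ on the left and $\bar w^{\,m}$ on the right; alternatively it is the first identity applied to the contraction $T^*$ and the map $\varphi^*$, using $\varphi(T)^*=\varphi^*(T^*)$ and $c(\varphi^*,T^*)=\pm\,c(\varphi,T)^*$ (the sign is immaterial, as $c$ enters only in the sandwiched form $c^*Xc$).

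For the final assertion I would use the standard fact that an equality of Gram operators $A^*A=B^*B$ (here $A=D^\varphi$, $B=Dc(\varphi,T)$) forces $\|Ax\|=\|Bx\|$ for all $x$, so $Bx\mapsto Ax$ is a well-defined isometry of $\overline{\operatorname{ran}B}$ onto $\overline{\operatorname{ran}A}$. Because $c(\varphi,T)$ is invertible (Notation \ref{notations}(a)) we have $\operatorname{ran}(Dc(\varphi,T))=\operatorname{ran}D$, dense in $\mathcal D$ by Notation \ref{nota1}(b), while $\overline{\operatorname{ran}D^\varphi}=\mathcal D^\varphi$ by definition; hence the isometry extends to a unitary $u:\mathcal D\to\mathcal D^\varphi$ with $uDc(\varphi,T)=D^\varphi$, unique since it is prescribed on the dense set $\operatorname{ran}(Dc(\varphi,T))$. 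The unitary $u_\ast$ is produced the same way from the second Gram identity.

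The only real subtlety, and the step I would be most careful about, is the non-commutative book-keeping in the substitution: the scalar identity is symmetric under reordering, but for operators one must keep all the $T^*$'s to the left and all the $T$'s to the right, and the passage from $T^*c(\varphi,T)^*c(\varphi,T)T$ to $c(\varphi,T)^*T^*T\,c(\varphi,T)$ relies precisely on $c(\varphi,T)$ being a holomorphic function of $T$. Everything else — the scalar identity, the absolute convergence that legitimizes the substitution, and the Gram argument — is routine.
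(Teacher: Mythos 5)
Your proof is correct. The paper omits the proof of this lemma as ``a straightforward verification,'' and your argument is exactly the intended one: it rests on the scalar identity $c(\varphi,z)\overline{c(\varphi,w)}=\frac{1-\varphi(z)\overline{\varphi(w)}}{1-z\bar w}$, which the paper itself records and uses later (in the proof of Theorem~\ref{covar}), promoted to operators by the hereditary substitution $\bar w^{m}z^{n}\mapsto (T^*)^mT^n$; your care with the operator ordering and the final Gram-operator argument for producing $u$ and $u_\ast$ are both sound.
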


\begin{lemma}\label{phiofThat}
Let $\widehat{T}$ be the minimal unitary dilation of a contraction $T$. Then for any $\varphi$ in M\"{o}b, $\varphi(\widehat{T})$ has the $3\times 3$ block decomposition 
$\varphi(\widehat{T}) = \big (\!\!\big ( A_{i j}^\varphi\big )\!\!\big)_{1\leq i, j \leq 3}$, where $A_{i j}^\varphi = 0$ for $i > j$ and 
\begin{align*}
A_{11}^\varphi = I\otimes \varphi(S),\, A_{22}^\varphi = \varphi(T),\, & A_{33}^\varphi = I \otimes \varphi(S^*),\\
A_{12}^\varphi =  (I \otimes c( \varphi, S)) \mathbf{i}D c (\varphi, T),\, & A_{23}^\varphi =  c(\varphi, T) (\mathbf{i}_\ast D_\ast)^* \big ( I \otimes c(\varphi, S^*) \big ).
\end{align*}
\end{lemma}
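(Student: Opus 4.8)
The plan is to sidestep the contour integral of the holomorphic functional calculus and instead exploit the fact that a M\"obius map is a linear fractional transformation. Writing $\varphi(z) = \beta\tfrac{z-\alpha}{1-\bar\alpha z}$ as in Notation \ref{notations}(a), and using that $\widehat{T}$ is a unitary (so $\sigma(\widehat{T}) \subseteq \mathbb{T}$ and $1/\bar\alpha \notin \sigma(\widehat{T})$ since $|\alpha|<1$), I would first record the operator identity
$$\varphi(\widehat{T}) = \beta(\widehat{T} - \alpha I)(I - \bar\alpha \widehat{T})^{-1}.$$
The same identity holds verbatim with $\widehat{T}$ replaced by any contraction (in particular by $T$, $S$, or $S^*$), since every contraction has spectrum inside $\overline{\D}$ and hence avoids the pole $1/\bar\alpha$. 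This reduces the whole computation to algebra with resolvents of the form $(I - \bar\alpha A)^{-1}$, and it applies uniformly (the case $\alpha=0$, i.e.\ a rotation, requires no separate treatment).

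Next I would use the block-upper-triangular shape of $\widehat{T}$ from Theorem \ref{unidil}. Since $\widehat{T} - \alpha I$ is upper triangular and the inverse of an invertible upper-triangular block operator is again upper triangular, with the inverses of the diagonal blocks on its diagonal, the product $\varphi(\widehat{T})$ is upper triangular; this gives $A_{ij}^\varphi = 0$ for $i>j$ at once. Its diagonal blocks are obtained by applying the same rational expression to the diagonal blocks $I\otimes S$, $T$, $I\otimes S^*$ of $\widehat{T}$, yielding $A_{11}^\varphi = \varphi(I\otimes S) = I\otimes\varphi(S)$, $A_{22}^\varphi = \varphi(T)$, and $A_{33}^\varphi = I\otimes\varphi(S^*)$, where $\varphi(I\otimes S)=I\otimes\varphi(S)$ because $I\otimes(\cdot)$ commutes with the rational expression. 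Setting $G := (I-\bar\alpha\widehat{T})^{-1}$ and solving the equations coming from $(I-\bar\alpha\widehat{T})G = I$ block by block, I would obtain $G_{12} = \bar\alpha G_X B_{12} G_T$ and $G_{23} = \bar\alpha G_T B_{23} G_Y$, where $B_{12} = \mathbf{i}D$ and $B_{23} = (\mathbf{i}_\ast D_\ast)^*$ are the superdiagonal entries of $\widehat{T}$ and $G_X,G_T,G_Y$ are the resolvents of its diagonal blocks.

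The crux is then a one-line algebraic identity: for any contraction $A$ one has $\bar\alpha(A-\alpha)(I-\bar\alpha A)^{-1} + I = (1-|\alpha|^2)(I-\bar\alpha A)^{-1}$, as seen by putting the left side over the common denominator $I-\bar\alpha A$. Substituting $G_{12}$ and $G_{23}$ into the $(1,2)$ and $(2,3)$ blocks of $\beta(\widehat{T}-\alpha I)G$ and applying this identity collapses them to
$$A_{12}^\varphi = \beta(1-|\alpha|^2)(I-\bar\alpha(I\otimes S))^{-1}\,\mathbf{i}D\,(I-\bar\alpha T)^{-1},$$
and the analogous expression for $A_{23}^\varphi$. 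Finally I would match these against the claimed formulae: from $c(\varphi,z) = s(\beta)\tfrac{\sqrt{1-|\alpha|^2}}{1-\bar\alpha z}$ one reads off $c(\varphi,A) = s(\beta)\sqrt{1-|\alpha|^2}(I-\bar\alpha A)^{-1}$, so that multiplying the factors $I\otimes c(\varphi,S)$ and $c(\varphi,T)$ produces exactly the scalar $s(\beta)^2(1-|\alpha|^2)$ together with the two resolvents, and $s(\beta)^2=\beta$ closes the identification. I expect the only genuine bookkeeping hazard to be tracking which slot each resolvent occupies, namely the tensor factor $I\otimes$ for the shift entries versus the bare operator for $T$, and confirming that the scalar $\beta$ is delivered precisely by the square of the square-root branch $s$.
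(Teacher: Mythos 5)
Your proposal is correct and follows essentially the same route as the paper: both exploit the linear-fractional form $\varphi(\widehat{T})=\beta(\widehat{T}-\alpha I)(I-\bar\alpha\widehat{T})^{-1}$, deduce upper-triangularity and the diagonal blocks from the block structure of $\widehat{T}$, and reduce the $(1,2)$ and $(2,3)$ entries to the identity $(I-\bar\alpha A)+\bar\alpha(A-\alpha I)=(1-|\alpha|^2)I$ before matching against $c(\varphi,\cdot)$. The only cosmetic difference is that the paper equates entries of the implicit relation $(I-\bar\alpha\widehat{T})\varphi(\widehat{T})=\beta(\widehat{T}-\alpha I)$ rather than writing out the resolvent $G$ explicitly.
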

(The explicit formula for $A_{13}^\varphi$ is irrelevant for our purpose.) 
\begin{proof}
Since $\widehat{T}$ has a $3\times 3$ upper triangular block decomposition (given by Theorem \ref{unidil}), it is obvious that so has $\varphi(\widehat{T})$, and its diagonal blocks are as above. Next, we wish  to find the $(1,2)$th entry $A_{12}^\varphi$ of $\varphi (\widehat{T})$. Take $\varphi (z) = \beta (z - \alpha)(1- \overline{\alpha}z)^{-1}$ ( $ |\beta| =1, |\alpha| < 1$). We have $(I - \overline{\alpha} \widehat{T}) \varphi (\widehat{T}) = \beta (\widehat{T} - \alpha I)$. Equating the $(1,2)$th entries of this matrix equation, we get 
\begin{eqnarray*} (I \otimes (I- \overline{\alpha}S))A_{12}^\varphi & = & \mathbf{i}D ( \beta I + \overline{\alpha} \varphi (T)) \\
            & = & \beta (1 - |\alpha|^2) \mathbf{i}D (I-\overline{\alpha}T)^{-1}.\end{eqnarray*}
Therefore, we have 
\begin{eqnarray*} A_{12}^\varphi & = & \beta (1- |\alpha|^2)( I \otimes (I - \overline{\alpha} S)^{-1} ) \mathbf{i}D (I - \overline{\alpha}T)^{-1} \\
                     & = &  (I \otimes c( \varphi, S)) \mathbf{i}D c (\varphi, T). \end{eqnarray*}
A similar computation shows that the $(2,3)$th entry  $A_{23}^\varphi$ of $\varphi (\widehat{T})$ is $c(\varphi, T) (\mathbf{i}_\ast D_\ast)^* \big ( I \otimes c(\varphi, S^*) \big )$. 
\end{proof}

Now we have :
\begin{theorem} \label{charoptransf} For any contraction $T$ and M\"{o}bius map $\varphi$, the minimal unitary dilations $\widehat{\varphi (T)}$ and $\widehat{T}$ (of $\varphi (T)$ and $T$) are related by the formula $ \widehat{ \varphi (T)} V =
V \varphi (\widehat{T})$, where the unitary operator $V : \widehat{\mathcal{H}} \to \widehat{\mathcal{H}}^\varphi $, depending on $\varphi$, is given by 
$$ V:=(m_0(\varphi, \varphi^{-1}) u \otimes D_1^+ (\varphi)) \oplus I \oplus ( m_0(\varphi, \varphi^{-1}) u_\ast \otimes D_1^- (\varphi)), $$
where $u$ and $u_\ast$ are the unitaries given in Lemma \ref{defecttransf}.
 \end{theorem}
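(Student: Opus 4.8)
The plan is to verify the asserted identity directly, block by block, after first checking that $V$ is unitary. Each diagonal block of $V$ is a tensor product of unitaries ($u$ and $u_\ast$ are unitary by Lemma \ref{defecttransf}, $D_1^\pm(\varphi)$ are unitary as discrete-series operators, and $m_0(\varphi,\varphi^{-1})$ is a scalar of modulus one) and its middle block is $I$, so $V:\widehat{\mathcal H}\to\widehat{\mathcal H}^\varphi$ is unitary. I would then set $X:=\widehat{\varphi(T)}\,V$ and $Y:=V\,\varphi(\widehat{T})$. Both are unitary, and since $\widehat{\varphi(T)}$ (Theorem \ref{unidil}) and $\varphi(\widehat{T})$ (Lemma \ref{phiofThat}) are block upper-triangular while $V$ is block diagonal, so are $X$ and $Y$. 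The goal is to prove $X=Y$. Because $V$ is diagonal, this decouples into the block equations $\widehat{\varphi(T)}_{ij}\,V_{jj}=V_{ii}\,\varphi(\widehat{T})_{ij}$, which I treat for each $(i,j)$ with $i\le j$. As a by-product this also shows $\varphi(\widehat{T})$ is a minimal unitary dilation of $\varphi(T)$, so that $V$ is the canonical intertwiner provided by Lemma \ref{lifting}.

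For the diagonal blocks, the $(2,2)$ equation is the triviality $\varphi(T)=\varphi(T)$, the $(1,1)$ equation reduces (after cancelling $u$ and $m_0$) to $S\,D_1^+(\varphi)=D_1^+(\varphi)\,\varphi(S)$, and the $(3,3)$ equation to $S^*D_1^-(\varphi)=D_1^-(\varphi)\,\varphi(S^*)$. The first is exactly the statement that $D_1^+$ is the representation associated with the unilateral shift $S$, immediate from the defining formula for $D_1^+$. The second follows from $D_1^-(\varphi)=m_0(\varphi,\varphi^{-1})D_1^+(\varphi^*)$ together with $\varphi(S^*)=(\varphi^*(S))^*$: conjugating the relation for $\varphi^*(S)$ and using that $m_0(\varphi,\varphi^{-1})^2=1$ shows $D_1^-$ is associated with $S^*$.

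The off-diagonal blocks carry the real content and account for the factor $m_0(\varphi,\varphi^{-1})$. For $(1,2)$ I would substitute $D^\varphi=uDc(\varphi,T)$ from Lemma \ref{defecttransf} and the formula $A_{12}^\varphi=(I\otimes c(\varphi,S))\mathbf{i}Dc(\varphi,T)$ from Lemma \ref{phiofThat}; the equality then reduces to the single identity $D_1^+(\varphi)\,c(\varphi,\cdot)=m_0(\varphi,\varphi^{-1})\mathbf{1}$ in $H^2$, which follows by observing that $c(\varphi,\cdot)=D_1^+(\varphi^{-1})\mathbf{1}$ and applying the multiplier relation $D_1^+(\varphi)D_1^+(\varphi^{-1})=m_0(\varphi,\varphi^{-1})I$. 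The $(2,3)$ block is the most delicate step, since it involves the anti-holomorphic $D_1^-$, adjoints, and the operator $c(\varphi,S^*)$; here I would use $D_\ast^\varphi=u_\ast D_\ast c(\varphi,T)^*$ and reduce the equality to the companion identity $c(\varphi,S^*)^*\mathbf{1}=m_0(\varphi,\varphi^{-1})\,D_1^-(\varphi)^*\mathbf{1}$, which unwinds, via \eqref{mdiag} and the bookkeeping of the square-root function $s$, to the elementary fact $s(\beta)\bar{\beta}=\overline{s(\beta)}$.

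This leaves the $(1,3)$ block, and the key observation is that it need not be computed at all, which is why the explicit form of $A_{13}^\varphi$ is irrelevant. Writing $E:=X_{13}-Y_{13}$, the $(1,3)$ and $(2,3)$ blocks of the equations $X^*X=I=Y^*Y$ (valid because $X$ and $Y$ are unitary), after subtraction and using that all other blocks already agree, give $X_{11}^*E=0$ and $X_{12}^*E=0$. Since $X_{11}=m_0(\varphi,\varphi^{-1})\,u\otimes S D_1^+(\varphi)$ has $\ker X_{11}^*=\mathcal{D}^\varphi\otimes\mathbf{1}$ (because $\ker S^*=\mathbb{C}\mathbf{1}$), the first relation forces $\mathrm{ran}(E)\subseteq\mathcal{D}^\varphi\otimes\mathbf{1}$; and since $X_{12}=\mathbf{i}D^\varphi$ with $D^\varphi$ of dense range (so $(D^\varphi)^*$ is injective), the second then forces $E=0$. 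Hence $X=Y$, which is the assertion. I expect the $(2,3)$ computation to be the main obstacle, both because of the anti-holomorphic representation and the adjoints and because it is there that the precise normalising constants must be made to match; the unitarity argument disposing of the $(1,3)$ block is the clean trick that keeps the proof short.
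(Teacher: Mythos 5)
Your proof is correct, but it runs in the opposite direction from the paper's. The paper never verifies the stated formula against a pre-existing identity: it first observes (via Lemma \ref{defecttransf}) that $\varphi(\widehat{T})$ is itself a minimal unitary dilation of $\varphi(T)$, then invokes Lemma \ref{lifting} with $T_1=T_2=\varphi(T)$ and $U=I$ to produce the unique unitary $V$ with $\widehat{\varphi(T)}V=V\varphi(\widehat{T})$ and $V|_{\mathcal H}=I$, so the intertwining relation holds by construction and all the work goes into identifying $V$. Minimality of the isometric and co-isometric dilations forces $V$ to be block diagonal; the $(1,1)$ and $(3,3)$ entries of the intertwining relation, combined with the description of the commutant of $I\otimes S$ and the strong maximum modulus principle (a unitary-valued bounded analytic function is constant), force the outer blocks into the tensor form $w\otimes D_1^+(\varphi)$ and $w_\ast\otimes D_1^-(\varphi)$; and the $(1,2)$ and $(2,3)$ entries then pin down $w=m_0(\varphi,\varphi^{-1})u$ and $w_\ast=m_0(\varphi,\varphi^{-1})u_\ast$ using exactly the two identities you isolate, namely $D_1^+(\varphi)c(\varphi,\cdot)=m_0(\varphi,\varphi^{-1})\mathbf 1$ and its $(2,3)$ companion built from Equation \eqref{mdiag}. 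You instead take the explicit $V$ as given and check the block equations directly, which obliges you to confront the $(1,3)$ block that the paper's construction never touches; your disposal of it --- comparing the $(1,3)$ and $(2,3)$ blocks of $X^*X=I=Y^*Y$, using $\ker X_{11}^*=\mathcal D^\varphi\otimes\mathbf 1$ and the injectivity of $(D^\varphi)^*$ to conclude $E=0$ --- is sound and is the genuinely new ingredient. What the paper's route buys is the uniqueness of $V$ (which it exploits later, e.g.\ in Theorem \ref{unirepdil}) and complete freedom from $A_{13}^\varphi$; what your route buys is a self-contained verification that needs neither the commutant/maximum-modulus argument nor any appeal to Lemma \ref{lifting}.
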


\begin{proof}
The identities in Lemma \ref{defecttransf} clearly imply that a contraction $K$ is an isometry/co-isometry/ unitary if and only if $ \varphi (K)$ is. In consequence, $\varphi (\widehat{T})$ is a minimal unitary dilation of $\varphi (T)$. Therefore, we may apply Lemma \ref{lifting} with $T_1 = \varphi (T) = T_2$ and $U=I$ to get a unique unitary $V : \widehat{\mathcal{H}} \to \widehat{\mathcal{H}}^\varphi$ such that $\widehat{\varphi (T)}V = V \varphi (\widehat{T})$ and $V |_{\mathcal{H}} = I$. Now, $(\mathcal{D} \otimes H^2) \oplus \mathcal{H}$ is the unique subspace of $\widehat{\mathcal{H}}$ on which $\varphi (\widehat{T})$ restricts to a minimal isometric dilation of $\varphi (T)$ Similarly, $(\mathcal{D}^\varphi \otimes H^2) \oplus \mathcal{H}$ is the only subspace of $\widehat{\mathcal{H}}^\varphi$ on which $\widehat{\varphi (T)}$ restricts to a minimal isometric dilation of $\varphi (T)$. Since the unitary $V$ intertwines $\varphi (\widehat{T})$ with $\widehat{\varphi(T)}$, it follows that $V$ maps $(\mathcal{D} \otimes H^2) \oplus \mathcal{H}$ onto $(\mathcal{D}^\varphi \otimes H^2) \oplus \mathcal{H}$. Similarly, $V$ maps  $\mathcal{H} \oplus (\mathcal{D}_\ast \otimes H^2)  $ onto $\mathcal{H} \oplus (\mathcal{D}^\varphi_\ast \otimes H^2)$. Since $V$ is a unitary, it follows that $V$ maps the spaces $\mathcal{D} \otimes H^2$, $\mathcal{H}$ and $ \mathcal{D}_\ast \otimes H^2$ onto the corresponding spaces $\mathcal{D}^\varphi \otimes H^2$, $\mathcal{H}$ and $\mathcal{D}_\ast^\varphi \otimes H^2$. In other words, $V$ is a direct sum, say $V = W \oplus I \oplus W_\ast$, where $W : \mathcal{D} \otimes H^2 \to \mathcal{D}^\varphi \otimes H^2$ and $W_\ast: \mathcal{D}_\ast \otimes H^2 \to \mathcal{D}_\ast^\varphi \otimes H^2$ are unitaries. In the block matrix notation, $V$ is a block diagonal : $ V = \rm{diag}(W, I, W_\ast)$. Note that, also, $\widehat{\varphi(T)}$ has the $3\times 3$ upper triangular form given by Theorem \ref{unidil} (with $\varphi(T)$ in place of $T$)  and $\varphi(\widehat{T})$ has the $3\times 3$ upper triangular form given by Lemma \ref{phiofThat}.  Thus, the intertwining relation $ \widehat{ \varphi (T)} V = V \varphi (\widehat{T})$ may be viewed as an equation involving $3 \times 3$ matrices.  Therefore, equating the $(1,1)$th entries in this intertwining relation, we get 
$$(I \otimes S)W = W \varphi (I \otimes S).$$ 
Since $D_1^+$ is the representation associated with $S$, we also have 
$S D_1^+ (\varphi) = D_1^+ (\varphi) \varphi (S)$ and hence 
$$(I \otimes S)(I \otimes D_1^+ (\varphi)) = (I \otimes D_1^+ (\varphi))(\varphi (I \otimes S)).$$ 
Therefore we deduce that 
the unitary $W(I \otimes D_1^+(\varphi))^* : \mathcal{D} \otimes H^2 \to \mathcal{D}^\varphi \otimes  H^2$ intertwines 
$I \otimes S$ on $ \mathcal{D} \otimes H^2$ with $I \otimes S$ on $ \mathcal{D}^\varphi \otimes H^2$. Note that Lemma \ref{defecttransf} implies that $ \mathcal{D}$ and $\mathcal{D}^\varphi$ are isomorphic Hilbert spaces, so that these two avatars of $I \otimes S$ may be identified. Now, the commutant of $I \otimes S$ is well-known. If the Hilbert space on which $I \otimes S$ lives is identified with a Hilbert space of vector-valued functions, then this commutant consists of (multiplication by) operator-valued bounded analytic functions on the disc. In particular, any unitary commuting with $I \otimes S$ must be given by a unitary-valued analytic function. But, by the strong maximum modulus principle, any unitary-valued analytic function is a constant function. Thus, reverting to the tensor product notation, we see that any unitary commuting with $I \otimes S$ must be of the form $w \otimes I$. Coming back to our particular situation, we conclude that there is a unitary $w : \mathcal{D} \to \mathcal{D}^\varphi$ such that $W(I \otimes D_1^+(\varphi))^* = w \otimes I$. That is, $W = w \otimes D_1^+ (\varphi)$. Similarly, since $D_1^-$ is the representation associated with $S^*$, comparing the $(3,3)$th entry of the intertwining relation, it follows that there is a unitary $w_\ast : \mathcal{D}_\ast \to \mathcal{D}_\ast^\varphi$ such that $W_\ast = w_\ast \otimes D_1^- (\varphi)$. Thus, we have $V = (w \otimes D_1^+ (\varphi)) \oplus I \oplus (w_\ast \otimes D_1^- (\varphi))$. To conclude the proof, it now suffices to show that $w=m_0(\varphi, \varphi^{-1}) u$ and $w_\ast = m_0(\varphi, \varphi^{-1}) u_\ast$. 

Now, equating the (1,2)th entries of the intertwining relation (using the new-found diagonal formula for $V$), we get 
$\mathbf{i}^\varphi D^\varphi = (w \otimes D_1^+(\varphi)c(\varphi, S))\mathbf{i}D c(\varphi, T)$. Evaluating both sides at an arbitrary $x \in \mathcal{H}$, we obtain $D^\varphi (x) \otimes \mathbf{1} = (wD c(\varphi, T)x) \otimes (D_1^+(\varphi)c(\varphi, S)\mathbf{1})$. But a little computation shows that $D_1^+ (\varphi) c(\varphi,S) \mathbf{1} = c(\varphi^{-1}, \cdot)c(\varphi, \varphi^{-1}(\cdot))= m_0(\varphi, \varphi^{-1}) \mathbf{1}$ (by Equation \eqref{pm 1} with $\varphi_1 = \varphi$ and $\varphi_2 = \varphi^{-1}$).  So we get $D^\varphi(x) \otimes \mathbf{1} = (m_0(\varphi,\varphi^{-1}) wD c(\varphi, T)x) \otimes \mathbf{1}$ for all $x \in \mathcal{H}$. Hence,  we have $wDc(\varphi,T) = m_0(\varphi, \varphi^{-1}) D^\varphi = m_0(\varphi,\varphi^{-1}) uD c(\varphi,T)$,   where the last equality comes from the defining equation for $u$ from Lemma \ref{defecttransf}. Since $c(\varphi,T)$ is invertible and $D$ has dense range, this forces $w=m_0(\varphi,\varphi^{-1}) u$. 

Similarly, equating the $(2,3)$th entry in the intertwining relation, we obtain 
$(i_*^\varphi D_*^\varphi)^* (w_*\otimes D_1^-(\varphi)) = c(\varphi, T)(i_*D_*)^* (I \otimes c(\varphi, S^*))$.  Evaluating both sides at $x_*\otimes \mathbf 1$, $x_* \in \mathcal D_*$, we get 
\begin{align*} (D_1^-(\varphi) \mathbf 1) (0) (D_*^\varphi)^* w_* x_* &= (c(\varphi,S^*)\mathbf 1)(0) c(\varphi,T)D_*^* x_* \\
&= (c(\varphi,S^*) \mathbf 1)(0) (D_*^\varphi)^* u_* x_*.
\end{align*}
Here the last equality follows from the defining equation for $u_*$ given in Lemma \ref{defecttransf}.  But $(D_1^-(\varphi)\mathbf 1)(0) = m_0(\varphi, \varphi^{-1}) c({\varphi^*}^{-1}, 0)$ and, since $\mathbf 1 \in \ker S^*$, 
$c(\varphi,S^*)(\mathbf 1)(0)=c(\varphi, 0)$. It is easy to verify that
$c({\varphi^*}^{-1}, 0) =  c(\varphi, 0) \neq 0$. Therefore we get 
$(D_*^\varphi)^* w_* x_* = m_0(\varphi, \varphi^{-1}) (D_*^\varphi)^*u_* x_*$ for all $x_*\in \mathcal D_*$. Thus $(D_*^\varphi)^* w_*= m_0(\varphi, \varphi^{-1}) (D_*^\varphi)^*u_*$. Since $(D_*^\varphi)^*$ has trivial kernel, it follows that $w_* = m_0(\varphi, \varphi^{-1}) u_*$. 
\end{proof}

\begin{theorem} \label{covar} Let $\Theta$ and $\theta$ be the characteristic operator and characteristic function of a contraction $T$. Let $\varphi \in \mbox{\Mob}$. Let $\Theta^\varphi$ and $\theta^\varphi$ be the characteristic operator and characteristic function of the contraction $\varphi (T)$. Then we have : 
 
(a) $\Theta^\varphi =(u_\ast \otimes D_1^+ (\varphi)) \Theta (u \otimes D_1^+ (\varphi))^{-1} $. Hence,  $\Theta^\varphi$ coincides with $(I \otimes D_1^+(\varphi))\Theta(I \otimes D_1^+(\varphi))^{-1}$;

 (b) $\theta^\varphi (z) = u_\ast (\theta \circ\varphi^{-1})(z))u^*$
 for all $z \in \D$. Hence,  $\theta^\varphi$ coincides with $\theta \circ \varphi^{-1}$.  
 
 \end{theorem}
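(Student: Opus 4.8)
The plan is to prove part (b) directly from the explicit formula in Theorem \ref{charform}, and then to read off part (a) by viewing $\Theta$ and $\Theta^\varphi$ as operators of multiplication by $\theta$ and $\theta^\varphi$. I would deliberately avoid routing (a) through the intertwiner $V$ of Theorem \ref{charoptransf}: although $V$ conjugates $\varphi(\widehat{T})$ to $\widehat{\varphi(T)}$, it is block diagonal and carries the \emph{visible} third summand $\mathcal{D}_\ast\otimes H^2$ (not the \emph{invisible} copy $\mathcal{F}_\ast$ that actually enters the definition of $\Theta$) to its counterpart; moreover the codomain twist in $V$ involves $D_1^-$, whereas the clean formula in (a) involves only $D_1^+$. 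So the function-level computation is the more economical route.

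For (b), I would begin with Theorem \ref{charform} applied to $\varphi(T)$, namely $\theta^\varphi(z)D^\varphi = D_\ast^\varphi(I-z\varphi(T)^*)^{-1}(zI-\varphi(T))$, and substitute the transformation laws $D^\varphi = uDc(\varphi,T)$ and $D_\ast^\varphi = u_\ast D_\ast c(\varphi,T)^*$ from Lemma \ref{defecttransf}. Cancelling the unitaries $u,u_\ast$ and recalling $\theta(w)D = D_\ast(I-wT^*)^{-1}(wI-T)$ with $w=\varphi^{-1}(z)$, the assertion $\theta^\varphi(z)=u_\ast\theta(\varphi^{-1}(z))u^*$ reduces, on the dense range of $D$, to the single operator identity
\[
(I-z\varphi(T)^*)^{-1}(zI-\varphi(T)) = (c(\varphi,T)^*)^{-1}(I-wT^*)^{-1}(wI-T)\,c(\varphi,T),\quad w=\varphi^{-1}(z).
\]
To verify it, write $\varphi(z)=\beta\frac{z-\alpha}{1-\bar\alpha z}$, so $\varphi(T)=\beta(T-\alpha I)(I-\bar\alpha T)^{-1}$ and $c(\varphi,T)=s(\beta)\sqrt{1-|\alpha|^2}\,(I-\bar\alpha T)^{-1}$. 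Using the relation $w(z\bar\alpha+\beta)=z+\beta\alpha$ that defines $w=\varphi^{-1}(z)$, a direct rearrangement factors the two ingredients as
\[
zI-\varphi(T)=(z\bar\alpha+\beta)(wI-T)(I-\bar\alpha T)^{-1},\qquad I-z\varphi(T)^*=(1+z\bar\alpha\bar\beta)(I-wT^*)(I-\alpha T^*)^{-1}.
\]
Forming the product and simplifying the scalar $\frac{z\bar\alpha+\beta}{1+z\bar\alpha\bar\beta}=\beta=s(\beta)^2$ shows the left side equals $\beta(I-\alpha T^*)(I-wT^*)^{-1}(wI-T)(I-\bar\alpha T)^{-1}$; substituting $(c(\varphi,T)^*)^{-1}=\frac{1}{\overline{s(\beta)}\sqrt{1-|\alpha|^2}}(I-\alpha T^*)$ and $\overline{s(\beta)}=s(\beta)^{-1}$ into the right side produces the same expression. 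Since both sides are bounded and $D$ has dense range, the identity propagates to all of $\mathcal{D}^\varphi$, giving (b); the ``hence coincides with $\theta\circ\varphi^{-1}$'' clause follows by taking $v=u^*$ and $v_\ast=u_\ast^*$ in the definition of coincidence.

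For (a), I would compute the action of $(u_\ast\otimes D_1^+(\varphi))\Theta(u\otimes D_1^+(\varphi))^{-1}$ on a vector-valued function $g$, using $D_1^+(\varphi)f=c(\varphi^{-1},\cdot)(f\circ\varphi^{-1})$ together with $D_1^+(\varphi)^{-1}=m_0(\varphi,\varphi^{-1})D_1^+(\varphi^{-1})$. The outcome is multiplication by $z\mapsto\kappa(z)\,u_\ast\theta(\varphi^{-1}(z))u^*$, where the accumulated scalar is $\kappa(z)=m_0(\varphi,\varphi^{-1})\,c(\varphi^{-1},z)\,c(\varphi,\varphi^{-1}(z))$. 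By Equation \eqref{pm 1} (with $\varphi_1=\varphi^{-1}$, $\varphi_2=\varphi$, and $c(\mathrm{id},\cdot)\equiv 1$) one gets $c(\varphi^{-1},z)c(\varphi,\varphi^{-1}(z))=m_0(\varphi^{-1},\varphi)$, while the cocycle identity \eqref{mult} forces $m_0(\varphi,\varphi^{-1})=m_0(\varphi^{-1},\varphi)$; since $m_0$ is $\pm1$-valued, $\kappa\equiv m_0(\varphi,\varphi^{-1})^2=1$. Hence the conjugate is multiplication by $u_\ast\theta(\varphi^{-1}(\cdot))u^*$, which by (b) is exactly $\theta^\varphi$, i.e. $\Theta^\varphi$. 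Writing $u\otimes D_1^+(\varphi)=(u\otimes I)(I\otimes D_1^+(\varphi))$ and $u_\ast\otimes D_1^+(\varphi)=(u_\ast\otimes I)(I\otimes D_1^+(\varphi))$ then yields $\Theta^\varphi=(u_\ast\otimes I)\big[(I\otimes D_1^+(\varphi))\Theta(I\otimes D_1^+(\varphi))^{-1}\big](u^*\otimes I)$, which is precisely the ``hence coincides'' clause.

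The main obstacle is not conceptual but bookkeeping: keeping the multiplier factors $m_0(\varphi,\varphi^{-1})$, $m_0(\varphi^{-1},\varphi)$ and the cocycle product $c(\varphi^{-1},z)c(\varphi,\varphi^{-1}(z))$ straight, so that every $\pm1$ and every scalar collapse to give exactly the stray-multiplier-free formulas of (a) and (b). The one genuinely computational ingredient, the operator identity in (b), offers no real resistance once $\varphi$ is written in standard form, since it factors through elementary M\"obius algebra with no analytic input.
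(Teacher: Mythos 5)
Your proposal is correct and follows essentially the same route as the paper: part (b) is obtained from the explicit formula of Theorem \ref{charform} together with Lemma \ref{defecttransf}, and part (a) is then read off by computing the conjugated multiplication operator on functions, with the multiplier factors cancelling via Equation \eqref{pm 1}. The only superficial difference is that you verify the key operator identity by writing $\varphi$ in standard form and doing the M\"obius algebra from scratch, where the paper invokes the two packaged identities $c(\varphi,z)c(\varphi,w)=\tfrac{\varphi(z)-\varphi(w)}{z-w}$ and $c(\varphi,z)\overline{c(\varphi,w)}=\tfrac{1-\varphi(z)\overline{\varphi(w)}}{1-z\bar w}$ with $T$ substituted for $w$.
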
 
(Here $u$ and $u_\ast$ are the unitaries given in Lemma \ref{defecttransf}.)
\begin{proof} Recall that  $\Theta$ (respectively $\Theta^\varphi$) is the operator of multiplication by $\theta$ (respectively $\theta^\varphi$). Therefore, assuming (b) ( $\theta^\varphi = u_\ast (\theta \circ \varphi^{-1})u^*$), we get that, for $f \in \mathcal{D} \otimes H^2$ and $\varphi$ in M\"{o}b,
\begin{eqnarray*} \Theta^\varphi (u \otimes D_1^+(\varphi))f 
& = & \Theta^\varphi (z \mapsto u c(\varphi^{-1}, z)f(\varphi^{-1}z))\\ 
& = & (z \mapsto \theta^\varphi (z) u c(\varphi^{-1}, z)f(\varphi^{-1}z))\\
& = & (z \mapsto u_\ast \theta(\varphi^{-1}z) c (\varphi^{-1}, z) f (\varphi^{-1}z)) \\
& = & u_\ast D_1^+(\varphi)(z \mapsto \theta(z) f(z)) \\
& = & (u_\ast \otimes D_1^+(\varphi)) \Theta (f), \end{eqnarray*}
so that $\Theta^\varphi(u \otimes D_1^+(\varphi))= (u_\ast \otimes D_1^+(\varphi))\Theta$. Thus, (b) implies (a). Reversing this computation, we see that (a) implies (b). So (a) and (b) are equivalent statements. So it suffices to prove (b).

 To  prove (b), notice the easily verifiable (and wellknown) identities 
 \begin{eqnarray*} c(\varphi, z) c (\varphi,w) & = & \frac{\varphi(z) - \varphi (w)}{z-w} \\
c(\varphi,z)\overline{c(\varphi,w)} & = & \frac{1-\varphi (z) \overline{\varphi (w)}}{1-z \bar{w}}. \end{eqnarray*}
Now, in view of the formula for $\theta$ (and the corresponding formula for $\theta^\varphi$) from Theorem \ref{charform}, and the identities from Lemma \ref{defecttransf} defining the unitaries $u$ and $u_\ast$, we have: 
 \begin{eqnarray*} u_\ast \theta(z) u^* D^\varphi & = & u_\ast \theta (z)D c(\varphi,T) \\
 & = & u_\ast D_\ast (I- zT^*)^{-1}(zI -T) c(\varphi,T) \\
 & = & D_\ast^\varphi {c(\varphi,T)^*}^{-1}(I- zT^*)^{-1}(zI-T)c(\varphi,T) \\
 & = & D_\ast^\varphi (I-\varphi(z) \varphi(T)^*)^{-1}(\varphi(z)I - \varphi(T))\\
 & = & \theta^\varphi (\varphi(z)) D^\varphi. \end{eqnarray*}
 (Here, for the penultimate equality, we have used the two identities displayed above, with $T$ in place of $w$.) Since $D^\varphi$ has dense range, we get $u_\ast \theta(z)u^* = \theta^\varphi (\varphi(z))$. Replacing $\varphi(z)$ by $z$, we obtain (b).
\end{proof}

Remark : While the formula in (b) above (as well as its proof) is well known and essentially already contained in Sz-Nagy--Foias \cite{Na-Fo}, the equivalent formula (a), describing the transformation property of the characteristic operator under the M\"{o}bius group,
is new. So is the obviously closely related Theorem \ref{charoptransf} describing the transformation property of the minimal unitary dilation itself under this group. We believe that, although the characteristic function is a computationally useful tool (as we hope to display further in the planned sequel to this paper), the characteristic operator is theoretically more basic. Accordingly, there ought to be a straightforward (and more revealing) proof of part (a) of Theorem \ref{covar}  directly from the dilation theory (perhaps from Theorem \ref{charoptransf}) without having to go through the formula in part (b), as we have been forced to go. Unfortunately we have failed to find this direct proof. An obstacle is that the characteristic operator does not behave nicely with respect to the intertwining unitary $V$ of Theorem \ref{charoptransf}. In particular, $V$ does not take the range of $\Theta$ to that of $\Theta^\varphi$.

\section{Dilating associated representations and the product formula}

\subsection{Dilation of representations}

Recall from Section 1 that an associator is a Hilbert space operator with an associated (projective) unitary representation of the group M\"{o}b. Thus all associators are homogeneous operators, and all irreducible homogeneous operators are associators. The following theorem is contained in \cite{CM}. However, since a projective representation is a Borel function satisfying a transformation property, to complete the proof of this theorem it is necessary to verify that the extension $\hat{\sigma}$ is a Borel function. This subtelity was overlooked in \cite{CM}. 


\begin{theorem} \label{unirepdil} Let $T$ be a contractive associator on the Hilbert space $\mathcal{H}$. Let $\sigma$ be a 
projective representation of \Mob\ associated with  $T$ and let $\widehat{T} : \widehat{\mathcal{H}} \rightarrow \widehat{\mathcal{H}} $
be the minimal unitary dilation of $T$. Then there is a unique projective representation $\hat{\sigma}$ of \Mob\ living on 
$\widehat{\mathcal{H}}$  such that $\hat{\sigma}$ is associated with $\widehat{T}$ and extends $\sigma$ (that is, $\sigma$ occurs as a direct summand of $\hat{\sigma}$). In consequence, $\widehat{T}$ is an associator.
\end{theorem}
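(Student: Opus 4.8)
The plan is to build $\hat\sigma$ one group element at a time with the lifting Lemma \ref{lifting}, and then to check that the resulting pointwise assignment is automatically a projective representation, is Borel, and is forced to be unique.

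First I would note that the association relation $\varphi(T)=\sigma(\varphi)^\ast T\sigma(\varphi)$ says precisely that the unitary $\sigma(\varphi):\mathcal H\to\mathcal H$ intertwines the contraction $\varphi(T)$ with $T$. Since $\varphi(\widehat T)$ is the minimal unitary dilation of $\varphi(T)$ (as shown in the proof of Theorem \ref{charoptransf}) and $\widehat T$ is the minimal unitary dilation of $T$, Lemma \ref{lifting} yields a \emph{unique} unitary $\hat\sigma(\varphi)$ on $\widehat{\mathcal H}$ with $\hat\sigma(\varphi)|_{\mathcal H}=\sigma(\varphi)$ and $\hat\sigma(\varphi)\,\varphi(\widehat T)=\widehat T\,\hat\sigma(\varphi)$. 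The intertwining relation is exactly the assertion that $\hat\sigma$ is associated with $\widehat T$, and since $\hat\sigma(\varphi)$ maps $\mathcal H$ onto $\mathcal H$ (via the unitary $\sigma(\varphi)$), the subspace $\mathcal H$ reduces every $\hat\sigma(\varphi)$; thus $\sigma$ sits inside $\hat\sigma$ as the direct summand $\hat\sigma|_{\mathcal H}$.

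The multiplier identity would then be deduced from this same uniqueness. Using $(\varphi_1\varphi_2)(\widehat T)=\varphi_1(\varphi_2(\widehat T))$ and substituting $\varphi_2(\widehat T)=\hat\sigma(\varphi_2)^\ast\widehat T\hat\sigma(\varphi_2)$ into the (unitarily invariant) functional calculus, one finds that $\hat\sigma(\varphi_1)\hat\sigma(\varphi_2)$ also intertwines $(\varphi_1\varphi_2)(\widehat T)$ with $\widehat T$; on $\mathcal H$ it restricts to $\sigma(\varphi_1)\sigma(\varphi_2)$, which by \eqref{reprep} equals $m(\varphi_1,\varphi_2)^{-1}\sigma(\varphi_1\varphi_2)$, where $m$ is the multiplier of $\sigma$. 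Hence $m(\varphi_1,\varphi_2)\hat\sigma(\varphi_1)\hat\sigma(\varphi_2)$ has both defining properties of $\hat\sigma(\varphi_1\varphi_2)$, and the uniqueness clause of Lemma \ref{lifting} forces $\hat\sigma(\varphi_1\varphi_2)=m(\varphi_1,\varphi_2)\hat\sigma(\varphi_1)\hat\sigma(\varphi_2)$; so $\hat\sigma$ is a projective representation with the same multiplier $m$. The very same argument disposes of uniqueness: any projective representation associated with $\widehat T$ and restricting to $\sigma$ on $\mathcal H$ is, at each $\varphi$, a unitary extending $\sigma(\varphi)$ and intertwining $\varphi(\widehat T)$ with $\widehat T$, hence equal to $\hat\sigma(\varphi)$.

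I expect the Borel measurability of $\varphi\mapsto\hat\sigma(\varphi)$ to be the only real obstacle --- indeed this is the point said to be overlooked in \cite{CM}. The trouble is that the explicit description of $\hat\sigma(\varphi)$ coming from \eqref{ddamn} lives on the total set $\{\varphi(\widehat T)^n x:n\in\Z,\,x\in\mathcal H\}$, which moves with $\varphi$, so fixed matrix coefficients are not obviously measurable. My plan is to move the $\varphi$-dependence off the first slot: the intertwining relation together with $\hat\sigma(\varphi)^{-1}|_{\mathcal H}=\sigma(\varphi)^{-1}$ gives, for fixed $\xi\in\widehat{\mathcal H}$, $m\in\Z$ and $y\in\mathcal H$,
$$\langle\hat\sigma(\varphi)\xi,\ \widehat T^{m}y\rangle=\langle\xi,\ \varphi(\widehat T)^{m}\,\sigma(\varphi)^{-1}y\rangle.$$
Here $\varphi\mapsto\varphi(\widehat T)^{m}$ is norm-continuous (because $\widehat T$ is unitary and $\alpha\mapsto(I-\bar\alpha\widehat T)^{-1}$ is norm-continuous on $\D$) and $\varphi\mapsto\sigma(\varphi)^{-1}y$ is Borel (as $\sigma$ is a projective representation), so the right-hand side is Borel in $\varphi$. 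Since $\{\widehat T^{m}y\}$ is total in $\widehat{\mathcal H}$ and the $\hat\sigma(\varphi)$ are uniformly bounded, a standard approximation (uniform limits of Borel functions are Borel) upgrades this to measurability of all matrix coefficients $\varphi\mapsto\langle\hat\sigma(\varphi)\xi,\zeta\rangle$, giving that $\hat\sigma$ is Borel. Finally, the existence of an associated representation makes $\widehat T$ an associator by definition, completing the proof.
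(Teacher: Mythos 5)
Your construction of $\hat\sigma(\varphi)$ via Lemma \ref{lifting}, the deduction of the multiplier identity $\hat\sigma(\varphi_1\varphi_2)=m(\varphi_1,\varphi_2)\hat\sigma(\varphi_1)\hat\sigma(\varphi_2)$ from the uniqueness clause of that lemma, and the uniqueness of $\hat\sigma$ itself all coincide with the paper's argument. Where you genuinely diverge is on the one point the paper singles out as the subtlety overlooked in \cite{CM}: the Borel measurability of $\varphi\mapsto\hat\sigma(\varphi)$. The paper handles this by observing that the set $\{(\varphi,\widehat U):\widehat U|_{\mathcal H}=\sigma(\varphi),\ \widehat U\varphi(\widehat T)=\widehat T\widehat U\}$ is Borel in $\mbox{M\"{o}b}\times\mathcal U(\widehat{\mathcal H})$ and, by uniqueness, is exactly the graph of $\hat\sigma$; it then invokes the theorem that a map between standard Borel spaces with Borel graph is Borel. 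You instead compute matrix coefficients directly, using $\langle\hat\sigma(\varphi)\xi,\widehat T^{m}y\rangle=\langle\xi,\varphi(\widehat T)^{m}\sigma(\varphi)^{-1}y\rangle$ (valid because $\mathcal H$ reduces the unitary $\hat\sigma(\varphi)$, so $\hat\sigma(\varphi)^*|_{\mathcal H}=\sigma(\varphi)^{-1}$), the norm continuity of $\varphi\mapsto\varphi(\widehat T)^{m}$, the totality of $\{\widehat T^{m}y\}$ guaranteed by minimality, and the uniform bound $\|\hat\sigma(\varphi)\|=1$ to pass to all matrix coefficients. Both arguments are sound. Yours is more hands-on and avoids appealing to the Borel graph theorem from descriptive set theory, at the cost of the explicit continuity and measurability verifications; the paper's is shorter but outsources the work to a nontrivial citation, and it too must implicitly verify that the defining conditions of the graph are Borel, which amounts to much the same continuity checks you carry out. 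The remaining steps (that the intertwining relation is precisely association with $\widehat T$, and that $\mathcal H$ being mapped onto itself makes $\sigma$ a direct summand) are correctly handled.
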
 

\begin{proof} Fix $\varphi \in \mbox{\Mob}$. Notice that for any unitary $U$ and for $\varphi \in \mbox{\Mob}$, $\varphi (U)$ is a unitary. Therefore,  $\varphi (\widehat{T})$ is the minimal  unitary dilation of $\varphi (T)$. Therefore, applying Lemma \ref{lifting} to $T_1 = \varphi (T)$, $T_2 = T$ and $U = \sigma (\varphi)$,
we get a unique unitary $\hat{\sigma}(\varphi)$ on $\widehat{\mathcal{H}}$ extending $\sigma (\varphi)$ such that 

$$ \hat{\sigma}(\varphi) \varphi (\widehat{T}) = \widehat{T} \hat{\sigma}(\varphi). $$

This defines a function $\hat{\sigma} : \mbox{\Mob} \rightarrow \mathcal{U}(\widehat{\mathcal{H}})$. To complete the proof, it suffices to show that it is indeed a projective  representation of M\"{o}b. 

Firstly, since $\sigma : \mbox{\Mob} \rightarrow \mathcal{U}(\mathcal{H})$ is a Borel map, the set $\{ (\varphi, \widehat{U}) : \widehat{U}|_{\mathcal{H}} = \sigma (\varphi),\: \widehat{U} \varphi (\widehat{T})= \widehat{T} \widehat{U}\}$ is clearly a Borel subset of $\mbox{\Mob} \times \mathcal{U}(\widehat{\mathcal{H}})$. But, because of the strong uniqueness of the map $\hat{\sigma}$ discussed above, this set is simply the graph of $\hat{\sigma}$.  Thus, $\hat{\sigma}$ is a map between two standard Borel spaces with a Borel graph. Therefore $\hat{\sigma}$ is a Borel function (cf. Theorem  4.5.2 in \cite{SMS}) .

Let $m$ be the multiplier of the projective representation $\sigma$.  Now, $\hat{\sigma} (\varphi_1) \hat{\sigma}(\varphi_2)$ extends $\sigma (\varphi_1) \sigma (\varphi_2)$ and intertwines $(\varphi_1 \varphi_2)(\widehat{T})$ and $\widehat{T}$. Therefore, $m(\varphi_1, \varphi_2)\hat{\sigma}(\varphi_1) \hat{\sigma}(\varphi_2)$ extends $m( \varphi_1, \varphi_2) \sigma (\varphi_1) \sigma ( \varphi_2) = \sigma (\varphi_1 \varphi_2)$ and intertwines $(\varphi_1 \varphi_2)(\widehat{T})$ with $\widehat{T}$. Since $\hat{\sigma}(\varphi_1 \varphi_2)$ does the same thing, the uniqueness statement in Lemma \ref{lifting} implies that 
 $$ \hat{ \sigma} (\varphi_1 \varphi_2) = m(\varphi_1, \varphi_2) \hat{\sigma} (\varphi_1) \hat{\sigma} (\varphi_2) $$
 for all $\varphi_1, \; \varphi_2 \in \mbox{\Mob}$. This proves that $\hat{\sigma}$ is a projective representation.
 \end{proof} 
  
 Clearly, a similar argument shows that the minimal isometric dilation $\tilde{T}$ of a contractive associator $T$ is again an  associator, and any given (projective) representation $\sigma$ associated with $T$ extends uniquely to a representation $\tilde{\sigma}$ associated with $\tilde{T}$. But we can do even better.  We find explicit formulae for $\tilde{\sigma}$ and $\hat{\sigma}$ in terms of $\sigma$. This is the content of the next few results. What is even more surprising is that 
 these formulae involve the  natural discrete series  (holomorphic and anti-holomorphic) projective representations $D_1^{\pm}$ 
 of \Mob\  living on $H^2$.  (Recall Notation 3.1.)

 \begin{theorem} \label{isorepdil} Let $T$ be a contractive associator on a Hilbert space $\mathcal{H}$ with associated projective representation $\sigma$. Let $\tilde{T}$ be the minimal isometric dilation of $T$ on the Hilbert space $(\mathcal{D} \otimes H^2) \oplus \mathcal{H}$ as given by Formula \eqref{isodilform}. Then the unique projective representation $\tilde{\sigma}$ associated with $\tilde{T}$ and having $\sigma$ as a direct summand is given by  $\tilde{\sigma} =(\pi \otimes D_1^+) \oplus \sigma$, where $\pi : \mbox{\Mob} \rightarrow \mathcal{U}(\mathcal{D})$ is the projective representation of \Mob\ given by the formula 
 $$ \pi(\varphi)D = m_0(\varphi, \varphi^{-1}) D \sigma (\varphi) c(\varphi,T)^{-1}, \;\; \varphi \in \mbox{\Mob}\!\!.$$

\end{theorem}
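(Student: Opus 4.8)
The plan is to write down the candidate operator explicitly, verify by hand that it has the three defining properties of the representation associated with $\tilde T$ and extending $\sigma$, and then invoke the uniqueness already established (the isometric analogue of Theorem \ref{unirepdil}, noted in the remark following it) to identify it with $\tilde\sigma$. Concretely, set
$$
\rho(\varphi) := \big(\pi(\varphi)\otimes D_1^+(\varphi)\big)\oplus\sigma(\varphi)
$$
on $(\mathcal D\otimes H^2)\oplus\mathcal H$, with $\pi(\varphi)$ defined on the dense range of $D$ by the stated formula. I must check (i) that $\pi(\varphi)$ is a well-defined unitary on $\mathcal D$; (ii) that $\rho(\varphi)$ intertwines $\varphi(\tilde T)$ with $\tilde T$; and (iii) that $\rho(\varphi)$ restricts to $\sigma(\varphi)$ on $0\oplus\mathcal H$. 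Point (iii) is immediate from the block-diagonal form of $\rho(\varphi)$.

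For (i), writing $x=c(\varphi,T)y$ the defining formula reads $\pi(\varphi)\big(Dc(\varphi,T)y\big)=m_0(\varphi,\varphi^{-1})\,D\sigma(\varphi)y$, so it suffices to prove $\|Dc(\varphi,T)y\|=\|D\sigma(\varphi)y\|$ for all $y$, together with density of the domain and range. Expanding both sides via $D^*D=I-T^*T$ gives $\|Dc(\varphi,T)y\|^2=\langle c(\varphi,T)^*(I-T^*T)c(\varphi,T)y,y\rangle$ and $\|D\sigma(\varphi)y\|^2=\langle \sigma(\varphi)^*(I-T^*T)\sigma(\varphi)y,y\rangle$. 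The first quantity equals $(D^\varphi)^*D^\varphi=I-\varphi(T)^*\varphi(T)$ by Lemma \ref{defecttransf} and the definition of the defect operator of $\varphi(T)$; the second equals $I-\varphi(T)^*\varphi(T)$ by the associator relation $\varphi(T)=\sigma(\varphi)^*T\sigma(\varphi)$. So the two norms agree. Since $c(\varphi,T)$ and $\sigma(\varphi)$ are onto, both the domain and range of the map are the dense range of $D$, and $\pi(\varphi)$ extends to a unitary.

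For (ii), I would use that $(\mathcal D\otimes H^2)\oplus\mathcal H$ is invariant under $\widehat T$ with $\tilde T$ the corresponding compression, so (exactly as in the proof of Theorem \ref{charoptransf}) $\varphi(\tilde T)$ is the minimal isometric dilation of $\varphi(T)$ and equals the top-left $2\times2$ block $\left(\begin{smallmatrix} I\otimes\varphi(S) & (I\otimes c(\varphi,S))\mathbf i D c(\varphi,T)\\ 0 & \varphi(T)\end{smallmatrix}\right)$ of $\varphi(\widehat T)$ furnished by Lemma \ref{phiofThat}, while $\tilde T$ is given by \eqref{isodilform}. Reading $\rho(\varphi)\varphi(\tilde T)=\tilde T\rho(\varphi)$ entrywise, the $(2,2)$ entry is precisely $\sigma(\varphi)\varphi(T)=T\sigma(\varphi)$, and the $(1,1)$ entry reduces to $SD_1^+(\varphi)=D_1^+(\varphi)\varphi(S)$, the statement that $D_1^+$ is associated with $S$ (used already in Theorem \ref{charoptransf}). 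The crux is the $(1,2)$ entry. Evaluating $(\pi(\varphi)\otimes D_1^+(\varphi))(I\otimes c(\varphi,S))\mathbf i D c(\varphi,T)$ at $x\in\mathcal H$ yields $\big(\pi(\varphi)Dc(\varphi,T)x\big)\otimes\big(D_1^+(\varphi)c(\varphi,S)\mathbf 1\big)$; here $\pi(\varphi)Dc(\varphi,T)x=m_0(\varphi,\varphi^{-1})D\sigma(\varphi)x$ by definition of $\pi$, and $D_1^+(\varphi)c(\varphi,S)\mathbf 1=m_0(\varphi,\varphi^{-1})\mathbf 1$ by the computation in the proof of Theorem \ref{charoptransf}. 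The two factors of $m_0(\varphi,\varphi^{-1})$ multiply to $m_0(\varphi,\varphi^{-1})^2=1$ (recall $m_0$ is $\pm1$-valued), leaving exactly $(D\sigma(\varphi)x)\otimes\mathbf 1=\mathbf i D\sigma(\varphi)x$, which matches the $(1,2)$ entry $\mathbf i D\sigma(\varphi)$ of $\tilde T\rho(\varphi)$. This is the step that forces the otherwise mysterious scalar $m_0(\varphi,\varphi^{-1})$ in the formula for $\pi$, and the main obstacle here is the purely computational bookkeeping of the factors $c$, $D_1^+$ and $m_0$ (plus the verification that $\varphi(\tilde T)$ really is the minimal isometric dilation, so that the lifting lemma can be applied).

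Having verified (i)--(iii), Lemma \ref{lifting} applied with $T_1=\varphi(T)$, $T_2=T$ and $U=\sigma(\varphi)$ (isometric case) shows that there is a \emph{unique} unitary on $(\mathcal D\otimes H^2)\oplus\mathcal H$ extending $\sigma(\varphi)$ and intertwining $\varphi(\tilde T)$ with $\tilde T$; since both $\rho(\varphi)$ and $\tilde\sigma(\varphi)$ are such unitaries, we conclude $\rho=\tilde\sigma$, i.e. $\tilde\sigma=(\pi\otimes D_1^+)\oplus\sigma$. Finally, since $\tilde\sigma$ is a projective representation and $D_1^+$ is one with multiplier $m_0$, comparing $\tilde\sigma(\varphi_1\varphi_2)=\tilde m(\varphi_1,\varphi_2)\tilde\sigma(\varphi_1)\tilde\sigma(\varphi_2)$ on the first summand and cancelling the $D_1^+$ factors gives $\pi(\varphi_1\varphi_2)=\tilde m(\varphi_1,\varphi_2)\,m_0(\varphi_1,\varphi_2)^{-1}\pi(\varphi_1)\pi(\varphi_2)$, a $\mathbb T$-valued cocycle; Borel measurability of $\pi$ follows from that of $\sigma$, $c(\cdot,T)$ and $m_0$ through its defining formula. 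Hence $\pi$ is indeed a projective representation, completing the proof.
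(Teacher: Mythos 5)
Your proof is correct, but it runs in the opposite direction from the paper's. The paper takes the existence and uniqueness of $\tilde{\sigma}$ (the isometric analogue of Theorem \ref{unirepdil}) as the starting point, writes $\tilde{\sigma}=\sigma_0\oplus\sigma$ with $\sigma_0$ living on $\mathcal{D}\otimes H^2$, and then \emph{derives} the tensor form $\sigma_0(\varphi)=\pi(\varphi)\otimes D_1^+(\varphi)$ from the $(1,1)$ entry of the intertwining relation via the commutant-of-the-shift argument (any unitary commuting with $I\otimes S$ is of the form $w\otimes I$, by the maximum modulus principle, exactly as in Theorem \ref{charoptransf}); the formula for $\pi$ is then read off from the $(1,2)$ entry, and the fact that $\pi$ is a projective representation comes for free from the decomposition. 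You instead \emph{posit} the formula, check directly that $\pi(\varphi)$ is a well-defined unitary on the dense range of $D$ (your norm computation via Lemma \ref{defecttransf} and the associator relation is precisely the isometry verification the paper performs separately in Theorem \ref{tworeps}), verify the intertwining relation entry by entry, and then invoke the uniqueness clause of Lemma \ref{lifting} to identify your candidate with $\tilde{\sigma}$. Your route buys a self-contained verification that bypasses the commutant argument entirely and makes the cancellation of the two factors of $m_0(\varphi,\varphi^{-1})$ in the $(1,2)$ entry transparent; the paper's route buys an explanation of why the formula for $\pi$ is forced rather than guessed, and delivers the cocycle identity for $\pi$ without the tensor-cancellation step you need at the end. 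Two small points to tidy: you should record that $\pi(\mathrm{id})=I$ (immediate from the formula), and your appeal to Borel measurability of $\pi$ is best phrased by checking that $\varphi\mapsto\langle\pi(\varphi)Dx,Dy\rangle$ is Borel for $x,y\in\mathcal{H}$ and using density of the range of $D$; neither affects the validity of the argument.
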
 

\begin{proof} Using Equation \eqref{isodilform} and arguing as in the proof of Lemma \ref{phiofThat}, we get 
$$\varphi(\tilde{T}) = \begin{pmatrix} \varphi(I\otimes S) & (I\otimes c(\varphi, S)) \mathbf i D c(\varphi, T)\\ 0 & \varphi(T) \end{pmatrix}.$$

Now, let $\sigma_0$ be the projective representation of \Mob\ living in $\mathcal{D} \otimes H^2$ such that $\tilde{\sigma} = \sigma_0 \oplus \sigma$. Thus, $\sigma_0 (\varphi) \oplus \sigma (\varphi)$ intertwines $\tilde{T}$ and $\varphi (\tilde{T})$.
That is, 
\begin{equation} \label{eqmat} \begin{pmatrix} \sigma_0 (\varphi) & 0 \\ 0 & \sigma (\varphi) \end{pmatrix} \varphi (\tilde{T}) = \tilde{T}  \begin{pmatrix} \sigma_0 (\varphi) & 0 \\ 0 & \sigma (\varphi) \end{pmatrix}. \end{equation}

Substituting the formulae for $\tilde{T}$ and $\varphi(\tilde{T})$ and equating the $(1,1)$th  entry in the resulting matrix equation, and arguing as in the proof of Theorem \ref{charoptransf}, we see that
there is a unique unitary $\pi (\varphi)$ on $\mathcal{D}$ such that $\sigma_0 (\varphi) = \pi (\varphi) \otimes D_1^+(\varphi)$, that is, $\tilde{\sigma}(\varphi) =( \pi ( \varphi) \otimes D_1^+ (\varphi)) \oplus \sigma$. This defines a Borel function $\pi : \mbox{\Mob} \to \mathcal{U}(\mathcal{D})$. Since $D_1^+$, $\sigma$ and $\tilde{\sigma}$ are projective representations of \Mob\ and $\tilde{\sigma} = (\pi \otimes D_1^+) \oplus \sigma$, it  follows
that $\pi$ is a projective representation of M\"{o}b.  It remains to determine the formula for $\pi$.

Substituting in Equation \eqref{eqmat} $\pi (\varphi) \otimes D_1^+(\varphi)$ in place of $\sigma_0(\varphi)$ and equating the 
$(1,2)$th entry and noting that $D_1^+(\varphi)c(\varphi, \cdot) = D_1^+(\varphi)D_1^+(\varphi^{-1})\mathbf 1= m_0(\varphi, \varphi^{-1}) \mathbf 1$, we get 
$\pi(\varphi)D =  m_0(\varphi,\varphi^{-1}) D \sigma (\varphi)c(\varphi,T)^{-1}$. Since $D$ has dense range in $\mathcal{D}$, this formula determines $\pi$ uniquely. \end{proof}


\begin{theorem} \label{coisorepdil} If $\sigma$ is  an associated representation of a contractive associator $T$ then the unique projective representation $\tilde{\sigma}_\ast$ extending $\sigma$ and associated with the minimal co-isometric dilation $\tilde{T}_\ast$ of $T$ is given by $\tilde{\sigma}_\ast = \sigma \oplus (\pi_\ast \otimes D_1^-)$, where $\pi_*:\mbox{\rm M\"{o}b} \to \mathcal B(\mathcal D_*)$ is the projective representation of \Mob\ given by the formula 
$$ \pi_\ast (\varphi)D_\ast = m_0(\varphi, \varphi^{-1}) D_\ast \sigma (\varphi) {c(\varphi, T)^{-1}}^*, \: \varphi \in \mbox{\Mob}.$$
\end{theorem}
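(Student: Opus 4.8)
The plan is to deduce this theorem from its already-established isometric counterpart, Theorem~\ref{isorepdil}, by passing to adjoints. The structural reason is recorded in the proof of Theorem~\ref{isodil}: the minimal co-isometric dilation $\tilde{T}_\ast$ of $T$ is exactly the adjoint $(\widetilde{T^*})^*$ of the minimal isometric dilation $\widetilde{T^*}$ of $T^*$. So the whole argument is a bookkeeping translation of Theorem~\ref{isorepdil} through the operation $\sharp$ of Notation~\ref{notations}(b).

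First I would check that $T^*$ is itself a contractive associator, with associated representation $\sigma^\sharp$. Using the elementary identity $\varphi(T)^* = \varphi^*(T^*)$ (immediate from $\varphi^*(z)=\overline{\varphi(\bar z)}$ and the functional calculus), taking adjoints in $\varphi(T) = \sigma(\varphi)^* T \sigma(\varphi)$ gives $\varphi^*(T^*) = \sigma(\varphi)^* T^* \sigma(\varphi)$; re-indexing $\varphi \mapsto \psi^*$ and using $\varphi^{**}=\varphi$ turns this into $\psi(T^*) = \sigma^\sharp(\psi)^* T^* \sigma^\sharp(\psi)$, so $\sigma^\sharp$ is associated with $T^*$. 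Note that the first defect operator and defect space of $T^*$ are precisely $D_\ast$ and $\mathcal{D}_\ast$.

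Next I would apply Theorem~\ref{isorepdil} to the associator $(T^*, \sigma^\sharp)$. This produces the unique representation $\widetilde{\sigma^\sharp} = (\pi' \otimes D_1^+) \oplus \sigma^\sharp$ associated with $\widetilde{T^*}$, where $\pi'$ lives on $\mathcal{D}_\ast$ and satisfies $\pi'(\psi)D_\ast = m_0(\psi,\psi^{-1}) D_\ast \sigma^\sharp(\psi)\, c(\psi, T^*)^{-1}$. Taking adjoints, $\tilde{T}_\ast = (\widetilde{T^*})^*$ is associated with $(\widetilde{\sigma^\sharp})^\sharp$ (by the same adjoint principle just established, now applied to $\widetilde{T^*}$); this representation extends $(\sigma^\sharp)^\sharp = \sigma$, so by the uniqueness built into Lemma~\ref{lifting} it must be the $\tilde{\sigma}_\ast$ of the statement. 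It then remains only to unwind $(\widetilde{\sigma^\sharp})^\sharp$. Since $\sharp$ distributes over $\oplus$ and $\otimes$, and since the definition of $D_1^-$ in Notation~\ref{notations}(c) yields $(D_1^+)^\sharp(\varphi) = m_0(\varphi, \varphi^{-1}) D_1^-(\varphi)$ (recall $m_0$ is $\pm 1$-valued), I obtain $(\widetilde{\sigma^\sharp})^\sharp = \sigma \oplus (\pi_\ast \otimes D_1^-)$ with $\pi_\ast(\varphi) := m_0(\varphi,\varphi^{-1})\,\pi'(\varphi^*)$.

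The final and most delicate step, which is the main obstacle, is to convert this expression for $\pi_\ast$ into the advertised formula; this is pure multiplier-and-square-root bookkeeping. Substituting $\psi = \varphi^*$ into the formula for $\pi'$ and using $\sigma^\sharp(\varphi^*) = \sigma(\varphi)$ together with $m_0(\varphi^*, {\varphi^*}^{-1}) = m_0(\varphi,\varphi^{-1})$ from Equation~\eqref{mdiag}, the one genuinely new computation needed is the identity $c(\varphi^*, T^*) = m_0(\varphi,\varphi^{-1})\, c(\varphi,T)^*$, which I would verify directly from the explicit formula for $c$ in Notation~\ref{notations}(a): writing $\varphi(z)=\beta\frac{z-\alpha}{1-\bar\alpha z}$, the map $\varphi^*$ carries parameters $\bar\beta,\bar\alpha$, and one uses $\overline{s(\beta)} = s(\beta)^{-1}$ and $s(\beta)s(\bar\beta) = m_0(\varphi,\varphi^{-1})$. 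Feeding this in, the two factors of $m_0(\varphi,\varphi^{-1})$ cancel and one is left with exactly $\pi_\ast(\varphi)D_\ast = m_0(\varphi,\varphi^{-1}) D_\ast \sigma(\varphi)\,{c(\varphi,T)^{-1}}^*$; since $D_\ast$ has dense range this determines $\pi_\ast$ and completes the proof. Keeping the sign conventions of $\sharp$, $s$, and $m_0$ mutually consistent throughout is where the care is required. (Alternatively, one could bypass duality and argue directly from the co-isometric model \eqref{coisodilform}, mimicking Theorem~\ref{isorepdil}: compute $\varphi(\tilde{T}_\ast)$ as in Lemma~\ref{phiofThat}, equate the diagonal block carrying $I\otimes S^*$ to pin down $\pi_\ast \otimes D_1^-$ via the commutant of $I\otimes S^*$, and read off $\pi_\ast$ from the off-diagonal block; the bookkeeping is of comparable length.)
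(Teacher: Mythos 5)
Your proposal is correct and follows essentially the same route as the paper: both reduce to Theorem \ref{isorepdil} applied to $(T^{*},\sigma^{\sharp})$, using that $\tilde{T}_{*}$ is the adjoint of the minimal isometric dilation of $T^{*}$ and the identity $c(\varphi^{*},T^{*})=m_0(\varphi,\varphi^{-1})\,c(\varphi,T)^{*}$ to convert the resulting formula. The only cosmetic difference is that the paper first obtains the block form $\sigma\oplus(\pi_{*}\otimes D_1^{-})$ by re-running the argument of Theorem \ref{isorepdil} on the co-isometric model and invokes duality only for the formula for $\pi_{*}$, whereas you derive both the structure and the formula from the $\sharp$-duality; the substance is identical.
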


\begin{proof}
Arguing as in the proof of Theorem \ref{isorepdil}, we see that $\tilde{\sigma}_*=\sigma\oplus (\pi_*\otimes D_1^-)$, where $\pi_*$ is a projective representation of \Mob\ living on $\mathcal D_*$. 
To find the formula for $\pi_*$, note that ${\tilde{T}_*}^*$ is the minimal isometric dilation of the contractive associator $T^*$ and $\sigma^\#$ is an associated representation of $T^*$. Thus ${\tilde{\sigma}_*}^\# = \sigma^\# \oplus (\pi_*^\# \otimes m_0(\varphi, \varphi^{-1}) D_1^+)$ is the unique representation associated with ${\tilde{T}_*}^*$ and 
extending $\sigma^\#$. Therefore, by Theorem \ref{isorepdil} applied to $T^*$, we see that $\pi_*^\#(\varphi) D_* =  D_* \sigma^\#(\varphi) c(\varphi, T^*)^{-1}$. That is, 
$$\pi_*(\varphi) D_* = \pi_*^\#(\varphi^*) D_* =  D_* \sigma(\varphi) c(\varphi^*, T^*)^{-1}.$$ But it is easy to see from Equation \eqref{mdiag} that $c(\varphi^*, \bar{z}) = m_0(\varphi, \varphi^{-1}) \overline{c(\varphi,z)}$.  Hence $c(\varphi^*,T^*) =m_0(\varphi, \varphi^{-1}) c(\varphi,T)^*$. Thus we get the above formula for $\pi_*$.  
\end{proof}

Since the minimal unitary dilation $\widehat{T}$ of a contraction $T$ is built by gluing together its (minimal) isometric and co-isometric dilations as described in Theorem \ref{unidil}, it follows that :

\begin{theorem} \label{unirepdilform} If $T$ is a contractive associator with associated representation $\sigma$ then the unique projective representation $\hat{\sigma}$ extending $\sigma$ and associated with the minimal unitary dilation $\widehat{T}$ of $T$ is given by the formula
$$ \hat{\sigma} = (\pi \otimes D_1^+) \oplus \sigma \oplus (\pi_\ast \otimes D_1^-), $$
where $\pi$ and $\pi_\ast$ are the projective representations of \Mob\ living on $\mathcal{D}$ and $\mathcal{D}_\ast$ given by Theorems \ref{isorepdil} and \ref{coisorepdil}. \end{theorem}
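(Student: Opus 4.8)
The plan is to assemble Theorem~\ref{unirepdilform} from the three preceding results by the same gluing argument that built $\widehat{T}$ from its isometric and co-isometric pieces in Theorem~\ref{unidil}. First I would invoke Theorem~\ref{unirepdil} to guarantee the existence and uniqueness of a projective representation $\hat{\sigma}$ extending $\sigma$ and associated with $\widehat{T}$; this reduces the task to \emph{identifying} $\hat{\sigma}$ with the claimed direct sum, since uniqueness means any correctly-behaving candidate must equal $\hat{\sigma}$.

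The core of the argument is then to exhibit $(\pi \otimes D_1^+) \oplus \sigma \oplus (\pi_\ast \otimes D_1^-)$ as a representation associated with $\widehat{T}$ and extending $\sigma$. I would use the block structure of $\widehat{T}$ from Theorem~\ref{unidil}, acting on $(\mathcal{D} \otimes H^2) \oplus \mathcal{H} \oplus (\mathcal{D}_\ast \otimes H^2)$, together with the block structure of $\varphi(\widehat{T})$ supplied by Lemma~\ref{phiofThat}. The strategy is to verify the intertwining relation
\[
\big((\pi(\varphi) \otimes D_1^+(\varphi)) \oplus \sigma(\varphi) \oplus (\pi_\ast(\varphi) \otimes D_1^-(\varphi))\big)\,\varphi(\widehat{T}) = \widehat{T}\,\big((\pi(\varphi) \otimes D_1^+(\varphi)) \oplus \sigma(\varphi) \oplus (\pi_\ast(\varphi) \otimes D_1^-(\varphi))\big)
\]
entry by entry as a $3\times 3$ matrix equation. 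Here the diagonal intertwiners on the first and third summands come from the fact that $D_1^+$ and $D_1^-$ are the representations associated with $S$ and $S^*$ respectively, while the middle summand uses that $\sigma$ is associated with $T$. The off-diagonal $(1,2)$ and $(2,3)$ checks are exactly the computations already carried out in the proofs of Theorems~\ref{isorepdil} and~\ref{coisorepdil}, so those two results do most of the work: the restriction of $\hat\sigma$ to the invariant isometric-dilation subspace $(\mathcal{D} \otimes H^2) \oplus \mathcal{H}$ is forced to be $\tilde\sigma = (\pi \otimes D_1^+) \oplus \sigma$, and its compression to $\mathcal{H} \oplus (\mathcal{D}_\ast \otimes H^2)$ is forced to be $\tilde\sigma_\ast = \sigma \oplus (\pi_\ast \otimes D_1^-)$.

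The decisive observation is that the minimal isometric and co-isometric dilations sit inside $\widehat{T}$ as the compressions to the two distinguished subspaces generated by $\{\widehat{T}^n x : n \ge 0\}$ and $\{\widehat{T}^n x : n \le 0\}$, as noted in the proof of Theorem~\ref{unidil}. Since $\hat\sigma$ extends both $\tilde\sigma$ and $\tilde\sigma_\ast$ (each of which extends $\sigma$), and these two sub-representations already pin down the behaviour of $\hat\sigma$ on the three summands $\mathcal{D} \otimes H^2$, $\mathcal{H}$, and $\mathcal{D}_\ast \otimes H^2$, the formula for $\hat\sigma$ is completely determined: it must agree with $\pi \otimes D_1^+$ on the first summand (from $\tilde\sigma$), with $\sigma$ on the middle (from either), and with $\pi_\ast \otimes D_1^-$ on the last (from $\tilde\sigma_\ast$). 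Uniqueness in Theorem~\ref{unirepdil} then closes the loop, since the displayed direct sum is a representation extending $\sigma$ and associated with $\widehat{T}$, so it coincides with $\hat\sigma$.

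The main obstacle I anticipate is confirming that the three separately-determined pieces are genuinely \emph{compatible}, i.e.\ that they fit together into a single globally-defined representation rather than merely agreeing on overlapping compressions. The subtlety is that $\tilde\sigma$ and $\tilde\sigma_\ast$ are representations on \emph{different} subspaces that overlap only in $\mathcal{H}$, and one must check that the $(1,3)$ off-diagonal block of the intertwining relation is automatically satisfied by the direct-sum candidate. This is where I would lean on the uniqueness half of Theorem~\ref{unirepdil}: rather than verifying the $(1,3)$ entry by a direct (and messy) computation involving $A_{13}^\varphi$, I would argue that the candidate $\hat\sigma$ extends $\sigma$ and its restrictions to the two invariant/co-invariant subspaces are the associated representations $\tilde\sigma$ and $\tilde\sigma_\ast$ guaranteed by the earlier theorems, and that any projective representation with these restriction properties intertwining $\varphi(\widehat{T})$ and $\widehat{T}$ is unique --- hence the awkward $(1,3)$ block check is rendered unnecessary by the lifting lemma's rigidity.
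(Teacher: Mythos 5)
Your proposal is correct and follows essentially the same route as the paper, which proves this theorem with a single remark that $\widehat{T}$ is obtained by gluing the isometric and co-isometric dilations of Theorems \ref{isorepdil} and \ref{coisorepdil}. Your resolution of the $(1,3)$-block worry is the right one: since $\hat{\sigma}(\varphi)$ exists and is unique (Theorem \ref{unirepdil}), and as a unitary it must preserve the canonically determined isometric- and co-isometric-dilation subspaces (hence also their orthogonal complements, forcing block-diagonality), its three diagonal blocks are pinned down by the uniqueness in Lemma \ref{lifting} applied to $\tilde{\sigma}$ and $\tilde{\sigma}_\ast$, so no direct computation with $A_{13}^\varphi$ is needed.
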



The appearance of the representations $\pi$ and $\pi_\ast$ in these two theorems is rather mysterious. We give below a direct verification that these are indeed projective representations of the group M\"{o}b.

\begin{theorem} \label{tworeps} Let $T$ be a contractive associator with associated representation $\sigma$. Then the functions $\pi : \mbox{\Mob} \to \mathcal{U}(\mathcal{D})$ and $\pi_\ast : \mbox{\Mob} \to \mathcal{U}(\mathcal{D}_\ast)$ given by the formula
$$ \pi (\varphi)D = m_0(\varphi, \varphi^{-1})D \sigma (\varphi) c(\varphi, T)^{-1}, \: \pi_\ast (\varphi)D_\ast = m_0(\varphi, \varphi^{-1}) D_\ast \sigma (\varphi){c(\varphi, T)^{-1}}^*$$
are projective representations of M\"{o}b. If $m$ is the multiplier of $\sigma$ then the common multiplier of $\pi$ and $\pi_\ast$ is $m \cdot m_0$ (pointwise product). 
\end{theorem}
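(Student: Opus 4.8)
The plan is to verify the defining relation of a projective representation, namely $\pi(\varphi_1\varphi_2)=(m\cdot m_0)(\varphi_1,\varphi_2)\,\pi(\varphi_1)\pi(\varphi_2)$, directly from the displayed formula for $\pi$, and then to run the symmetric argument for $\pi_\ast$. Three preliminary ingredients, all consequences of facts already in the excerpt, would be assembled first, after which the composition law becomes a mechanical (if bookkeeping-heavy) computation that collapses to a single scalar identity about $m_0$.

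First I would check that $\pi(\varphi)$ is a well-defined isometry of $\mathcal{D}$. Since $D$ has dense range it suffices to compute $\|\pi(\varphi)Dx\|^2$ for $x\in\mathcal{H}$. As $|m_0|=1$, this equals $\langle (I-T^*T)\sigma(\varphi)c(\varphi,T)^{-1}x,\ \sigma(\varphi)c(\varphi,T)^{-1}x\rangle$. The associator identity $\sigma(\varphi)^*T\sigma(\varphi)=\varphi(T)$ converts $\sigma(\varphi)^*(I-T^*T)\sigma(\varphi)$ into $I-\varphi(T)^*\varphi(T)$, and then Lemma \ref{defecttransf} (in the form $I-\varphi(T)^*\varphi(T)=c(\varphi,T)^*(I-T^*T)c(\varphi,T)$) collapses the expression to $\langle(I-T^*T)x,x\rangle=\|Dx\|^2$. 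This yields well-definedness and isometry at one stroke, so $\pi(\varphi)$ extends to an isometry; unitarity will be deduced at the end. Second, from $\varphi(T)=\sigma(\varphi)^*T\sigma(\varphi)$ and the holomorphic functional calculus (legitimate since, by Notation \ref{notations}(a), $c(\varphi_1,\cdot)^{-1}$ is analytic and nonvanishing on a neighbourhood of $\overline{\mathbb{D}}$, which contains the spectrum of the contraction $\varphi_2(T)$) I would record the commutation relation $c(\varphi_1,T)^{-1}\sigma(\varphi_2)=\sigma(\varphi_2)\,c(\varphi_1,\varphi_2(T))^{-1}$. Third, specialising Equation \eqref{pm 1} with $\varphi_1,\varphi_2$ replaced by $\varphi_2^{-1},\varphi_1^{-1}$ gives the chain-rule identity $c(\varphi_1,\varphi_2(z))c(\varphi_2,z)=m_0(\varphi_2^{-1},\varphi_1^{-1})\,c(\varphi_1\varphi_2,z)$, which I would promote to the functional calculus as $c(\varphi_1,\varphi_2(T))c(\varphi_2,T)=m_0(\varphi_2^{-1},\varphi_1^{-1})\,c(\varphi_1\varphi_2,T)$, the two factors commuting since both are functions of $T$.

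With these in hand, evaluating $\pi(\varphi_1)\pi(\varphi_2)D$ is routine: apply the formula for $\pi(\varphi_2)$, then for $\pi(\varphi_1)$, push $c(\varphi_1,T)^{-1}$ past $\sigma(\varphi_2)$ by the commutation relation, use $\sigma(\varphi_1)\sigma(\varphi_2)=\overline{m(\varphi_1,\varphi_2)}\,\sigma(\varphi_1\varphi_2)$, and collapse the two $c$-factors by the chain-rule identity. Since both sides are bounded and agree on the dense range of $D$, the resulting operator identity holds on all of $\mathcal{D}$, and matching it against the target multiplier $m\cdot m_0$ reduces the whole statement to the purely scalar identity
\[
m_0(\varphi_1\varphi_2,\varphi_2^{-1}\varphi_1^{-1})=m_0(\varphi_1,\varphi_2)\,m_0(\varphi_1,\varphi_1^{-1})\,m_0(\varphi_2,\varphi_2^{-1})\,m_0(\varphi_2^{-1},\varphi_1^{-1}).
\]

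This scalar identity is where the genuine content (and the main risk of error) lies, so I would treat it as the crux. It is not a tautology, but follows from two applications of the cocycle identity \eqref{mult} for $m_0$, together with the normalisation $m_0(\cdot,\mathrm{id})=m_0(\mathrm{id},\cdot)=1$ and the fact that $m_0$ is $\pm1$-valued (so that values coincide with their inverses): applying \eqref{mult} to the triple $(\varphi_1,\varphi_2,\varphi_2^{-1})$ expresses $m_0(\varphi_1\varphi_2,\varphi_2^{-1})$, applying \eqref{mult} to $(\varphi_1\varphi_2,\varphi_2^{-1},\varphi_1^{-1})$ expresses $m_0(\varphi_1\varphi_2,\varphi_2^{-1}\varphi_1^{-1})$, and eliminating the common factor yields exactly the displayed relation. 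Finally, specialising the established composition law to $\varphi_2=\varphi_1^{-1}$ gives $\pi(\varphi_1)\pi(\varphi_1^{-1})=(\text{scalar})\,\pi(\mathrm{id})=(\text{scalar})\,I$, whence each isometry $\pi(\varphi)$ is surjective, hence unitary, completing the proof that $\pi$ is a projective representation with multiplier $m\cdot m_0$. For $\pi_\ast$ I would repeat verbatim with adjoints: the isometry step now invokes the second identity of Lemma \ref{defecttransf}, the commutation relation $c(\varphi_1,T)^{-1*}\sigma(\varphi_2)=\sigma(\varphi_2)\,c(\varphi_1,\varphi_2(T))^{-1*}$ is obtained by taking adjoints of its $\pi$-counterpart, and since the scalar $m_0(\varphi_2^{-1},\varphi_1^{-1})$ is real the same multiplier $m\cdot m_0$ emerges.
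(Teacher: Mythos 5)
Your proposal is correct and follows essentially the same route as the paper's proof: the same isometry computation via the associator identity and Lemma \ref{defecttransf}, the same commutation relation $c(\varphi_1,T)^{-1}\sigma(\varphi_2)=\sigma(\varphi_2)c(\varphi_1,\varphi_2(T))^{-1}$ together with the chain-rule identity from Equation \eqref{pm 1}, and the same reduction to a scalar identity for $m_0$ (your displayed relation is, since $m_0$ is $\pm1$-valued, exactly the paper's Equation \eqref{simple}, obtained likewise from two applications of the cocycle identity \eqref{mult}). The only item the paper records that you omit is the one-line observation that $\pi$ and $\pi_\ast$ are Borel with $\pi(\mathrm{id})=I$, which is immediate from the defining formulae.
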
 

\begin{proof} Clearly $\pi$ and $\pi_\ast$ are Borel functions.  For $\varphi_1, \varphi_2$ in \Mob\,  we get from Equation \ref{pm 1}
\begin{eqnarray*} \pi(\varphi_1)\pi(\varphi_2) D & = & m_0(\varphi_2, \varphi_2^{-1}) \pi(\varphi_1) D \sigma (\varphi_2) c(\varphi_2, T)^{-1}\\
& = &m_0(\varphi_1, \varphi_1^{-1}) m_0(\varphi_2, \varphi_2^{-1})  D\sigma (\varphi_1) c(\varphi_1, T)^{-1} \sigma (\varphi_2) c(\varphi_2, T)^{-1} \\
& = &m_0(\varphi_1, \varphi_1^{-1}) m_0(\varphi_2, \varphi_2^{-1}) D \sigma(\varphi_1) \sigma (\varphi_2) c(\varphi_1, \varphi_2 (T))^{-1} c (\varphi_2, T)^{-1} \\
& = & \bar{m}(\varphi_1, \varphi_2) m_0(\varphi_1, \varphi_1^{-1}) m_0(\varphi_2, \varphi_2^{-1}) {m}_{0}(\varphi_2^{-1}, \varphi_1^{-1}) D \sigma (\varphi_1 \varphi_2) c(\varphi_1\varphi_2, T)^{-1}\\
& = & \bar{m}(\varphi_1, \varphi_2) m_0(\varphi_1, \varphi_1^{-1}) m_0(\varphi_2, \varphi_2^{-1}) {m}_{0}(\varphi_2^{-1}, \varphi_1^{-1}) m_0(\varphi_1\varphi_2,\varphi_2^{-1}\varphi_1^{-1}) \pi(\varphi_1 \varphi_2) D.
\end{eqnarray*}
Applying Equation \eqref{mult} with $\varphi_3 = \varphi_2^{-1} \varphi_1^{-1}$, we get 
$$m_0(\varphi_1, \varphi_2) m_0(\varphi_1\varphi_2, \varphi_2^{-1}\varphi_1^{-1}) = m_0(\varphi_1,\varphi_1^{-1})
m_0(\varphi_2,\varphi_2^{-1} \varphi_1^{-1}).$$
Applying Equation \eqref{mult} to $m_0$ after the substitutions $\varphi_1 \mapsto \varphi_2,$ $\varphi_2 \mapsto \varphi_2^{-1}$, $\varphi_3 \mapsto \varphi_1^{-1}$, we get 
$$m_0(\varphi_2,\varphi_2^{-1}) = m_0(\varphi_2,\varphi_2^{-1}\varphi_1^{-1}) m_0(\varphi_2^{-1}, \varphi_1^{-1}).$$
Thus
$$m_0(\varphi_1, \varphi_2) m_0(\varphi_1\varphi_2, \varphi_2^{-1}\varphi_1^{-1}) m_0(\varphi_1,\varphi_1^{-1}) = m_0(\varphi_2,\varphi_2^{-1}\varphi_1^{-1}) = m_0(\varphi_2, \varphi_2^{-1}) m_0(\varphi_2^{-1}, \varphi_1^{-1}).$$
Hence we get 
\begin{equation}\label{simple}
m_0(\varphi_1,\varphi_1^{-1})m_0(\varphi_2,\varphi_2^{-1}) m_0(\varphi_1\varphi_2, \varphi_2^{-1}\varphi_1^{-1}) m_0(\varphi_2^{-1}, \varphi_1^{-1}) = m_0(\varphi_1, \varphi_2).
\end{equation}

Thus, $\pi(\varphi_1 \varphi_2) = (m \cdot m_0)(\varphi_1, \varphi_2) \pi(\varphi_1) \pi(\varphi_2)$. It is clear from the formula for $\pi$ that $\pi({\rm id})=I$. Therefore, to show that $\pi$ is a (projective unitary) representation with multiplier $m \cdot m_0$, it only remains to establish that $\pi (\varphi)$ is unitary for all $ \varphi$ in M\"{o}b. In view of the `quasi-homomorphism' property of $\pi$ already verified, it suffices to show that $\pi (\varphi)$ is an isometry. 
%
Now, to verify that $\pi (\varphi)$ is an isometry, one observes:
\begin{eqnarray*} \langle \pi (\varphi)Dx, \pi (\varphi)Dy \rangle 
& = & \langle D \sigma (\varphi) c(\varphi,T)^{-1}x,  D \sigma (\varphi) c(\varphi,T)^{-1} y \rangle \\ 
& = & \langle \sigma (\varphi)^* D^*D \sigma (\varphi)c(\varphi,T)^{-1}x, c(\varphi,T)^{-1}y \rangle \\
& = & \langle {(D^\varphi)}^*D^\varphi c(\varphi, T)^{-1} x, c(\varphi, T)^{-1} y \rangle\\
& = & \langle c(\varphi, T)^* D^*Dx, c(\varphi, T)^{-1} y \rangle \\
& = & \langle D^* D x, y \rangle\\
& = & \langle Dx,Dy \rangle \end{eqnarray*}
for $x,y \in \mathcal{H}$. Here to get the fourth equality, we have used Lemma \ref{defecttransf}. This proves that $\pi$ is indeed a projective unitary representation with multiplier $m \cdot m_0$. An analogous calculation shows that $\pi_*$ is also a projective unitary representation with multiplier $m \cdot m_0$.  
\end{proof}

\subsection{The Product Formula}

In this subsection we present the product formula for homogeneous characteristic functions.  An existential proof of the direct part of this result  was given in \cite{survey} for the class of irreducible homogeneous contractions. 
The theorem  presented here is  constructive and works in the greater generality of contractive associators. (Recall from \cite[Theorem 2.2]{shift} that all irreducible homogeneous operators are associators; but the converse is not true.)  Moreover, the converse part of the following Theorem is entirely new. 

Let $A(\mathbb D)$ be the Banach space of all continuous functions on $\bar{\mathbb D}$ which are holomorphic on $\mathbb D$, with sup norm. Clearly, M\"{o}b  may be viewed as a subset of $A(\mathbb D)$. 

\begin{lemma} \label{total}
M\"{o}b is a total set in $A(\mathbb D)$. 
\end{lemma}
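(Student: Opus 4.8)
The plan is to prove the lemma by duality, reducing it to a single moment computation. Since the Fej\'er (or Ces\`aro) means of any $f \in A(\D)$ converge to $f$ uniformly on $\overbar{\D}$, the polynomials are dense in $A(\D)$; hence it suffices to show that the closed linear span of \Mob contains every monomial $z^n$, $n \geq 0$. By the Hahn--Banach theorem this is in turn equivalent to showing that any bounded linear functional $\mu$ on $A(\D)$ which annihilates \Mob must satisfy $\mu(z^n) = 0$ for all $n$.

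First I would record that each $\varphi \in \mbox{\Mob}$ has the form $\beta\varphi_\alpha$, where $\varphi_\alpha(z) = \frac{z-\alpha}{1-\bar\alpha z}$, $|\alpha| < 1$, $|\beta| = 1$. Since a unimodular scalar multiple of a function already present contributes nothing new to a \emph{complex} linear span, it suffices to work with the one-parameter family $\{\varphi_\alpha : \alpha \in \D\}$. For fixed $\alpha \in \D$ the function $\varphi_\alpha$ is holomorphic in a neighbourhood of $\overbar{\D}$, so its Taylor expansion
\begin{equation*}
\varphi_\alpha(z) = -\alpha + (1 - |\alpha|^2)\sum_{m=0}^{\infty} \bar\alpha^{\,m} z^{m+1}
\end{equation*}
converges uniformly on $\overbar{\D}$, that is, in the norm of $A(\D)$.

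The heart of the argument is then as follows. Writing $c_n := \mu(z^n)$ and applying the bounded functional $\mu$ term by term to the uniformly convergent series above, the hypothesis $\mu(\varphi_\alpha) = 0$ becomes
\begin{equation*}
-\alpha\, c_0 + (1 - |\alpha|^2)\sum_{m=0}^{\infty} \bar\alpha^{\,m} c_{m+1} = 0, \qquad \alpha \in \D.
\end{equation*}
I would regard the left-hand side as a convergent power series in the two real-independent variables $\alpha$ and $\bar\alpha$, and invoke the linear independence of the monomials $\alpha^{j}\bar\alpha^{k}$ on $\D$ to equate coefficients. The coefficient of $\bar\alpha^{\,m}$ with no $\alpha$ gives $c_{m+1} = 0$ for every $m \geq 0$, i.e. $c_n = 0$ for $n \geq 1$, while the coefficient of $\alpha$ gives $c_0 = 0$. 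Thus $\mu(z^n) = 0$ for all $n \geq 0$, so $\mu$ vanishes on all polynomials and hence, by density and continuity, on all of $A(\D)$. Therefore $\mu = 0$, and the closed linear span of \Mob is the whole of $A(\D)$.

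I expect the only delicate points to be bookkeeping rather than conceptual: justifying the interchange of $\mu$ with the infinite sum (immediate from uniform convergence together with the boundedness of $\mu$), and extracting the coefficients from the mixed $\alpha,\bar\alpha$ series correctly --- one must keep the holomorphic and anti-holomorphic powers separate and not be misled by the factor $(1-|\alpha|^2) = 1 - \alpha\bar\alpha$, which couples adjacent coefficients. A more constructive alternative would bypass duality: since $\alpha \mapsto \varphi_\alpha$ is a smooth $A(\D)$-valued map, the closed span also contains its partial derivatives at $\alpha = 0$, and a direct computation gives $\partial_{\bar\alpha}^{\,m}\varphi_\alpha|_{\alpha=0} = m!\,z^{m+1}$ and $\partial_\alpha \varphi_\alpha|_{\alpha=0} = -1$, again yielding every monomial. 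Since establishing that these norm-limits of difference quotients actually lie in the closed span requires comparable care, I would favour the cleaner duality argument above.
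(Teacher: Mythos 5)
Your proof is correct and follows essentially the same route as the paper's: reduce by Hahn--Banach to showing that a bounded functional annihilating M\"{o}b must kill every monomial, expand $\varphi_\alpha(z)=-\alpha+(1-|\alpha|^2)\sum_{m\ge 0}\bar{\alpha}^m z^{m+1}$ as a norm-convergent series in $A(\mathbb D)$, apply the functional termwise, and read off the vanishing of all moments from the resulting identity in $\alpha$ and $\bar{\alpha}$. The only (harmless) differences are that you get polynomial density from Fej\'er means rather than Mergelyan's theorem, work with the abstract functional instead of passing to a measure on $\mathbb T$ via Riesz representation, and spell out the coefficient-extraction step that the paper leaves implicit.
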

\begin{proof}
It suffices to show that if $\eta$ is a bounded linear functional on $A(\mathbb D)$ such that $\eta(\varphi) = 0$ for all $\varphi\in \mbox{\rm M\"{o}b}$,then $\eta\cong 0$. Note that $A(\mathbb D)$ may be viewed as a closed subspace of $C(\mathbb T)$, the space of continuous functions on $\mathbb T$. Therefore, by Hahn-Banach Theorem and the Riesz representation Theorem, there is a complex Borel measure $\mu$ on $\mathbb T$ such that $\eta(f) = \int f d\mu$, $f\in A(\mathbb D)$. 
For $n \geq  0,$ let $e_n$ in $A(\mathbb D)$ be the function $z\mapsto z^n$. By Mergelyan's Theorem, the set $\{e_n: n\geq 0\}$ is a total set in $A(\mathbb D)$. Therefore, to complete the proof, it suffices to show that $\int \varphi d\mu = 0$ for all $\varphi$ in \Mob implies $\int e_n d\mu = 0$ for all $n \geq 0$. 
Take $\varphi(z) = \frac{z-\alpha}{1-\bar{\alpha} z},$ where $\alpha \in \mathbb D$ is arbitrary. Note that we have the representation (Taylor expansion) 
$$\varphi = -\alpha e_0 + \sum_{n=1}^\infty (1- \alpha \bar{\alpha})\bar{\alpha}^{n-1} e_n,$$
where the series converges in the norm of $A(\mathbb D)$. Therefore, integrating with respect to $\mu$, we get 
$$-\alpha \int e_0 d\mu + \sum_{n=1}^\infty (1-\alpha \bar{\alpha})\bar{\alpha}^{n-1}\int e_n d\mu = 0$$
for all $\alpha \in \mathbb D$. Hence $\int e_n d\mu = 0$ for all $n\geq 0$.  
\end{proof}

\begin{notation} For $z \in \D$, $\varphi_z$ is the unique involution (element of order $2$) in \Mob\ which interchanges $0$
and $z$. Explicitly, we have $\varphi_z(w)=\frac{z-w}{1-\bar{z}w}$ for $w \in \D$. Also, $\K = \{ \varphi \in \mbox{\Mob} :~ \varphi(0) = 0 \} =\{z \mapsto \beta z : \beta \in \mathbb T \}$ is the  
standard maximal compact subgroup of M\"{o}b.
\end{notation}

\begin{theorem} \label{proform}
Let $T$ be a cnu contractive associator with associated representation $\sigma$. Then its characteristic function $\theta$ is given by $\theta (z) = \pi_\ast (\varphi_z)^* C \pi(\varphi_z), \: z \in \D $, where 
$\pi$ and $\pi_\ast$ are the representations (living on $\mathcal{D}$ and $\mathcal{D}_\ast$) given in Theorem \ref{tworeps}, and $C : \mathcal{D} \to \mathcal{D}_\ast $ is the pure contraction given by $Cx =-Tx, \: x \in \mathcal{D}$ (which intertwines  $\pi|_{\K}$ and $\pi_\ast|_{\K}$).

Conversely, if $\pi_\ast,~\pi$ are \pr unitary \reps of \Mob\ with a
common multiplier, and $C$ 
is a purely  contractive intertwiner between $\pi |_{\K}$ and $\pi_\ast
|_{\K}$ such that the function $\theta$ defined by $\theta (z) = \pi_\ast
(\varphi_z)^*C \pi (\varphi_z)$   is analytic on $\D$, then $\theta$ is the
\cf\  of a cnu contractive associator. 
\end{theorem}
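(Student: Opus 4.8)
\emph{The direct part.}
This follows by computation from formulae already in hand. Theorem~\ref{charform} gives $\theta(0)D=-D_\ast T=CD$, so $\theta(0)=C$ (as $D$ has dense range); hence $C$ inherits pure contractivity from $\theta$, and $C\pi(k)=\pi_\ast(k)C$ for $k\in\K$ is a one-line consequence of $T\sigma(k)=\beta\,\sigma(k)T$ (valid for $k:w\mapsto\beta w$ because $k(T)=\beta T=\sigma(k)^*T\sigma(k)$). For a general $z$, since $\pi_\ast(\varphi_z)$ is unitary it suffices to prove $\pi_\ast(\varphi_z)\theta(z)=C\pi(\varphi_z)$. Evaluating on $Dx$ and inserting the formulae of Theorems~\ref{charform} and~\ref{tworeps}, the common left factor $m_0(\varphi_z,\varphi_z)D_\ast\sigma(\varphi_z)$ appears on both sides and the claim reduces to the operator identity $(c(\varphi_z,T)^*)^{-1}(I-zT^*)^{-1}(zI-T)=-\varphi_z(T)\,c(\varphi_z,T)^{-1}$. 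With $c(\varphi_z,w)=s(-1)\sqrt{1-|z|^2}\,(1-\bar z w)^{-1}$ and $\varphi_z(T)=(zI-T)(I-\bar z T)^{-1}$, both sides collapse to scalar multiples of $zI-T$, equal because $s(-1)$ is purely imaginary, so that $1/\overline{s(-1)}=-1/s(-1)$.

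\emph{The converse: realization and the covariance identity.}
Because each $\pi(\varphi_z),\pi_\ast(\varphi_z)$ is unitary and $C$ is a pure contraction, $\|\theta(z)x\|=\|C\pi(\varphi_z)x\|<\|\pi(\varphi_z)x\|=\|x\|$ for $x\neq0$, so $\theta$ is pure-contraction valued; being analytic by hypothesis it is a purely contractive analytic function and therefore, by the Sz.-Nagy--Foias theory reviewed in Section~2, coincides with the characteristic function of a cnu contraction $T$. Passing to its model I may take the defect spaces of $T$ to be $\mathcal H_\pi,\mathcal H_{\pi_\ast}$, its minimal unitary dilation $\widehat T$ to live on $\widehat{\mathcal H}$ with reducing subspaces $\mathcal G=\mathcal D\otimes L^2(\T)$, $\mathcal G_\ast=\mathcal D_\ast\otimes L^2(\T)$ satisfying $\overline{\mathcal G+\mathcal G_\ast}=\widehat{\mathcal H}$, and the extended characteristic operator $\Theta:\mathcal G\to\mathcal G_\ast$ to be multiplication by $\theta$. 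The engine of the argument is the covariance identity
$$\theta(\psi^{-1}(\zeta))=\pi_\ast(\psi)^*\,\theta(\zeta)\,\pi(\psi),\qquad \psi\in\mbox{\Mob},$$
which I would prove group-theoretically: since $\varphi_\zeta\,\psi\,\varphi_{\psi^{-1}(\zeta)}$ fixes $0$ it lies in $\K$, so $\varphi_\zeta\psi=k\,\varphi_{\psi^{-1}(\zeta)}$ with $k\in\K$; expanding $\pi(\varphi_\zeta)\pi(\psi)$ and $\pi_\ast(\varphi_\zeta)\pi_\ast(\psi)$ through this relation produces one and the same multiplier scalar (as $\pi,\pi_\ast$ share a multiplier $m$), and the residual $\pi_\ast(k)^*C\pi(k)$ collapses to $C$ by the intertwining $C\pi(k)=\pi_\ast(k)C$. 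Taking radial limits extends the identity to almost every $\zeta\in\T$.

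\emph{The converse: gluing the representation.}
Let $\widehat D$ be the representation of \Mob\ on $L^2(\T)$ given by $(\widehat D(\psi)f)(\zeta)=c(\psi^{-1},\zeta)f(\psi^{-1}(\zeta))$; it has multiplier $m_0$, extends $D_1^+$ on $H^2$, preserves both $H^2$ and $L^2(\T)\ominus H^2$, and is the representation associated with the bilateral shift $M_\zeta=\widehat T|_{\mathcal G}$. Put $A(\psi)=\pi(\psi)\otimes\widehat D(\psi)$ on $\mathcal G$ and $A_\ast(\psi)=\pi_\ast(\psi)\otimes\widehat D(\psi)$ on $\mathcal G_\ast$; both are projective representations with the single multiplier $m\cdot m_0$. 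A direct computation shows that the covariance identity above is precisely equivalent to the intertwining relation $\Theta A(\psi)=A_\ast(\psi)\Theta$. Granting this, I would glue $A(\psi)$ and $A_\ast(\psi)$ into one unitary $\hat\sigma(\psi)$ on $\widehat{\mathcal H}=\overline{\mathcal G+\mathcal G_\ast}$ by exactly the argument in the proof of Theorem~\ref{Nagy-Foias}: there the coincidence relation $(v_\ast\otimes I)\Theta=\tilde\Theta(v\otimes I)$ forced $\langle(v\otimes I)x,(v_\ast\otimes I)x_\ast\rangle=\langle x,x_\ast\rangle$ and hence a well-defined unitary extension, and here the self-intertwining $\Theta A(\psi)=A_\ast(\psi)\Theta$, together with the fact that $A(\psi),A_\ast(\psi)$ implement the homogeneity of $\widehat T$ on $\mathcal G,\mathcal G_\ast$, plays the same role. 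The resulting $\hat\sigma$ is a Borel projective representation with multiplier $m\cdot m_0$, and it is associated with $\widehat T$ since $\widehat D$ is associated with $M_\zeta$ (checked on $\mathcal G$ and on $\mathcal G_\ast$ and then extended by density).

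\emph{The converse: descent to $\mathcal H$, and the main obstacle.}
Since $\widehat D$ preserves both $H^2$ and $L^2(\T)\ominus H^2$, the unitary $\hat\sigma(\psi)$ preserves the invariant shift copy $\mathcal D\otimes H^2\subseteq\mathcal G$ and the co-invariant copy $\mathcal D_\ast\otimes(L^2(\T)\ominus H^2)\subseteq\mathcal G_\ast$; these are exactly the two summands whose orthogonal complement in $\widehat{\mathcal H}$ is the model space $\mathcal H$, so $\hat\sigma(\psi)\mathcal H=\mathcal H$. Then $\sigma:=\hat\sigma|_{\mathcal H}$ is a projective representation, and compressing $\psi(\widehat T)=\hat\sigma(\psi)^*\widehat T\hat\sigma(\psi)$ to the semi-invariant subspace $\mathcal H$ (on which $\widehat T$ is a power dilation of $T$) gives $\psi(T)=\sigma(\psi)^*T\sigma(\psi)$. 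Hence $T$ is a cnu contractive associator whose characteristic function is $\theta$. I expect the main obstacle to be the gluing step: verifying carefully that the covariance identity is exactly the compatibility condition $\Theta A(\psi)=A_\ast(\psi)\Theta$ needed to extend $A(\psi)\oplus A_\ast(\psi)$ to a \emph{single} unitary on $\overline{\mathcal G+\mathcal G_\ast}$, and that the glued family is a genuine (Borel) projective representation on all of $\widehat{\mathcal H}$ rather than merely on each reducing piece; the identification of $\mathcal H$ as the orthocomplement of the two shift copies, required for the descent, is the accompanying bookkeeping.
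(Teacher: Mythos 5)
Your direct part is correct and is essentially the paper's own computation: after pushing the associator relation through $D$ and $D_\ast$, both arguments reduce the product formula to the operator identity $c(\varphi_z,T)^*\varphi_z(T)c(\varphi_z,T)^{-1}=(I-zT^*)^{-1}(T-zI)$, and your derivation of the covariance identity $\theta(\psi^{-1}(\zeta))=\pi_\ast(\psi)^*\theta(\zeta)\pi(\psi)$ from $\varphi_\zeta\psi=k\varphi_{\psi^{-1}(\zeta)}$ is exactly the paper's.

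The converse, however, does not close, and the gap is precisely in the gluing step you flagged. First, the compatibility condition needed to make $x+x_\ast\mapsto A(\psi)x+A_\ast(\psi)x_\ast$ isometric on $\overline{\mathcal G+\mathcal G_\ast}$ is not $\Theta A(\psi)=A_\ast(\psi)\Theta$ but $(\mathbf j_\ast^*\mathbf j)\,A(\psi)=A_\ast(\psi)\,(\mathbf j_\ast^*\mathbf j)$, because the operator encoding the inner products between $\mathcal G$ and $\mathcal G_\ast$ is the Gram operator $\mathbf j_\ast^*\mathbf j=\Theta\,(I\otimes M_\zeta)$, i.e.\ multiplication by $\zeta\theta(\zeta)$, not $\Theta$. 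In the proof of Theorem \ref{Nagy-Foias} this distinction is invisible because $v\otimes I$ and $v_\ast\otimes I$ commute with the bilateral shifts; your $A(\psi),A_\ast(\psi)$ do not. Second, with your symmetric choice $A_\ast(\psi)=\pi_\ast(\psi)\otimes\widehat D(\psi)$ the Gram condition actually fails: writing it out and cancelling $\theta$ via the covariance identity leaves the requirement $\zeta\,c(\psi^{-1},\zeta)=\psi^{-1}(\zeta)\,c(\psi^{-1},\zeta)$ on $\T$, which is false for $\psi\neq\mathrm{id}$. In the coordinates in which $\Theta$ is multiplication by $\theta$ (those of $\Psi$), the correct restriction of $\hat\sigma(\psi)$ to $\mathcal G_\ast$ is $\pi_\ast(\psi)$ tensored with the $L^2(\T)$-extension of $D_1^-$, whose cocycle is $\overline{c(\psi^{-1},\zeta)}$ up to a constant; with that choice the Gram condition reduces to the covariance identity together with the boundary identity $c(\psi^{-1},\zeta)/\overline{c(\psi^{-1},\zeta)}=\bar\zeta\,\psi^{-1}(\zeta)$, which does hold. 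The same error breaks your descent: the visible co-invariant copy of $\mathcal D_\ast\otimes H^2$ sits inside $\mathcal G_\ast\cong\mathcal D_\ast\otimes L^2(\T)$ as $\mathcal D_\ast\otimes\overline{H^2}$ (the closed span of $e_{-n}$, $n\ge 0$, not $L^2(\T)\ominus H^2$), and $\widehat D(\psi)$ does not preserve it since $\widehat D(\psi)\mathbf 1=c(\psi^{-1},\cdot)\in H^2$, whereas the conjugate-cocycle extension does.

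For the record, the paper takes a different route for the converse: from the covariance identity it deduces that $\theta^\varphi$ coincides with $\theta$, uses the uniqueness machinery of Theorem \ref{Nagy-Foias} to produce for each $\varphi$ a unitary $U(\varphi)$ intertwining $\widehat T$ with $\widehat{\varphi(T)}$ and preserving $\mathcal H$, defines $\sigma(\varphi)$ by restriction, and then verifies the cocycle and Borel properties by computing $\sigma(\varphi)$ explicitly on the total set of vectors $v[\psi,x],v_\ast[\psi,x_\ast]$. Your strategy, once the anti-holomorphic cocycle is installed on the $\mathcal G_\ast$ side and the Gram-operator identity is verified, would yield those properties immediately from the explicit formula for $\hat\sigma$ and would re-derive Theorem \ref{unirepdilform} as a by-product; but as written the converse has a genuine error, not just an unchecked detail.
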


\begin{proof}
Using the formulae for $\pi (\varphi)D$ and $\pi_\ast (\varphi)D_\ast$ from Theorem \ref{tworeps} and 
%
the easy identity $CD = -D_\ast T$,  we get, when $T$ is a cnu contractive associator and $\varphi \in \mbox{M\"{o}b}$,  
\begin{eqnarray*} \pi_\ast (\varphi)^* C \pi (\varphi)D & = &m_0(\varphi, \varphi^{-1}) \pi_\ast (\varphi)^* CD \sigma(\varphi) c(\varphi, T)^{-1} \\
 & = & -  m_0(\varphi, \varphi^{-1})\pi_\ast (\varphi)^*D_\ast T \sigma (\varphi) c(\varphi, T)^{-1} \\
 & = &  - D_\ast  {c(\varphi, T)}^* \sigma (\varphi)^* T \sigma (\varphi) c(\varphi, T)^{-1} \\
 & = & -D_\ast {c(\varphi, T)}^* \varphi(T) c(\varphi, T)^{-1}. 
 \end{eqnarray*}
Thus 
\begin{equation} \label{eq:1}
\pi_*(\varphi)^* C\pi(\varphi)D = - D_* c(\varphi,T)^* \varphi(T) c(\varphi,T)^{-1},\, \varphi \in \mbox{M\"{o}b}.
\end{equation}  
Taking $\varphi = k\in \mathbb K$ in Equation \eqref{eq:1}, we get $\pi_*(k)^* C\pi(k) D = - D_*T= CD$, $k\in \mathbb K$. Since $D$ has dense range, this shows that 
$$C\pi(k) = \pi_*(k) C,\,k\in \mathbb K.$$ 
Thus $C$ intertwines $\pi|_\mathbb K$ and $\pi_*|\mathbb K$. 

Taking $\varphi = \varphi_z$, $z\in \mathbb D$,  in Equation \eqref{eq:1} and noting that 
$$c(\varphi_z, T)^* \varphi_z(T) c(\varphi_z, T)^{-1} = (I - zT^*)^{-1} (T-zI),$$ 
we get $\pi_*(\varphi_z)^* C\pi(\varphi_z)D = D_* (I-zT^*)^{-1} (z I - T) = \theta(z) D$ (by Theorem \ref{charform}).    
Since $D$ has dense range, this proves the product formula for $\theta$.                  

For the converse, let $\theta (z) := \pi_\ast (\varphi_z)^*C \pi
(\varphi_z)$ be an analytic function. Since $C$ is a pure
contraction and $\theta (z)$ is obtained from  $C$ by pre- and post-multiplying by unitaries, it follows that  
$\theta$ is pure contraction valued. Hence by \cite{Na-Fo}, $\theta$ is the \cf\ of a
cnu \con $T:\mathcal H \to \mathcal H$. Since $\varphi_0\in \mathbb K$, $C$ intertwines $\pi(\varphi_0)$ with $\pi_*(\varphi_0)$. Therefore $C=\pi_*(\varphi_0)^*C\pi(\varphi_0)= \theta(0)$.  Since $\theta$ is given in terms of $T$ by Theorem  \ref{charform}, it follows that the domain and codomain of $C$ are the defect spaces $\mathcal D$, $\mathcal D_*$ of $T$. For $\varphi \in \mbox{\Mob}$ and $w
\in \D$, write $\varphi_w \varphi = k \varphi_z$ where $k \in \K$ and
$z = (\varphi_w \varphi)^{-1}(0)= \varphi^{-1}(w)$. Then we have
\begin{eqnarray*} 
\pi_\ast (\varphi)^* \theta (w) \pi (\varphi) & = & \pi_\ast (\varphi)^* \pi_\ast
(\varphi_w)^* C \pi (\varphi_w) \pi (\varphi)\\
& = & \pi_\ast (\varphi_w \varphi)^* C \pi(\varphi_w \varphi) \\
& = & \pi_\ast (k \varphi_z)^* C \pi (k\varphi_z) \\
& = & \pi_\ast (\varphi_z)^* \pi_\ast (k)^* C \pi (k) \pi (\varphi_z) \\
& = & \pi_\ast (\varphi_z)^* C \pi (\varphi_z) \\
& = & \theta (\varphi^{-1}(w)). 
\end{eqnarray*}
(Here, for the second and fourth equality we have used the assumption
that $\pi_\ast$ and $\pi$ are \pr  \reps with a common multiplier. For
the penultimate equality, the assumption that $C$ intertwines $\pi
|_{\K}$ and $\pi_\ast |_{\K}$ has been used.) 
Now, part (b) of Theorem \ref{covar} implies that the characteristic function  $\theta^{\varphi}$ of $\varphi(T)$ is related to $\theta$ by the equation 
$$\theta^\varphi(w) = u_*(\varphi)\pi_*(\varphi)^* \theta(w) (u(\varphi) \pi(\varphi)^*)^*,$$
where $u(\varphi)$,  $u_*(\varphi)$ are the unitaries given by Lemma \ref{defecttransf}.  Thus, $\theta^\varphi$ coincides with $\theta$.
Therefore, following the proof of Theorem \ref{Nagy-Foias} (with $\tilde{T} = \varphi(T)$ and $v(\varphi) = u(\varphi) \pi(\varphi)^*, v_*(\varphi) = u_*(\varphi) \pi_*(\varphi)^*$) we get a unitary $U(\varphi):\widehat{\mathcal H} \to \widehat{\mathcal H}^\varphi$ such that $U(\varphi)$ maps $\mathcal H$ onto $\mathcal H$ and $\widehat{\varphi(T)}=U(\varphi) \widehat{T}U(\varphi)^*$. Let $\sigma(\varphi):\mathcal H\to \mathcal H
$ be the unitary obtained by restricting $m_0(\varphi, \varphi^{-1}) U(\varphi)^*$ to $\mathcal H$. Since $T$ and $\varphi(T)$ are the compressions of $\widehat{T}$ and $\widehat{\varphi(T)}$ (respectively) to $\mathcal H$, it follows that $\varphi(T) = \sigma(\varphi)^* T \sigma(\varphi),$ $\varphi\in \mbox{\rm M\"{o}b}$.  Therefore, to complete the proof, it suffices to show that $\varphi \mapsto \sigma(\varphi)$ is a projective unitary representation of M\"{o}b. We shall do so by finding an explicit formula for $\sigma(\varphi)$. Indeed we shall find a total subset $Z$ of $\mathcal H$ on which $\sigma(\varphi)$ acts as a signed permutation. 

Explicitly, for $n\geq 0$, let $e_n\in A(\mathbb D)$ be as in the proof of Lemma \ref{total}. Then the proof of Theorem \ref{Nagy-Foias} (converse part) shows that 
$$X:=\big \{e_n(\widehat{T})(x\otimes \mathbf 1): n\geq 0, x\in \mathcal D\cup \mathcal D_*\} \cup \{e_n(\widehat{T})^* (x\otimes \mathbf 1): n\geq 0, x\in \mathcal D\cup \mathcal D_* \big \}$$
is a total set in $\widehat{\mathcal H}$, and $U(\varphi)$ is given on this total set by the formulae (for $n\geq 0$, $x\in \mathcal D, x_*\in \mathcal D_*$) 
\begin{eqnarray*}
U(\varphi)(e_n(\widehat{T})(x\otimes \mathbf 1) ) &=& e_n (\widehat{\varphi(T)}) (u(\varphi) \pi(\varphi)^* x \otimes \mathbf 1)
\\
U(\varphi)(e_n(\widehat{T})^* (x\otimes \mathbf 1) )&=& e_n (\widehat{\varphi(T)})^* (u(\varphi) \pi(\varphi)^* x \otimes \mathbf 1)\\
U(\varphi)(e_n(\widehat{T})(x_*\otimes \mathbf 1) ) &=& e_n (\widehat{\varphi(T)}) (u_*(\varphi) \pi_*(\varphi)^* x_* \otimes \mathbf 1)\\
U(\varphi)(e_n(\widehat{T})^* (x_*\otimes \mathbf 1) ) &=& e_n (\widehat{\varphi(T)})^*(u_*(\varphi) \pi_*(\varphi)^* x_* \otimes \mathbf 1).
\end{eqnarray*}
Since, by the spectral theorem, $f\mapsto f(\widehat{T})$ is a contractive linear transformation from $A(\mathbb D)$ to $\mathcal B(\hat{\mathcal H})$ and since the set $X$ above is total in $\widehat{\mathcal H}$, Lemma \ref{total} implies that the set 
$$Y:=\{\psi(\widehat{T})(x\otimes \mathbf 1): \psi \in \mbox{\rm M\"{o}b}, x\in \mathcal D\cup \mathcal D_*\} \cup \{\psi(\widehat{T})^* (x\otimes \mathbf 1): \psi \in \mbox{\rm M\"{o}b}, x\in \mathcal D\cup \mathcal D_* \}$$
is also a total set in $\widehat{\mathcal H}$, and $U(\varphi)$ is given on this total set by the formulae (for $\psi \in \mbox{\rm M\"{o}b}$, $x\in\mathcal D$, $x_*\in \mathcal D_*$) 
\begin{eqnarray*}
U(\varphi)(\psi(\widehat{T})(x\otimes \mathbf 1) )&=& \psi (\widehat{\varphi(T)}) (u(\varphi) \pi(\varphi)^* x \otimes \mathbf 1)
\\
U(\varphi)(\psi(\widehat{T})^* (x\otimes \mathbf 1) )&=& \psi (\widehat{\varphi(T)})^* (u(\varphi) \pi(\varphi)^* x \otimes \mathbf 1)\\
U(\varphi)(\psi(\widehat{T})(x_*\otimes \mathbf 1) )&=& \psi (\widehat{\varphi(T)}) (u_*(\varphi) \pi_*(\varphi)^* x_* \otimes \mathbf 1)\\
U(\varphi)(\psi(\widehat{T})^* (x_*\otimes \mathbf 1) )&=& \psi (\widehat{\varphi(T)})^*(u_*(\varphi) \pi_*(\varphi)^* x_* \otimes \mathbf 1).
\end{eqnarray*}
For $\psi$ in M\"{o}b, $x\in \mathcal D$, $x_*\in \mathcal D_*$, define the vectors $v[\psi,x], v_*[\psi,x_*]$ in $\mathcal H$ by 
$$
v[\psi,x]:= c(\psi, T)^* D^* x, \,\,
v_*[\psi,x_*]:= c(\psi, T)D_*^* x_*.
$$
 Let $p$ be the orthogonal projection from $\widehat{\mathcal H}$ onto $\mathcal H$. It follows from Lemma \ref{phiofThat} that, for $x\in \mathcal D$, $x_* \in \mathcal D_*$, 
$$
p(\psi(\widehat{T}) (x \otimes \boldsymbol 1)) = 0 = p(\psi(\widehat{T})^*(x_* \otimes \boldsymbol 1)),
$$
and, in terms of the notation in Lemma \ref{phiofThat}, 
\begin{equation}\label{pofpsi}
\begin{cases}
p(\psi(\widehat{T})^*(x \otimes \boldsymbol 1)) &\!\!\!\!=\big (A_{12}^\psi\big )^* (x\otimes \boldsymbol 1)
= \overline{c(\psi,0)} v[\psi,x],\\
p(\psi(\widehat{T})(x_* \otimes \boldsymbol 1)) &\!\!\!\!= A_{23}^\psi (x_*\otimes \boldsymbol 1)
= c(\psi,0) v_*[\psi, x_*].
\end{cases}
\end{equation}
Since the image under $p$ of the total subset $Y$ of $\widehat{\mathcal H}$ is a total subset of $\mathcal H$, and since $c(\psi, 0) \neq 0$, it follows that the set 
\begin{equation}\label{totalZ}
Z:= \{ v[\psi,x]:\psi \in \mbox{\rm M\"{o}b},\,\, x\in \mathcal D\}  \cup \{ v_*[\psi,x_*]:\psi \in \mbox{\rm M\"{o}b},\,\, x_*\in \mathcal D_*\}  
\end{equation}
is a total subset of $\mathcal H$. 

Let $V(\varphi):\widehat{\mathcal H} \to \widehat{\mathcal H}^\varphi$ be the $3\times 3$ block diagonal unitary given by 
$$V(\varphi):= \mbox{\rm diag} (m_0(\varphi,\varphi^{-1}) u(\varphi) \otimes D_1^+(\varphi),\, I,\,m_0(\varphi, \varphi^{-1}) u_*(\varphi)\otimes D_1^-(\varphi)).$$
By Theorem \ref{charoptransf}, we have $\widehat{\varphi(T)} = V(\varphi) \varphi(\widehat{T}) V(\varphi)^*,$ and hence $\psi(\widehat{\varphi(T)})= 
V(\varphi) (\psi \varphi)(\widehat{T})V(\varphi)^*.$
Therefore, letting $q$ be the orthogonal projection from $\widehat{\mathcal H^\varphi}$ onto $\mathcal H$, we have 
\begin{eqnarray*}
q U(\varphi) (\psi(\widehat{T})^* (x\otimes 1) )&=& q(\psi(\widehat{\varphi(T)})^*(u(\varphi)\pi(\varphi)^* x \otimes \boldsymbol 1))\\ 
&=&m_0(\varphi, \varphi^{-1}) \big (A_{12}^{\psi\varphi}\big )^* (\pi(\varphi)^* x\otimes D_1^+(\varphi)^* \boldsymbol 1)\\ 
&=&m_0(\varphi,\varphi^{-1}) (c(\psi \varphi, S)^* D_1^+(\varphi)^* \mathbf 1) (0) v[\psi \varphi, \pi(\varphi)^* x].\\
q U(\varphi) (\psi(\widehat{T}) (x_*\otimes 1) ) &=& q(\psi(\widehat{\varphi(T)})(u_*(\varphi) \pi_*(\varphi)^* x_* \otimes \boldsymbol 1))\\ 
&=& A_{23}^{\psi \varphi} (\pi_*(\varphi)^* x_*\otimes D_1^-(\varphi)^* \boldsymbol 1)\\
&=&m_0(\varphi, \varphi^{-1}) ( c(\psi \varphi, S^*) D_1^-(\varphi)^*\mathbf 1)(0) v_*[\psi\varphi, \pi_*(\varphi)^*x_*].
\end{eqnarray*}
Now we compute
\begin{align*}
(c(\psi \varphi, S)^* D_1^+(\varphi)^* \mathbf 1) (0) &= \langle c(\psi \varphi, S)^* D_1^+(\varphi)^* \mathbf 1 , \mathbf 1\rangle \\
&=\langle \mathbf 1,  D_1^+(\varphi) c(\psi \varphi, S) \mathbf 1 \rangle\\
&= \langle \mathbf 1, D_1^+(\varphi) c(\psi \varphi, \cdot) \rangle\\
&=\langle \mathbf 1, c(\varphi^{-1}, \cdot) c(\psi \varphi, \varphi^{-1}(\cdot) ) \rangle \\
&= m_0(\varphi, \varphi^{-1}\psi^{-1}) \langle \mathbf 1, c(\psi, \cdot )\rangle\\
&= \overline{c(\psi,0)}m_0(\varphi, \varphi^{-1}\psi^{-1}).
\end{align*}
Here the penultimate equality is by Equation \eqref{pm 1} with $\varphi_1 = \varphi$, $\varphi_2 = \varphi^{-1} \psi^{-1}$.

Note that, up to scaling, $c(\varphi^*, \cdot)$ is the Szeg\"{o} kernel at $\overline{\varphi^{-1}(0)}$. Therefore, $c(\varphi^*, \cdot)$ is an eigenvector of $S^*$ with eigenvalue $\varphi^{-1}(0)$. Hence $c(\varphi^*,\cdot)$ is an eigenvector of $c(\psi\varphi, S^*)$ with eigenvalue $c(\psi\varphi, \varphi^{-1}(0))$.  Hence we get 
\begin{align*}
(c(\psi \varphi, S^*) D_1^-(\varphi)^* \mathbf 1) (0) &= m_0(\varphi, \varphi^{-1}) (c(\psi \varphi, S^*) D_1^- (\varphi^{-1}) \mathbf 1)(0)\\
&= (c(\psi \varphi, S^*) c(\varphi^*, \cdot))(0)\\
&=  c(\psi \varphi, \varphi^{-1}(0)) c(\varphi^*, 0)\\
&= c(\psi \varphi, \varphi^{-1}(0)) c(\varphi^{-1}, 0)\\
&= c(\psi, 0)  m_0(\varphi, \varphi^{-1}\psi^{-1}).
\end{align*}
(Here the last equality is by \eqref{pm 1} with $\varphi_1 = \varphi$, $\varphi_2 = \varphi^{-1}\psi^{-1}$, $z=0$.) Therefore we have 
\begin{equation}\label{qofpsi}
\begin{cases}
q U(\varphi)(\psi(\widehat{T})^* (x\otimes 1) )&=\overline{c(\psi,0)}  m_0(\varphi, \varphi^{-1}) m_0(\varphi, \varphi^{-1}\psi^{-1}) v[\psi \varphi, \pi(\varphi)^* x]\\
q U(\varphi)(\psi(\widehat{T}) (x_*\otimes 1)) &=c(\psi,0)  m_0(\varphi, \varphi^{-1}) m_0(\varphi, \varphi^{-1}\psi^{-1}) v_*[\psi \varphi, \pi_*(\varphi)^* x_*].
\end{cases}
\end{equation}

Since $c(\psi,0)\neq 0$ and $\sigma(\varphi)^* p = m_0(\varphi, \varphi^{-1}) q U(\varphi)$, it follows from the equations \eqref{pofpsi} and \eqref{qofpsi} that $\sigma(\varphi)^*$ maps $v[\psi,x]$ to $m_0(\varphi, \varphi^{-1}\psi^{-1})$ $v[\psi\varphi,\pi(\varphi)^*x]$ and $v_*[\psi, x_*]$ to $m_0(\varphi, \varphi^{-1} \psi^{-1})$ $v_*[\psi \varphi, \pi_*(\varphi)^*x_*]$. 
Doing the substitutions $\psi \mapsto \psi \varphi^{-1}$, $x\mapsto \pi(\varphi) x$, $x_* \mapsto \pi_*(\varphi) x_*$, we conclude that $\sigma(\varphi)$ is determined by its values on the total set $Z$ of Equation \eqref{totalZ} by the formula 
\begin{equation} \begin{cases} \label{sigma}
\sigma(\varphi) (v[\psi,  x]) &\!\!\!\!\!= m_0(\varphi, \psi^{-1}) v[\psi \varphi^{-1}, \pi(\varphi) x], \\
\sigma(\varphi) (v_*[\psi, x_*]) &\!\!\!\!\!= m_0(\varphi, {\psi}^{-1}) v_*[\psi \varphi^{-1}, \pi_*(\varphi) x_*],
\end{cases}
\end{equation}
for $x\in \mathcal D$, $x_* \in \mathcal D_*$, $\psi \in \mbox{\rm M\"{o}b}$.

Let $M$ be the linear span of $Z$. Define $\sigma_0:\mbox{\rm M\"{o}b} \to \mathcal B(M)$ by $\varphi \mapsto \sigma(\varphi) |_M$. It is immediate from the formula \eqref{sigma} that the graph of $\sigma_0$ is a Borel subset of $\mbox{\rm M\"{o}b}\times \mathcal B(M)$. But, since $M$ is dense in $\mathcal H$,  $\mathcal B(M)$ may be identified with $\mathcal B(\mathcal H)$ by the restriction map $\mathcal B(\mathcal H) \to \mathcal  B(M)$, which is a homeomorphism. The graph of $\sigma_0$ may be identified with the graph of $\sigma$ via this homeomorphism. Thus the graph of $\sigma$ is Borel.  Therefore, as in the proof of Theorem \ref{unirepdil} one concludes that $\sigma$ is a Borel map. Clearly, \eqref{sigma} shows that $\sigma(\rm{id})$ fixes each vector in $Z$. Therefore $\sigma({\rm id}) = I$. 

Let $m$ be the common multiplier of $\pi$ and $\pi_*$. Applying Equation \eqref{reprep} to $\pi$, $\pi_*$ and applying Equation \eqref{mult} to $m_0$, it is trivial to conclude from Equation \eqref{sigma} that the two unitaries $\sigma(\varphi_1 \varphi_2)$ and $(m \cdot m_0)(\varphi_1 , \varphi_2) \sigma(\varphi_1) \sigma(\varphi_2)$ agree on the total set $Z$.  Therefore these two unitaries are equal.  Thus, $\sigma$ is a projective representation with multiplier $m \cdot m_0$ (pointwise product). Since $\sigma$ is associated with $T$, $T$ is an associator. 
\end{proof}

\section{Contractive associators in the Cowen-Douglas classes: the generic case} 

By Theorem \ref{proform}, the characteristic function of any cnu contractive associator is given as a product involving two ``companion representations" $\pi,\pi_*$ of M\"{o}b, and a ``middle operator" $C$.  The object of this section and the next is to demonstrate that, in any concrete case, the explicit determination of this product formula is a highly non-trivial and challenging problem.

Note that, for any operator $T$, $T$ is a cnu contraction if and only if $T^*$ is. Further, $T$ is an associator if and only if $T^*$ is. Indeed, if the representation $\sigma$ is associated with $T$, then $\sigma^\#$ is an associated representation of $T^*$. If $\theta$ is the characteristic function of $T$, then the characteristic function $\theta^*$ of $T^*$ is given by the formula $\theta^*(z) = \theta(\bar{z})^*$, $z\in \mathbb D$. In consequence, if $\pi, \pi_*, C$ are the companions and the middle operator for $T$, then those of $T^*$ are $\pi_*^\#, \pi^\#, C^*$ (respectively).  Thus, the explicit determination of the product formula for $T$ and $T^*$ are equivalent problems. 

We shall say that an associator $T$ is \emph{multiplicity free} if $T$ has an associated representation $\sigma$ which is multiplicity free, i.e., $\sigma$ is a direct sum of mutually inequivalent irreducible projective unitary representations
of M\"{o}b. 

Recall that for positive integer $n$, the Cowen-Douglas class of rank $n$, denoted $B_n(\mathbb D)$, is the class of all bounded linear operators $T$ such that for all $w\in \mathbb D$, $T-wI$ has dense range and a kernel of dimension $n$.  We shall denote by $B_n^*(\mathbb D)$ the class of all operators whose adjoint is in $B_n(\mathbb D)$.  


%

In Theorem \ref{explicit characteristic function} of this section, we determine the product formula for generic contractive associators
in $B_n^*(\mathbb D)$ such that the associated representation is multiplicity free. All multiplicity free irreducible associators in $B_n^*(\mathbb D)$ were described in \cite{KM}. In \cite{ClassCD}, it was shown that upto unitary equivalence these are all. We now describe them explicitly. However, our parametrization is slightly different. The parameters $\mu_i$ here are $\mu_i^2$ in the notation of \cite{KM,ClassCD}.  We have been forced into this re-parametrization by the contingencies of the proofs.

For positive real numbers $\lambda$, let $\mathcal H^{(\lambda)}$ denote the Hilbert space of holomorphic functions on $\mathbb D$ with reproducing kernel $(z,w) \mapsto (1-z\bar{w})^{-\lambda}$.  Let $D_\lambda^+$ denote the holomorphic Discrete series representation of \Mob living on $\mathcal H^{(\lambda)}$  (see \cite{survey}).  It is given by the formula 
$$(D_\lambda^+(\varphi^{-1}) f )(z) = (\varphi^\prime (z) )^{\tfrac{\lambda}{2}} f(\varphi(z)),\, z\in \mathbb D,\,\varphi\in \mbox{\rm M\"{o}b}.$$
Here the branch of the function $(\varphi, z) \mapsto (\varphi^\prime (z) )^{\tfrac{\lambda}{2}}$ is chosen to be Borel in the first argument and analytic in the second. Let $\mathcal{H}_{n}^{(\lambda)}$ denote the Hilbert space  $\displaystyle \oplus_{i = 0}^{n-1} \mathcal{H}^{(\lambda_{i})},$ where $\lambda_i = \lambda + 2i$ and $n\in \mathbb N$. Given an $n$ - tuple of strictly positive real numbers $\underline{\mu}:= (\mu_0, \mu_1, \ldots , \mu_{n-1})$, let 
 $\Gamma^{(\lambda, \underline{\mu})}$ be the map: $\mathcal{H}_{n}^{(\lambda)} \to \mbox{Hol}(\D, \C^{n})$ defined by:
$$ 
\big (\Gamma^{(\lambda,\underline{\mu})}(\underline{f})\big )_\ell = \sum_{0\leq j \leq \ell}\, \frac{\sqrt{\mu_j}\, \binom{\ell}{j}}{(\lambda + 2j)_{\ell-j}} f_j^{(\ell-j)}
$$
for $0 \leq \ell <n$, $f=\oplus_{0\leq j <n} f_j \in \mathcal H_n^{(\lambda)}$.

Here, and in what follows, for a real number $x$ and a non-negative integer $p$, the Pochammer symbol $(x)_p$ is defined by the formula: 
$$(x)_p := \prod_{0\leq i < p} (x + i).$$ 

Finally, let $\mathcal{H}^{(\lambda, \underline{\mu})}$ be the image of $\Gamma^{(\lambda, \underline{\mu})}$.  The map $\Gamma^{(\lambda, \underline{\mu})}$ is injective, see \cite{KM}. Therefore, transplanting via $\Gamma^{(\lambda, \underline{\mu})}$ the inner product from $\mathcal H^{(\lambda)}_n$ makes $\mathcal{H}^{(\lambda, \underline{\mu})}$ 
a reproducing kernel Hilbert space. 
The representation 
$$D^{(\lambda, \underline{\mu})}:=\Gamma^{(\lambda, \underline{\mu})} \Big (\bigoplus_{0\leq i < n} D_{\lambda_i}^+\Big ) \left(\Gamma^{(\lambda, \underline{\mu})}\right)^*$$ 
is a projective unitary representation of \Mob living on $\mathcal{H}^{(\lambda, \underline{\mu})}$. 
The multiplication operators $M^{(\lambda, \underline{\mu})}$ on the Hilbert space $\mathcal{H}^{(\lambda, \mu)}$ are  irreducible associators in $B_{n}^*(\D)$ with associated representation $D^{(\lambda, \underline{\mu})}$.  These are the only irreducible associators in $B_{n}^*(\D)$ whose associated representation is multiplicity free (see \cite[Corollary 4.1]{ClassCD}). Further, $M^{(\lambda, \underline{\mu})}$ and $M^{(\lambda^\prime, \underline{\mu}^\prime)}$ are unitarily equivalent if and only if $\lambda = \lambda^\prime$ and $\underline{\mu}$ and $\underline{\mu}^\prime$ differ by a scalar multiple. 

The first lemma of this section is essentially Lemma 5.1 in \cite{FK}.

\begin{lemma}[Faraut-Koranyi]\label{4.0}
Let $\mathcal H$ be a functional Hilbert space consisting of $\mathbb C^n$ valued functions on a set $\Omega$. Suppose $\mathcal H$ is the orthogonal direct sum of $m$ non-trivial subspaces $\mathcal H_j$ with corresponding reproducing kernels $K_j$ ($0\leq j < m$). Let $a_j$, $0\leq j < m$, be real numbers. Then the kernel $\sum_{0\leq j < m} a_j K_j$ is non-negative definite if and only if $a_j \geq 0$ for all $j$.  
\end{lemma}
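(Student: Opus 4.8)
The plan is to prove the two implications separately; the forward implication is essentially immediate and the converse carries all the content. If $a_j \geq 0$ for every $j$, then since each $K_j$ is the reproducing kernel of $\mathcal H_j$ it is a non-negative definite kernel, and a non-negative real combination of non-negative definite kernels is again non-negative definite. Hence $\sum_{j} a_j K_j$ is non-negative definite, settling the ``if'' direction. So the real work is the converse: assuming $\sum_j a_j K_j$ is non-negative definite, I must show $a_k \geq 0$ for each fixed index $k$.

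First I would record the standard reformulation of non-negative definiteness in terms of norms. Writing $L := \sum_j a_j K_j$, non-negative definiteness means that for every finite family of points $w_1, \dots, w_p \in \Omega$ and vectors $\zeta_1, \dots, \zeta_p \in \C^n$ one has $\sum_{i,l} \langle L(w_i, w_l)\zeta_l, \zeta_i\rangle \geq 0$. Using the reproducing property in each summand, namely $\langle K_j(\cdot, w_l)\zeta_l, K_j(\cdot, w_i)\zeta_i\rangle_{\mathcal H_j} = \langle K_j(w_i,w_l)\zeta_l, \zeta_i\rangle$, this inequality rewrites as
$$\sum_j a_j \Big\| \sum_i K_j(\cdot, w_i)\zeta_i \Big\|_{\mathcal H_j}^2 \geq 0 .$$
Thus everything reduces to producing, for a fixed $k$, a family of points and vectors that keeps the $k$-th norm bounded away from $0$ while forcing all the other norms to tend to $0$.

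The key device, which I expect to be the crux, is to perform the approximation in the \emph{full} space $\mathcal H$ rather than inside $\mathcal H_k$. Using that $\mathcal H_k$ is non-trivial, fix $f \in \mathcal H_k \subseteq \mathcal H$ with $\|f\| \neq 0$. Since finite linear combinations of the kernel sections $K(\cdot, w)\zeta$ are dense in $\mathcal H$, where $K = \sum_j K_j$ is the reproducing kernel of the orthogonal direct sum $\mathcal H$, I would choose $h^{(r)} = \sum_i K(\cdot, w_i^{(r)})\zeta_i^{(r)} \to f$ in $\mathcal H$. Because $K(\cdot, w)\zeta = \sum_j K_j(\cdot, w)\zeta$ and the $\mathcal H_j$ are mutually orthogonal, the $\mathcal H_j$-component of $h^{(r)}$ is precisely $\sum_i K_j(\cdot, w_i^{(r)})\zeta_i^{(r)}$. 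Continuity of the orthogonal projections onto the $\mathcal H_j$ then forces each component of $h^{(r)}$ to converge to the corresponding component of $f$; since $f \in \mathcal H_k$, its $k$-th component is $f$ and all others vanish. Consequently the $k$-th norm converges to $\|f\|^2 > 0$ while every other norm converges to $0$.

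Finally I would substitute the points $w_i^{(r)}$ and vectors $\zeta_i^{(r)}$ into the displayed inequality and let $r \to \infty$; the limit reads $a_k \|f\|^2 \geq 0$, whence $a_k \geq 0$, and since $k$ was arbitrary this proves the converse. The one genuinely important point, easy to overlook, is exactly this choice of ambient space: approximating $f$ within $\mathcal H_k$ alone would give no control over the off-diagonal norms $\big\|\sum_i K_j(\cdot, w_i)\zeta_i\big\|_{\mathcal H_j}$ for $j \neq k$, whereas approximating in $\mathcal H$ is what drives those norms to zero and isolates the coefficient $a_k$.
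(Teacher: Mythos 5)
Your proof is correct and follows essentially the same route as the paper's: both rewrite the non-negative definiteness of $\sum_j a_j K_j$ as $\sum_j a_j \|P_j h\|^2 \geq 0$ for finite combinations $h$ of kernel sections of the full space $\mathcal H$, extend by density and continuity of the projections, and then specialize to a nonzero $f \in \mathcal H_k$ to isolate $a_k$. The point you flag as crucial (approximating in $\mathcal H$ rather than in $\mathcal H_k$) is precisely the mechanism the paper uses.
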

\begin{proof}
The ``if'' part is trivial. To prove the ``only if'' part, let $P_j:\mathcal H \to \mathcal H_j$ be the orthogonal projections. Note that the reproducing kernel of $\mathcal H$ is $K:= \sum_{0\leq j < m} K_j $ and we have $P_j K(\cdot,w)\xi = K_j(\cdot,w) \xi$ for $w\in \Omega$, $\xi \in \mathbb C^n$. 

Take $f = \sum_{p=1}^\ell b_p K(\cdot,w_p) \xi_p$, where $b_p \in \mathbb C$, $w_p \in \Omega$, $\xi_p \in \mathbb C^n$. We observe:
$$\sum_{0\leq j < m} a_j \|P_j f\|^2 = \sum_{p,q=1}^\ell b_p \bar{b}_q \big \langle \sum_{0\leq j < m} a_j K_j (w_q, w_p)\xi_p, \xi_q\big \rangle_{\mathbb C^n}\geq 0.$$

Here, the inequality holds because of our assumption that the kernel $\sum_{0\leq j <m} a_j K_j$ is non-negative definite. Since the functions $f$ as above form a dense set in $\mathcal H$, it follows that $\sum_{0\leq j < m} a_j \|P_j f\|^2 \geq 0$ for all $f$ in $\mathcal H$. 
In particular, fixing an index $j$ and taking $f \in \mathcal H_j$, $f\neq 0$, we get $a_j \|f\|^2 \geq 0$. Hence $a_j \geq 0$.  
\end{proof}

\begin{lemma}\label{basic}
Let $\lambda\in \mathbb R$, $\underline{\mu} = 
(\mu_0, \ldots, \mu_{n-1}) \in \mathbb R^n$, where $\mu_0 > 0$.  Then the kernel $B^{(\lambda, \underline{\mu})}:\mathbb D \times \mathbb D \to \mathbb C^{n\times n}$  defined by 
$$
B^{(\lambda, \underline{\mu})}(z,w) = \Big (\!\!\Big ( \sum_{j=0}^{\ell\wedge p} \frac{\binom{\ell}{j}\,\binom{p}{j} \,\mu_j}{(\lambda+2j)_{\ell -j} (\lambda+2j)_{p-j}} \partial^{\ell -j} \bar{\partial}^{p-j} 
(1-z \bar{w})^{-(\lambda+2j)}\Big )\!\!\Big)_{0\leq \ell,p < n}
$$
is non-negative definite if and only if $\lambda \geq 0$ and $\underline{\mu} \geq  \underline{0}$ (entrywise). If $\lambda \geq 0$, $\underline{\mu}\geq \underline{0}$, then $B^{(\lambda, \underline{\mu})}$ is the reproducing kernel of $\mathcal H^{(\lambda, \underline{\mu})}$. 
\end{lemma}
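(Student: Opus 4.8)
The plan is to recognize $B^{(\lambda,\underline{\mu})}$ as the reproducing kernel of $\mathcal{H}^{(\lambda,\underline{\mu})}$ obtained by pushing the kernel of $\mathcal{H}^{(\lambda)}_n$ forward through the unitary $\Gamma^{(\lambda,\underline{\mu})}$, and then to exploit two structural features of the resulting formula: it is \emph{linear} in $\underline{\mu}$, and its $(0,0)$ entry isolates the single parameter $\lambda$. Writing
\[
G_j(z,w) = \left(\!\!\left( \frac{\binom{\ell}{j}\binom{p}{j}}{(\lambda+2j)_{\ell-j}(\lambda+2j)_{p-j}}\, \partial^{\ell-j}\bar{\partial}^{p-j}(1-z\bar{w})^{-(\lambda+2j)} \right)\!\!\right)_{0\le \ell,p<n},
\]
(with entries indexed by $j>\ell\wedge p$ read as $0$), the very definition of $B^{(\lambda,\underline{\mu})}$ gives the decomposition $B^{(\lambda,\underline{\mu})}=\sum_{j=0}^{n-1}\mu_j G_j$, in which each $G_j$ is independent of $\underline{\mu}$. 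The whole lemma then reduces to two facts: that for $\lambda\ge 0$ each $G_j$ is the reproducing kernel of a subspace $\mathcal{H}_j$ of a fixed Hilbert space, these subspaces being mutually orthogonal; and that the scalar kernel $(1-z\bar{w})^{-\lambda}$ is non-negative definite exactly when $\lambda\ge 0$.

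First I would carry out the push-forward computation. Since $\mathcal{H}^{(\lambda)}_n=\bigoplus_j \mathcal{H}^{(\lambda+2j)}$ is a reproducing kernel Hilbert space of $\mathbb{C}^n$-valued functions with diagonal kernel $\mathrm{diag}\big((1-z\bar{w})^{-(\lambda+2j)}\big)$, and $\Gamma^{(\lambda,\underline{\mu})}$ is unitary onto $\mathcal{H}^{(\lambda,\underline{\mu})}$ (injectivity being supplied by \cite{KM}), the evaluation functional $g\mapsto\langle g(w),\xi\rangle$ on $\mathcal{H}^{(\lambda,\underline{\mu})}$ pulls back to the functional $f\mapsto \langle (\Gamma^{(\lambda,\underline{\mu})}f)(w),\xi\rangle$ on $\mathcal{H}^{(\lambda)}_n$. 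Feeding the defining formula for $\Gamma^{(\lambda,\underline{\mu})}$ into this, and using the reproduction of derivatives $f^{(k)}(w)=\langle f,\partial_{\bar{w}}^k K_\rho(\cdot,w)\rangle$ in each summand $\mathcal{H}^{(\rho)}$ with $\rho=\lambda+2j$ and $K_\rho(z,w)=(1-z\bar w)^{-\rho}$, the Riesz representative of the pulled-back functional becomes explicit; applying $\Gamma^{(\lambda,\underline{\mu})}$ to it returns precisely the stated formula for $B^{(\lambda,\underline{\mu})}$. This proves the final assertion of the lemma at one stroke, and, run on the single summand $\mathcal{H}^{(\lambda+2j)}$ with $\underline{\mu}=\underline{1}$, it shows that $\mathcal{H}_j:=\Gamma^{(\lambda,\underline{1})}(\mathcal{H}^{(\lambda+2j)})$ is a subspace of $\mathcal{H}^{(\lambda,\underline{1})}$ with reproducing kernel $G_j$. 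These subspaces are mutually orthogonal and together span $\mathcal{H}^{(\lambda,\underline{1})}$ because $\Gamma^{(\lambda,\underline{1})}$ is unitary and the summands of $\mathcal{H}^{(\lambda)}_n$ are orthogonal.

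With this in hand both implications are short. For the ``only if'' direction, assume $B^{(\lambda,\underline{\mu})}$ is non-negative definite. Restricting to the first coordinate (i.e.\ testing against $\xi=(1,0,\dots,0)$) shows that its $(0,0)$ entry, the scalar kernel $\mu_0(1-z\bar{w})^{-\lambda}$, is non-negative definite; since $\mu_0>0$ and a two-point Gram computation (at $z\in\{0,r\}$) shows $(1-z\bar{w})^{-\lambda}$ has a negative minor when $\lambda<0$, we conclude $\lambda\ge 0$. Once $\lambda\ge 0$ is secured, $\mathcal{H}^{(\lambda,\underline{1})}=\bigoplus_j \mathcal{H}_j$ is a genuine orthogonal decomposition with kernels $G_j$, so Lemma \ref{4.0} applied to $B^{(\lambda,\underline{\mu})}=\sum_j \mu_j G_j$ forces $\mu_j\ge 0$ for every $j$. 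Conversely, if $\lambda\ge 0$ and $\underline{\mu}\ge \underline{0}$, then each $G_j$, being a reproducing kernel, is non-negative definite, and $B^{(\lambda,\underline{\mu})}=\sum_j\mu_j G_j$ is a non-negative combination of such kernels, hence non-negative definite.

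The main obstacle is the bookkeeping in the push-forward computation: one must keep the two index families straight and verify that applying $\Gamma^{(\lambda,\underline{\mu})}$ to the Riesz representative reconstitutes exactly the Pochhammer coefficients $\big[(\lambda+2j)_{\ell-j}(\lambda+2j)_{p-j}\big]^{-1}$ appearing in the statement. A secondary point needing care is the boundary value $\lambda=0$, where $\mathcal{H}^{(0)}$ degenerates to the constants and the denominators $(\lambda)_{\ell}$ vanish for $\ell\ge 1$; there the $j=0$ term must be interpreted (by the convention that, for constants, it contributes only to the $(0,0)$ entry, or equivalently by letting $\lambda\downarrow 0$), and I would dispose of this degenerate case separately.
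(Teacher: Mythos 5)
Your proposal is correct and follows essentially the same route as the paper: the decomposition $B^{(\lambda,\underline{\mu})}=\sum_j \mu_j B^{(\lambda,\underline{e}_j)}$ into kernels of mutually orthogonal subspaces, the Faraut--Kor\'{a}nyi lemma (Lemma \ref{4.0}) to force $\underline{\mu}\geq\underline{0}$, the $(0,0)$ entry to force $\lambda\geq 0$, and non-negativity of a non-negative combination of reproducing kernels for the converse. The only divergence is that you propose to derive the reproducing-kernel identification of $B^{(\lambda,\underline{\mu})}$ with $\mathcal H^{(\lambda,\underline{\mu})}$ by an explicit push-forward through $\Gamma^{(\lambda,\underline{\mu})}$, where the paper simply cites \cite{KM}; your added two-point Gram check for $\lambda<0$ and your flagging of the degenerate denominators at $\lambda=0$ are sound refinements rather than a different argument.
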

(Here $\partial$ and $\bar{\partial}$ denote partial differentiation with respect to $z$ and $\bar{w}$, respectively.) 
\begin{proof}
Suppose $B^{(\lambda, \underline{\mu})}$ is a non-negative definite kernel. Then each diagonal entry in the matrix defining $B^{(\lambda, \underline{\mu})}$ is a scalar valued non-negative definite kernel on $\mathbb D$. In particular, the kernel $(z,w) \mapsto \mu_0 (1-z\bar{w})^{-\lambda}$, being the top left corner entry of $B^{(\lambda, \underline{\mu})}$, is non-negative definite. Since $\mu_0 > 0$, this forces $\lambda \geq 0$.

Let $\underline{e} = \sum_{0 \leq j < n} \underline{e}_j$, where $\{\underline{e}_j: 0 \leq j < n\}$ is the standard basis of $\mathbb R^n$. Then, by the construction of the functional Hilbert space $\mathcal H^{(\lambda, \underline{e})}$, this space is the orthogonal direct sum of $n$ subspaces with reproducing kernels $B^{(\lambda, \underline{e}_j)}$, $0\leq j < n$. Since $B^{(\lambda, \underline{\mu})}= \sum_{0\leq j < n} \mu_j B^{(\lambda, \underline{e}_j)}$ is assumed to be non-negative definite, it therefore follows from Lemma \ref{4.0}  that $\underline{\mu} \geq \underline{0}$. 

For the converse, note that when $\lambda \geq 0$, $\underline{\mu} \geq \underline{0}$, \cite{KM} shows that $B^{(\lambda, \underline{\mu})}$ is the reproducing kernel of $\mathcal H^{(\lambda, \underline{\mu})}$, and hence it is non-negative definite.  \end{proof}

For integers $k \geq 0$ and real $\lambda^\prime \geq \lambda + 2k$, $\lambda \geq 0$, let $\partial^k: \mathcal H^{(\lambda)} \to \mathcal H^{(\lambda^\prime)}$ denote the bounded operator of $k$ times differentiation. The exact domain and co-domain of any occurrence of this operator should be clear from the context.

\begin{lemma} \label{matrix representation of multiplication operator}
For $0\leq j \leq i,$ define the operator $a_{ij}: \mathcal H^{(\lambda+2j)} \to \mathcal H^{(\lambda+2i)}$ by 
$$a_{ij} = \begin{cases}
M^{(\lambda +2i)}& \mbox{\rm if~} i=j,\\
-\frac{(j+1)_{i-j}}{(\lambda+2j)_{2i-2j-1}} \partial^{i-j-1} & \mbox{\rm if~} i > j.
\end{cases}$$

Let $A : \mathcal{H}_{n}^{(\lambda)} \to \mathcal{H}_{n}^{(\lambda)}$ be the operator $ \left(\Gamma^{(\lambda ,\underline{\mu})}\right)^* M^{(\lambda, \underline{\mu})} \Gamma^{(\lambda ,\underline{\mu})}.$ Then  $A$ admits a block decomposition of the form $A=\left( \!\left(A_{ij}\right) \! \right)_{i,j=0}^{n-1}$, where  $A_{ij}=0$ for $i < j$ 
and $A_{ij} = \sqrt{\frac{\mu_j}{\mu_i}} a_{ij}$ for $i \geq j$.
 \end{lemma}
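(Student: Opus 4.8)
The plan is to exploit that $\Gamma:=\Gamma^{(\lambda,\underline{\mu})}$ is injective (\cite{KM}) and that the transplanted inner product makes it a \emph{unitary} from $\mathcal{H}_n^{(\lambda)}$ onto $\mathcal{H}^{(\lambda,\underline{\mu})}$. Hence $\Gamma^*=\Gamma^{-1}$, so the operator $A=\Gamma^*M^{(\lambda,\underline{\mu})}\Gamma$ satisfies $\Gamma A=M^{(\lambda,\underline{\mu})}\Gamma$. Since $M^{(\lambda,\underline{\mu})}$ is multiplication by the coordinate function $z$ acting componentwise on $\mathbb{C}^n$-valued functions, and $\Gamma$ is injective, it suffices to check that the explicit block-lower-triangular operator $B:=(\!(B_{ij})\!)$ with $B_{ij}=0$ for $i<j$ and $B_{ij}=\sqrt{\mu_j/\mu_i}\,a_{ij}$ for $i\geq j$ --- which is bounded because each $\partial^{\,i-j-1}:\mathcal{H}^{(\lambda+2j)}\to\mathcal{H}^{(\lambda+2i)}$ and each $M^{(\lambda+2i)}$ is bounded --- satisfies the functional identity $\Gamma B\underline{f}=\big(z\mapsto z\,(\Gamma\underline{f})(z)\big)$ for all $\underline{f}\in\mathcal{H}_n^{(\lambda)}$. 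By injectivity this forces $B=\Gamma^{-1}M^{(\lambda,\underline{\mu})}\Gamma=A$, which is the claim.

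By linearity I would reduce to $\underline{f}$ supported in a single slot $k$, say $\underline{f}=(0,\dots,f_k,\dots,0)$. Writing the $\ell$th component of $\Gamma B\underline{f}$ via the definition of $\Gamma$ gives a sum over $m$ from $k$ to $\ell$ of $\tfrac{\sqrt{\mu_m}\binom{\ell}{m}}{(\lambda+2m)_{\ell-m}}(B\underline{f})_m^{(\ell-m)}$. The diagonal term $m=k$ contributes $(zf_k)^{(\ell-k)}$, which by the Leibniz rule splits as $z\,f_k^{(\ell-k)}+(\ell-k)f_k^{(\ell-k-1)}$; the first summand reassembles to $z\,(\Gamma\underline{f})_\ell=(M^{(\lambda,\underline{\mu})}\Gamma\underline{f})_\ell$, as required. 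Every remaining contribution (the Leibniz remainder and all off-diagonal terms $m>k$) is a scalar multiple of $f_k^{(\ell-k-1)}$. Since $f_k^{(\ell-k-1)}$ is an arbitrary nonzero holomorphic function, the whole identity collapses to matching these scalar coefficients, i.e. to
\[
\frac{\binom{\ell}{k}(\ell-k)}{(\lambda+2k)_{\ell-k}}=\sum_{m=k+1}^{\ell}\frac{\binom{\ell}{m}\,(k+1)_{m-k}}{(\lambda+2m)_{\ell-m}\,(\lambda+2k)_{2m-2k-1}}.
\]
After the simplifications $\binom{\ell}{m}(k+1)_{m-k}=\ell!/(k!\,(\ell-m)!)$ and $(\lambda+2k)_{2s-1}(\lambda+2k+2s)_{\ell-m}=(\lambda+2k)_{p+s}/(\lambda+2k+2s-1)$ (with $a:=\lambda+2k$, $p:=\ell-k$, $s:=m-k$), followed by dividing through by $(a)_p$, this is equivalent to the clean scalar identity
\[
\frac{1}{(p-1)!}=\sum_{s=1}^{p}\frac{a+2s-1}{(p-s)!\,(a+p)_s}.
\]

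I would prove this last identity by the splitting $a+2s-1=(a+p+s-1)-(p-s)$. Using $(a+p+s-1)/(a+p)_s=1/(a+p)_{s-1}$ for the first piece and $(p-s)/(p-s)!=1/(p-s-1)!$ for the second turns the right-hand side into two sums which, after the reindexings $t=s-1$ and $t=s$, agree term-by-term except for the single surviving term $1/(p-1)!$; thus they telescope to the left-hand side. The main obstacle is purely the combinatorial bookkeeping --- correctly consolidating the binomial and Pochhammer factors to reach the clean identity above --- and not any analytic subtlety, since the boundedness of all blocks and the reduction to a single slot are routine, while once the algebra is arranged the telescoping is immediate.
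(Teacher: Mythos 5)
Your proposal is correct and follows essentially the same route as the paper: both reduce the claim to the intertwining identity $\Gamma^{(\lambda,\underline{\mu})}A = M^{(\lambda,\underline{\mu})}\Gamma^{(\lambda,\underline{\mu})}$, observe by linearity and the Leibniz rule that everything collapses to the single combinatorial identity
$\sum_{i=j+1}^{\ell}\tfrac{(j+1)_{i-j}\binom{\ell}{i}}{(\lambda+2j)_{2i-2j-1}(\lambda+2i)_{\ell-i}}=\tfrac{(\ell-j)\binom{\ell}{j}}{(\lambda+2j)_{\ell-j}}$, and then verify that identity by an elementary rearrangement. The only (immaterial) difference is that the paper reduces to the case $j=0$ and runs a finite induction on partial sums, whereas you normalize the Pochhammer factors and telescope directly; the two computations are interchangeable.
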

\begin{proof}
We verify the equality $\Gamma^{(\lambda, \underline{\mu})} A = M^{(\lambda, \underline{\mu})} \Gamma^{(\lambda ,\underline{\mu})}$, where $A$ is the block operator given in this lemma. 
Note that, for $0\leq \ell < n$ and $f=\oplus_{i=o}^{n-1} f_i \in \mathcal H_n^{(\lambda)}$, the $\ell$th component of $\big ( \Gamma^{(\lambda,\underline{\mu})}A - M^{(\lambda, \underline{\mu})} \Gamma^{(\lambda,\underline{\mu})}\big ) f$ is $\sum_{j=0}^{\ell-1}\alpha_j \sqrt{\mu_j} f_j^{(\ell - j -1)}$, where $\alpha_j$ is the difference between the two sides of \eqref{Identity 1} below.  Therefore to complete the proof it suffices to show that: 
\begin{equation}\label{Identity 1}
\displaystyle \sum_{i = j+1}^l \frac{(j+1)_{i-j} \binom{l}{i}}{(\lambda + 2j)_{2i-2j-1} (\lambda + 2i)_{l-i}} = \frac{(l-j)\binom{l}{j}}{(\lambda + 2j)_{l-j}},\,\,0 \leq j < l.
\end{equation}
Note that it is enough to prove the identity \eqref{Identity 1} for $j = 0.$ The general identity then follows form this special case after the substitutions $\lambda \mapsto \lambda+2j$, $i \mapsto i-j$ and $\ell \mapsto \ell-j$.

Now using  the trivial identity 
$$\frac{1}{(\lambda)_{\ell+k}} - \frac{1}{(
\lambda)_{2k+1} (\lambda+2k+2)_{\ell -k -1}} = \frac{\ell-k-1}{(\lambda)_{\ell+k+1}},\,\, 1 \leq k < \ell,$$
it is easy to prove by finite induction on $k$ that 
$$
\sum_{i=1}^k \frac{i!\binom{\ell}{i}}{(\lambda)_{2i-1} (\lambda+2i)_{\ell - i}} = \frac{\ell}{(\lambda)_\ell} - \frac{(\ell-k)_{k+1}}{(\lambda)_{\ell+k}}
$$
for $1\leq k \leq \ell$.  The $j=0$ case of the  identity \eqref{Identity 1} is just the $k=\ell$ case of the last identity. 
%
\end{proof}

\begin{lemma} \label{matrix representation of inclusion map}
For $0\leq j \leq i$, define the operator $b_{ij}: \mathcal H^{(\lambda+2j)} \to  \mathcal H^{(\lambda+2i +1)}$ by $b_{ij} = \frac{(j+1)_{i-j}}{(\lambda+2j)_{2i-2j}} \partial^{i-j}.$
Let $B : \mathcal{H}_{n}^{(\lambda)} \to \mathcal{H}_{n}^{(\lambda+1)}$ be the operator $\left(\Gamma^{(\lambda + 1,\underline{\mu}^\prime)}\right)^*\boldsymbol{i} \Gamma^{(\lambda ,\underline{\mu})},$ where $\boldsymbol i : \mathcal{H}^{(\lambda, \underline{\mu})} \to \mathcal{H}^{(\lambda+1, \underline{\mu}^\prime)}$ is the inclusion map. Then $B$ admits a block decomposition of the form $B = \left(\! \left(B_{ij}\right) \!\right)_{i,j=0}^{n-1}$ where $B_{ij} = 0$ for $ i < j$, and $B_{ij} = \sqrt{\frac{\mu_j}{\mu^\prime_i}} b_{ij}$ for $i \geq j$.  
\end{lemma}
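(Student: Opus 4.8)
The plan is to follow the pattern of the proof of Lemma~\ref{matrix representation of multiplication operator}. Since $\Gamma^{(\lambda+1, \underline{\mu}^\prime)}$ transplants the inner product of $\mathcal H_n^{(\lambda+1)}$ onto its image $\mathcal H^{(\lambda+1, \underline{\mu}^\prime)}$, it is an isometric isomorphism, so that $\big(\Gamma^{(\lambda+1, \underline{\mu}^\prime)}\big)^*\,\Gamma^{(\lambda+1, \underline{\mu}^\prime)} = I$. Hence, to establish the claimed block form of $B = \big(\Gamma^{(\lambda+1, \underline{\mu}^\prime)}\big)^* \boldsymbol i\, \Gamma^{(\lambda, \underline{\mu})}$, it suffices to verify the intertwining identity $\Gamma^{(\lambda+1, \underline{\mu}^\prime)} B = \boldsymbol i\, \Gamma^{(\lambda, \underline{\mu})}$ for the candidate block operator $B$ of the statement. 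Because $\boldsymbol i$ is the inclusion map, it acts as the identity on functions, so I would compute the $\ell$th component of each side as a holomorphic $\mathbb C^n$-valued function and compare coefficients.

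Substituting $(Bf)_i = \sum_{j\le i}\sqrt{\mu_j/\mu^\prime_i}\,\tfrac{(j+1)_{i-j}}{(\lambda+2j)_{2i-2j}}\,f_j^{(i-j)}$ into the definition of $\Gamma^{(\lambda+1, \underline{\mu}^\prime)}$ and using that the $(\ell-i)$th derivative of $f_j^{(i-j)}$ equals $f_j^{(\ell-j)}$, the factors $\sqrt{\mu^\prime_i}$ cancel and the $\ell$th component collects, for each $j\le\ell$, the coefficient of $\sqrt{\mu_j}\,f_j^{(\ell-j)}$. Matching this with the $\ell$th component of $\boldsymbol i\,\Gamma^{(\lambda,\underline{\mu})}f$ reduces the lemma to the scalar identity
\begin{equation}\label{Bident}
\sum_{i=j}^{\ell} \frac{\binom{\ell}{i}\,(j+1)_{i-j}}{(\lambda+1+2i)_{\ell-i}\,(\lambda+2j)_{2i-2j}} = \frac{\binom{\ell}{j}}{(\lambda+2j)_{\ell-j}}, \qquad 0\le j\le \ell .
\end{equation}
As in the previous lemma, dividing by $\binom{\ell}{j}$ and using $\tfrac{\binom{\ell}{i}}{\binom{\ell}{j}}(j+1)_{i-j} = \binom{\ell-j}{i-j}(i-j)!$ shows that \eqref{Bident} follows from its $j=0$ case after the substitutions $\lambda\mapsto\lambda+2j$, $i\mapsto i-j$, $\ell\mapsto\ell-j$; so it is enough to prove
\begin{equation}\label{Bident0}
\sum_{i=0}^{\ell} \frac{\binom{\ell}{i}\, i!}{(\lambda+1+2i)_{\ell-i}\,(\lambda)_{2i}} = \frac{1}{(\lambda)_{\ell}} .
\end{equation}

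The computational heart is \eqref{Bident0}, and here I expect a telescoping to be cleaner than the two-step induction used before. The key is the rearrangement of Pochhammer symbols $(\lambda)_{2i}\,(\lambda+2i+1)_{\ell-i} = (\lambda)_{\ell+i+1}/(\lambda+2i)$, valid because the two blocks of consecutive factors miss exactly the single factor $(\lambda+2i)$; this rewrites the $i$th summand of \eqref{Bident0} as $\tfrac{1}{(\lambda)_\ell}\cdot\tfrac{\binom{\ell}{i}\,i!\,(\lambda+2i)}{(\lambda+\ell)_{i+1}}$. Setting $T_i := \binom{\ell}{i}\,i!/(\lambda+\ell)_i$ and splitting $\lambda+2i = (\lambda+\ell+i)-(\ell-i)$, one checks (using $\binom{\ell}{i}\,i!\,(\ell-i)=\binom{\ell}{i+1}(i+1)!$) the telescoping relation $\tfrac{\binom{\ell}{i}\,i!\,(\lambda+2i)}{(\lambda+\ell)_{i+1}} = T_i - T_{i+1}$. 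The sum then collapses to $T_0 - T_{\ell+1} = 1 - 0 = 1$, since $T_0=1$ and $T_{\ell+1}=0$ (as $\binom{\ell}{\ell+1}=0$); dividing by $(\lambda)_\ell$ gives \eqref{Bident0}. I expect the only real obstacle to be spotting this telescoping---that is, the Pochhammer rearrangement isolating the missing factor $(\lambda+2i)$ together with the decomposition of $\lambda+2i$---after which everything is routine. Should the telescoping prove elusive, \eqref{Bident0} can instead be obtained by finite induction exactly in the manner of Lemma~\ref{matrix representation of multiplication operator}.
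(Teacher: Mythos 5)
Your proof is correct and follows the paper's proof in all essentials: you verify the intertwining relation $\Gamma^{(\lambda+1,\underline{\mu}^\prime)}B=\boldsymbol i\,\Gamma^{(\lambda,\underline{\mu})}$ for the candidate block operator, reduce to the same scalar identity \eqref{Identity 2}, and reduce that to its $j=0$ case by the same substitution. Your telescoping with $T_i=\binom{\ell}{i}\,i!/(\lambda+\ell)_i$ checks out and is the paper's finite induction in disguise, since the paper's partial sum $\frac{1}{(\lambda)_\ell}-\frac{(\ell-k)_{k+1}}{(\lambda)_{\ell+k+1}}$ is exactly $(T_0-T_{k+1})/(\lambda)_\ell$, so the two arguments coincide up to presentation.
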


\begin{proof}
We verify the equality $\Gamma^{(\lambda +1 ,\underline{\mu}^\prime)} B =\boldsymbol i \Gamma^{(\lambda ,\underline{\mu})}$, where $B$ is the block operator given above. 
Note that, for $0\leq \ell < n$ and $f=\oplus_{j=0}^{n-1} f_j \in \mathcal H_n^{(\lambda)}$, the $\ell$th component of $\big ( \Gamma^{(\lambda+1,\underline{\mu}^\prime)} B - \boldsymbol i \Gamma^{(\lambda,\underline{\mu})}\big ) f$ is $\sum_{j=0}^\ell \beta_j \sqrt{\mu_j} f_j^{(\ell -j)}$, where $\beta_j$ is the difference between the two sides in \eqref{Identity 2} below. Therefore, to complete the proof, it suffices to show that: 
\begin{equation}\label{Identity 2}
\displaystyle \sum_{i = j}^l \frac{(j+1)_{i-j} \binom{l}{i}}{(\lambda + 2j)_{2i-2j} (\lambda + 2i + 1)_{l-i}} = \frac{\binom{l}{j}}{(\lambda + 2j)_{l-j}},\,\,0 \leq j \leq l.
\end{equation}
Note that to prove the identity \eqref{Identity 2}, it is enough to verify it for the case $j = 0$.  The general case follows from its special case $j=0$ on substituting $i\mapsto i-j$, $\ell \mapsto \ell -j$ and $\lambda \mapsto \lambda +2j$. 

Using  the trivial identity 
$$
\frac{1}{(\lambda)_{\ell+k+1}} - \frac{1}{(\lambda)_{2k+2}((\lambda + 2k +3)_{\ell - k -1}} = \frac{\ell-k-1}{(\lambda)_{\ell+k+2}},\,\, 0 \leq k < \ell, 
$$
it is easy to prove by finite induction on $k$ that , for $0\leq k \leq \ell$, 
$$
\sum_{i=0}^k \frac{i! \binom{\ell}{i}}{(\lambda)_{2i} (\lambda+2i+1)_{\ell -i}} = \frac{1}{(\lambda)_\ell} - \frac{(\ell -k)_{k+1}}{(\lambda)_{\ell+k+1}}.$$
The $j=0$ case of \eqref{Identity 2} is just the case $k=\ell$ of this last identity. 
\end{proof}


\begin{remark}\label{boundedness of inclusion}
Note that Lemma \ref{matrix representation of inclusion map} shows that for all $\lambda \geq 0$ and all $\underline{\mu}, \underline{\mu}^\prime \in \mathbb R_+^n$, the  Hilbert space $\mathcal{H}^{(\lambda, \underline{\mu})}$ is contained in 
$\mathcal{H}^{(\lambda+1, \underline{\mu}^\prime)}$, and the corresponding inclusion map is bounded.  Since the polynomials are dense in all these spaces, it follows that $\mathcal{H}^{(\lambda, \underline{\mu})}$ is densely contained in 
$\mathcal{H}^{(\lambda+1, \underline{\mu}^\prime)}$.
\end{remark}

\begin{lemma}\label{contractivity}
The operator $M^{(\lambda, \underline{\mu})}$ is a contraction if and only if $\lambda \geq 1$ and $\frac{\mu_{k+1}}{\mu_k} \geq \frac{(k+1)^2}{(\lambda + 2k - 1)(\lambda + 2k)}$ for $0 \leq k \leq n-2$.
\end{lemma}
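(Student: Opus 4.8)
The plan is to read off contractivity of $M^{(\lambda,\underline\mu)}$ from its reproducing kernel $B^{(\lambda,\underline\mu)}$ and then invoke Lemma \ref{basic}. Since $\Gamma^{(\lambda,\underline\mu)}$ is a unitary onto $\mathcal H^{(\lambda,\underline\mu)}$, the operator $M^{(\lambda,\underline\mu)}$ is multiplication by the coordinate function on the reproducing kernel Hilbert space with kernel $B^{(\lambda,\underline\mu)}$. For any multiplication operator $M$ by the coordinate function on a $\mathbb C^n$-valued reproducing kernel Hilbert space with kernel $K$, the kernel functions satisfy $M^*K(\cdot,w)\xi=\bar w\,K(\cdot,w)\xi$, whence
\[
\big\langle (I-MM^*)K(\cdot,w)\xi,\,K(\cdot,w')\xi'\big\rangle=(1-w'\bar w)\,\big\langle K(w',w)\xi,\xi'\big\rangle .
\]
As the kernel functions are total, $M$ is a contraction if and only if the $\mathbb C^{n\times n}$-valued kernel $(z,w)\mapsto(1-z\bar w)K(z,w)$ is non-negative definite. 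So the whole question becomes: for which $\lambda,\underline\mu$ is $(1-z\bar w)B^{(\lambda,\underline\mu)}$ non-negative definite?

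The key step will be to prove the exact kernel identity
\[
(1-z\bar w)\,B^{(\lambda,\underline\mu)}(z,w)=B^{(\lambda-1,\underline\nu)}(z,w),\qquad \nu_0=\mu_0,\quad \nu_k=\mu_k-\frac{k^2}{(\lambda+2k-3)(\lambda+2k-2)}\,\mu_{k-1}\ \ (1\le k\le n-1).
\]
Granting this, Lemma \ref{basic} applied with $\lambda$ replaced by $\lambda-1$ and $\underline\mu$ by $\underline\nu$ (legitimate since $\nu_0=\mu_0>0$) shows that $(1-z\bar w)B^{(\lambda,\underline\mu)}$ is non-negative definite exactly when $\lambda-1\ge0$ and $\nu_k\ge0$ for all $k$. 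Re-indexing $k=k'+1$ turns $\nu_k\ge0$ into $\tfrac{\mu_{k'+1}}{\mu_{k'}}\ge\tfrac{(k'+1)^2}{(\lambda+2k'-1)(\lambda+2k')}$ for $0\le k'\le n-2$, which together with $\lambda\ge1$ is precisely the assertion of the lemma. The necessity of $\lambda\ge1$ is in fact visible directly, independent of the identity: the top-left entry of $(1-z\bar w)B^{(\lambda,\underline\mu)}$ is $\mu_0(1-z\bar w)^{-(\lambda-1)}$, which, as $\mu_0>0$, is non-negative definite iff $\lambda-1\ge0$.

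It remains to establish the displayed identity, and this is the computational core. Writing $F_s^{a,b}:=\partial^a\bar\partial^b(1-z\bar w)^{-s}$, a Leibniz expansion of $(1-z\bar w)^{-(s-1)}=(1-z\bar w)(1-z\bar w)^{-s}$, combined with the first-order relation $\bar w\,(1-z\bar w)^{-s}=\tfrac{1}{s-1}\partial(1-z\bar w)^{-(s-1)}$ and its conjugate, gives after simplification the single recursion
\[
(1-z\bar w)\,F_s^{a,b}=\frac{s+a+b-1}{s-1}\,F_{s-1}^{a,b}-ab\,F_s^{a-1,b-1}.
\]
Substituting this, with $s=\lambda+2j$, $a=\ell-j$, $b=p-j$, into the entrywise formula for $B^{(\lambda,\underline\mu)}$ expresses each $(\ell,p)$ entry of $(1-z\bar w)B^{(\lambda,\underline\mu)}$ as a combination of ``on-shift'' terms $F_{\lambda-1+2j}^{\ell-j,p-j}$ and ``residual'' terms $F_{\lambda+2j}^{\ell-1-j,p-1-j}$. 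Matching this against the defining formula for $B^{(\lambda-1,\underline\nu)}$, after expanding all the $F$'s in the common basis of powers $(1-z\bar w)^{-m}$, reduces the identity to a single Pochhammer summation identity in the same spirit as \eqref{Identity 1} and \eqref{Identity 2}, which I expect to verify by the finite induction used there. I regard this reorganization as the main obstacle: the residual terms carry an ``even'' index shift and must be absorbed into the ``odd''-shifted terms that constitute $B^{(\lambda-1,\underline\nu)}$; I have checked that this works out exactly, fixing the stated $\underline\nu$, in the case $n=2$. Finally, the degenerate boundary $\lambda=1$ with $n\ge2$ (where the coefficient in $\nu_k$ has a pole) must be handled separately: here one exhibits directly, using the two points $0$ and $r\in(0,1)$, a Gram matrix of $(1-z\bar w)B^{(1,\underline\mu)}$ that fails to be positive, confirming that $M^{(1,\underline\mu)}$ is not a contraction when $n\ge2$.
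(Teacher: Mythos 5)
Your proposal follows essentially the same route as the paper's proof: the criterion that $M^{(\lambda,\underline{\mu})}$ is a contraction iff $(1-z\bar w)B^{(\lambda,\underline{\mu})}$ is non-negative definite, the kernel identity $(1-z\bar w)B^{(\lambda,\underline{\mu})}=B^{(\lambda-1,\underline{\nu})}$ (which is exactly the paper's Equation \eqref{God}, with your $\underline{\nu}$ equal to the paper's $\underline{\mu}^{\prime\!\prime}$ after re-indexing), and then Lemma \ref{basic} applied to the shifted parameters. The paper disposes of the identity with ``a computation shows that,'' so your recursion for $F_s^{a,b}$ --- though fully verified only for $n=2$ --- actually supplies more detail than the paper does, and your separate treatment of the boundary case $\lambda=1$, $n\ge 2$ (where the formula for $\underline{\nu}$ degenerates) flags an edge case the paper silently glosses over.
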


\begin{proof}
Put $\lambda^{\prime\!\prime} = \lambda -1$ and define $\underline{\mu}^{\prime\!\prime} = (\mu_0^{\prime\!\prime}, \ldots \mu_{n-1}^{\prime\!\prime})$ by 
\begin{equation}\label{mudoubleprime} 
\mu_0^{\prime\!\prime} = \mu_0,\, \mu_{k+1}^{\prime\!\prime} = \mu_{k+1} - \frac{(k+1)^2 \mu_{k}}{(\lambda + 2k - 1)(\lambda + 2k)}, \, 0\leq k < n-1.
\end{equation}
Then a computation shows that 
\begin{equation} \label{God}
(1- z\bar{w}) B^{(\lambda, \underline{\mu})}(z,w) = B^{(\lambda^{\prime\!\prime}, \underline{\mu}^{\prime\!\prime})}(z,w). 
\end{equation} 
It is well known that if $\mathcal H$ is a Hilbert space of holomorphic functions on $\mathbb D$ with reproducing kernel $K$, then the multiplication operator $M$ on $\mathcal H$ is a contraction if and only if the kernel $(z,w)\mapsto (1-z\bar{w})K(z,w)$ is non-negative definite.  Therefore, Lemma \ref{basic} implies that $M^{(\lambda,\underline{\mu})}$ is a contraction if and only if $\lambda^{\prime\!\prime}\geq 0, \underline{\mu}^{\prime\!\prime} \geq \underline{0}$.
\end{proof}
Lemma \ref{contractivity} prompts the following definition.
\begin{definition}
The operator $M^{(\lambda, \underline{\mu})}$  is said to be a generic contraction  if $\lambda > 1$ and $\frac{\mu_{k+1}}{\mu_k} > \frac{(k+1)^2}{(\lambda + 2k - 1)(\lambda + 2k)}$ for $0 \leq k \leq n-2.$
\end{definition}

\begin{lemma}\label{existence of unitary U}
Let $M^{(\lambda, \underline{\mu})}$ be a generic contraction.  Let $D$ and $\mathcal D$ be the first defect operator and the first defect space (respectively) of $M^{(\lambda, \underline{\mu})}$. Then there exists  $\underline{\mu}^\prime\in \mathbb R_+^n$  and a unitary operator $U:\mathcal{D} \to \mathcal H^{(\lambda+1, \underline{\mu}^\prime)}$  such that $UD$ is the inclusion map from $\mathcal H^{(\lambda,\underline{\mu})}$ to  $ \mathcal H^{(\lambda+1, \underline{\mu}^\prime)}$. 
\end{lemma}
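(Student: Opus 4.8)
The plan is to reduce the existence of the unitary $U$ to a single operator identity and then to verify that identity in the concrete ``model'' picture furnished by Lemmas \ref{matrix representation of multiplication operator} and \ref{matrix representation of inclusion map}. Write $T=M^{(\lambda,\underline\mu)}$, so that $D^*D=I-T^*T$. A unitary $U:\mathcal D\to\mathcal H^{(\lambda+1,\underline\mu')}$ with $UD=\boldsymbol i$ exists precisely when $\langle Df,Dg\rangle=\langle\boldsymbol i f,\boldsymbol i g\rangle$ for all $f,g$, that is, when
\[
I-T^*T=\boldsymbol i^*\boldsymbol i\quad\text{on }\mathcal H^{(\lambda,\underline\mu)},
\]
for a suitable $\underline\mu'\in\mathbb R_+^n$; I shall call this identity $(\star)$. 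Granting $(\star)$ with $\underline\mu'>0$, the operator $\boldsymbol i^*\boldsymbol i$ is injective (as $\boldsymbol i$ is), hence so is $D$, whence $\mathcal D=\mathcal H^{(\lambda,\underline\mu)}$ and the rule $U(Df):=\boldsymbol i f$ is a well-defined isometry of $\operatorname{ran}D=\mathcal D$ onto $\operatorname{ran}\boldsymbol i$; since $\boldsymbol i$ has dense range by Remark \ref{boundedness of inclusion}, $U$ extends to the required unitary. Equivalently, $(\star)$ asserts that $f\mapsto(Tf,\boldsymbol i f)$ is isometric, i.e. $\|f\|^2_{\lambda+1,\underline\mu'}=\|f\|^2_{\lambda,\underline\mu}-\|zf\|^2_{\lambda,\underline\mu}$ for every $f$. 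Thus the whole lemma comes down to producing a strictly positive weight vector $\underline\mu'$ for which $(\star)$ holds.

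Next I would transport $(\star)$ to the model space $\mathcal H_n^{(\lambda)}$ by conjugating with the unitaries $\Gamma^{(\lambda,\underline\mu)}$ and $\Gamma^{(\lambda+1,\underline\mu')}$. Using $\Gamma\Gamma^*=I$ on each image, conjugation of $I-T^*T$ by $\Gamma^{(\lambda,\underline\mu)}$ yields $I-A^*A$, where $A$ is the block lower-triangular operator of Lemma \ref{matrix representation of multiplication operator}, while conjugation of $\boldsymbol i^*\boldsymbol i$ yields $B^*B$, where $B$ is the block lower-triangular operator of Lemma \ref{matrix representation of inclusion map}. Hence $(\star)$ is equivalent to the purely computational matrix identity
\[
I-A^*A=B^*B\qquad\text{on }\mathcal H_n^{(\lambda)}=\bigoplus_{i=0}^{n-1}\mathcal H^{(\lambda+2i)},
\]
in which every block of $A$ and of $B$ is an explicit scalar multiple of a power $\partial^k$ of the differentiation operator (or of $M^{(\lambda+2i)}$, on the diagonal of $A$), with coefficients built from Pochhammer symbols and from the ratios $\sqrt{\mu_j/\mu_i}$ and $\sqrt{\mu_j/\mu_i'}$.

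Finally I would determine $\underline\mu'$ and check the identity block by block. A first approximation to $\underline\mu'$ comes from the scalar single-shift computation on $\mathcal H^{(\lambda+2i)}$: there $I-M^{(\lambda+2i)*}M^{(\lambda+2i)}$ reproduces, up to the factor $\tfrac{\lambda+2i-1}{\lambda+2i}$, the form induced by the inclusion $\mathcal H^{(\lambda+2i)}\hookrightarrow\mathcal H^{(\lambda+2i+1)}$, which suggests the ratios $\mu_i'/\mu_i$ (up to one harmless overall scale). With $\underline\mu'$ so chosen, the remaining (off-diagonal) blocks of $I-A^*A=B^*B$ reduce, after expanding against the monomial bases of the spaces $\mathcal H^{(\lambda+2i)}$ and computing the adjoints of the differentiation blocks, to closed-form Pochhammer/binomial summations of exactly the kind already established in \eqref{Identity 1} and \eqref{Identity 2}. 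Strict positivity $\underline\mu'\in\mathbb R_+^n$ I would then read off from the generic hypotheses $\lambda>1$ and $\mu_{k+1}/\mu_k>(k+1)^2/((\lambda+2k-1)(\lambda+2k))$, which keep each denominator $\lambda+2i-1$ and each extracted ratio strictly positive.

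I expect the block-by-block verification of $I-A^*A=B^*B$ to be the main obstacle. Because $A$ and $B$ are only lower-triangular, both $A^*A$ and $B^*B$ are full Hermitian block matrices, so each block equation couples several summands and must be collapsed to a single Pochhammer identity; moreover the adjoints $(\partial^k)^*$ between the various $\mathcal H^{(\lambda+2i)}$ have to be computed and fed into these sums. The real content is the consistency of the resulting, a priori over-determined, system: a single weight vector $\underline\mu'$ must satisfy all blocks simultaneously. That this is possible ultimately reflects the M\"obius homogeneity of $M^{(\lambda,\underline\mu)}$ --- equivalently, that the representation $\pi$ attached to $\mathcal D$ in Theorem \ref{tworeps} is again a holomorphic, multiplicity-free representation, which forces $\mathcal D$ to coincide with one of the spaces $\mathcal H^{(\lambda+1,\underline\mu')}$.
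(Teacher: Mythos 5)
Your outline follows the paper's proof almost step for step: reduce the existence of $U$ to the identity $\boldsymbol i^*\boldsymbol i = I - M^*M$ (using that $\boldsymbol i$ has dense range to extend the isometry $Dh\mapsto \boldsymbol i h$ to a unitary), conjugate by the $\Gamma$'s to turn this into $B^*B = I - A^*A$ for the block operators of Lemmas \ref{matrix representation of multiplication operator} and \ref{matrix representation of inclusion map}, and then verify block by block via Pochhammer identities checked on monomials. That is exactly the paper's route.

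The one place your plan as written would go wrong is the determination of $\underline\mu'$. Reading the weights off the diagonal scalar computation alone would give $\tfrac{1}{\mu_k'}=\tfrac{\lambda+2k-1}{\lambda+2k}\tfrac{1}{\mu_k}$ for every $k$, and that choice does not satisfy the full system: because $A$ and $B$ are lower triangular, the $(k,k)$ block of $A^*A$ receives a contribution not only from $A_{kk}$ but also from the first sub-diagonal block $A_{k+1,k}=-\sqrt{\mu_k/\mu_{k+1}}\,\tfrac{k+1}{\lambda+2k}\,\partial^{0}$, and similarly for $B^*B$. The correct weights, Equation \eqref{***} of the paper, are
\[
\tfrac{1}{\mu_k'} \;=\; \tfrac{\lambda+2k-1}{\lambda+2k}\,\tfrac{1}{\mu_k}\;-\;\Bigl(\tfrac{k+1}{\lambda+2k}\Bigr)^{2}\tfrac{1}{\mu_{k+1}}\qquad(0\le k\le n-2),
\]
with only the last index $k=n-1$ given by the diagonal ratio. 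This matters for more than bookkeeping: positivity of these $\mu_k'$ is precisely equivalent to the strict genericity inequality $\mu_{k+1}/\mu_k>(k+1)^2/\bigl((\lambda+2k-1)(\lambda+2k)\bigr)$, whereas your "diagonal" weights would be positive from $\lambda>1$ alone — a sign that they cannot be the right answer, since the lemma genuinely uses genericity. Once the correct $\underline\mu'$ is in place, your concern about the over-determined system is resolved as in the paper: after substituting \eqref{***} both sides of each block equation become linear combinations of the $1/\mu_i$, and equating coefficients reduces everything to three families of identities among the $a_{ij}$, $b_{ij}$ checked on the monomial basis.
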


\begin{proof} 
Let us write $M$ for $M^{(\lambda, \underline{\mu})}$ and $\boldsymbol i:\mathcal H^{(\lambda, \underline{\mu})} \to \mathcal H^{(\lambda+1, \underline{\mu}^\prime)}$ be the inclusion map. Since 
$\boldsymbol i$ has a dense range, it suffices to show that the map $U:Dh \mapsto \boldsymbol i h$ ($h\in \mathcal H^{(\lambda,\underline{\mu})}$) preserves inner product (and hence is well defined) for suitable choice of $\underline{\mu}^\prime$. That is, we must show $\boldsymbol{i}^*\boldsymbol{i}=D^*D\, (= I - M^*M)$.
In view of Lemma \ref{matrix representation of multiplication operator} and Lemma \ref{matrix representation of inclusion map}, it suffices to show that 
$B^*B = I - A^*A$.  Fix indices $0\leq j \leq k < n$. Equating the $(j,k)$th blocks of the two sides, we see that we must prove: 
$$
\sum_{i} B_{ij}^* B_{ik} = \delta_{jk} I - \sum_i A_{ij}^* A_{ik}.
$$

Substituting the formulae for these blocks from Lemma \ref{matrix representation of multiplication operator} and \ref{matrix representation of inclusion map}, we are reduced to proving
$$
\sum_{k \leq i < n} \tfrac{1}{\mu^\prime_i} b_{ij}^* b_{ik} = \tfrac{\delta_{jk}}{\mu_k} I - \sum_{k\leq i < n} \tfrac{1}{\mu_i} a_{ij}^* a_{ik},\,\, 0 \leq j \leq k < n.
$$
Because of the genericity assumption on $M$, we may choose $\underline{\mu}^\prime\in \mathbb R^n_+$ given by 
\begin{equation}\label{***}
\frac{1}{\mu_k^\prime} = \begin{cases} \tfrac{\lambda+2k -1}{\lambda+2k} \tfrac{1}{\mu_k} - \Big (\tfrac{k+1}{\lambda+2k} \Big)^2 \tfrac{1}{\mu_{k+1}}, & 0\leq k \leq n- 2, \\ \tfrac{\lambda + 2k - 1}{\lambda+2k} \tfrac{1}{\mu_k}, & k=n-1.  \end{cases} 
\end{equation}
Substituting these values of $\underline{\mu}^\prime$ in the last equation, we see that, in order to show that this choice of $\underline{\mu}^\prime$ works, we need to prove:
\begin{multline*}
\tfrac{1}{\mu_k} \frac{\lambda+2k -1}{\lambda + 2k} b_{kj}^* b_{kk} + \sum_{k < i < n} \tfrac{1}{\mu_i} \Big ( \frac{\lambda+2i -1}{\lambda+2i} b_{ij}^* b_{ik} - \big ( \frac{i}{\lambda + 2i -2}\big )^2b^*_{i-1,j} b_{i-1,k} \Big ) \\  = \tfrac{1}{\mu_k} \big (\delta_{jk} I - a_{kj}^* a_{kk} \big ) - \sum_{k < i < n} \tfrac{1}{\mu_i} a_{i j}^* a_{i k}. \hskip 7em
\end{multline*}
Note that both sides of this equation are linear combinations of $\tfrac{1}{\mu_i}$, $k \leq i < n$, with operator valued coefficients. Therefore, equating corresponding coefficients, we find that in order to complete the proof we must show that the operators $a_{ij}$, $b_{ij}$ defined in Lemma \ref{matrix representation of multiplication operator} and \ref{matrix representation of inclusion map} satisfy: 
\begin{align*}
a_{kk}^* a_{kk} &= I - \frac{\lambda+2k -1}{\lambda+2k} b_{kk}^*b_{kk},\,\, k \geq 0\\
a_{kj}^* a_{kk} &= - \frac{\lambda+2k -1}{\lambda+2k} b_{kj}^* b_{kk},\,\, 0 \leq j < k,\\
a_{ij}^*a_{ik} &= \big (\tfrac{i}{\lambda+2i-2}\big )^2 b_{i-1, j}^* b_{i-1,k} - \frac{\lambda+2i -1}{\lambda+2i} b_{ij}^* b_{ik},\,\, 0 \leq j \leq k < i.
\end{align*}
For integers $p \geq 0$, let $h_p\in \mathcal H^{(\lambda +2k)}$ be the function defined by $h_p: z \mapsto z^p.$ Since, up to suitable scalar factors, these vectors form an orthonormal basis of $\mathcal H^{(\lambda + 2k)}$, to verify the operator identities given above it suffices to note that both sides map each fixed $h_p$ into the same vector. We omit the elementary details of this verification. 
\end{proof}

\begin{lemma}\label{existence of unitary V}
Let $M^{(\lambda, \underline{\mu})}$ be a generic  contraction. Let $D_*$ and $\mathcal D_*$ denote the second defect operator and the second defect space (respectively) of $M^{(\lambda, \underline{\mu})}$. 
Then there is a $\underline{\mu}^{\prime\!\prime} \in \mathbb R^n_+$ and a unitary operator $V : \mathcal D_* \to \mathcal{H}^{(\lambda-1, \underline{\mu}^{\prime\!\prime})}$  such that $V D_*$ is the adjoint of the   inclusion map from $\mathcal{H}^{(\lambda-1, \underline{\mu}^{\prime\!\prime)}}$ to $\mathcal{H}^{(\lambda, \underline{\mu})}.$ 
\end{lemma}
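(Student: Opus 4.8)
The plan is to treat this as the second-defect-space mirror of Lemma \ref{existence of unitary U}, but to exploit a feature unavailable in that lemma: whereas the first defect operator involves $I-M^*M$ (on which the coordinate multiplication $M$ does not act simply), here the relevant operator is $D_*^*D_* = I-MM^*$, and $M^*$ acts on the reproducing kernel functions as a scalar. This lets me bypass the matrix computations of Lemma \ref{existence of unitary U} and argue directly at the level of kernels. Writing $M = M^{(\lambda,\underline{\mu})}$, $\mathcal{H} = \mathcal{H}^{(\lambda,\underline{\mu})}$ with reproducing kernel $B^{(\lambda,\underline{\mu})}$ (Lemma \ref{basic}), and $\mathcal{D}_* = \overline{\operatorname{ran} D_*}$, I take $\underline{\mu}^{\prime\!\prime}$ to be exactly the tuple defined in \eqref{mudoubleprime} and set $\mathcal{L} := \mathcal{H}^{(\lambda-1,\underline{\mu}^{\prime\!\prime})}$. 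Since $M$ is a \emph{generic} contraction, the inequalities of Lemma \ref{contractivity} are strict, so $\lambda-1>0$ and $\underline{\mu}^{\prime\!\prime}\in\mathbb{R}^n_+$; thus $\mathcal{L}$ is a genuine (non-degenerate) space, and by Remark \ref{boundedness of inclusion} it is densely and boundedly contained in $\mathcal{H}$. Let $\boldsymbol{i}_* : \mathcal{L} \to \mathcal{H}$ denote this inclusion.

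The key step is to show $\boldsymbol{i}_*\boldsymbol{i}_*^* = D_*^*D_* = I-MM^*$. Denoting by $K_w\xi := B^{(\lambda,\underline{\mu})}(\cdot,w)\xi$ the kernel functions of $\mathcal{H}$ (for $w\in\mathbb{D}$, $\xi\in\mathbb{C}^n$), I will use that these are eigenvectors of $M^*$, namely $M^*(K_w\xi) = \bar{w}\,K_w\xi$. A one-line computation then gives $\langle (I-MM^*)K_w\xi, K_z\eta\rangle = (1-z\bar{w})\langle B^{(\lambda,\underline{\mu})}(z,w)\xi,\eta\rangle$, which by the identity \eqref{God} equals $\langle B^{(\lambda-1,\underline{\mu}^{\prime\!\prime})}(z,w)\xi,\eta\rangle$; writing $L := B^{(\lambda-1,\underline{\mu}^{\prime\!\prime})}$ for the reproducing kernel of $\mathcal{L}$, this is $\langle L(z,w)\xi,\eta\rangle$. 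On the other hand, because $\boldsymbol{i}_*$ is the inclusion one checks directly that $\boldsymbol{i}_*^*(K_w\xi) = L(\cdot,w)\xi$ (the kernel function of $\mathcal{L}$), whence $\langle \boldsymbol{i}_*\boldsymbol{i}_*^* K_w\xi, K_z\eta\rangle = \langle L(z,w)\xi,\eta\rangle$ as well. Since the $K_w\xi$ are total in $\mathcal{H}$ and both operators are bounded, this proves $\boldsymbol{i}_*\boldsymbol{i}_*^* = I-MM^*$.

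With this identity in hand, I define $V$ on the (dense) range of $D_*$ by $V(D_*x) := \boldsymbol{i}_*^* x$. It is isometric because $\langle V D_* x, V D_* y\rangle_{\mathcal{L}} = \langle \boldsymbol{i}_*\boldsymbol{i}_*^* x, y\rangle_{\mathcal{H}} = \langle D_*^*D_* x, y\rangle = \langle D_* x, D_* y\rangle$, which also shows it is well defined; its range contains every $\boldsymbol{i}_*^*(K_w\xi) = L(\cdot,w)\xi$ and is therefore dense in $\mathcal{L}$. Hence $V$ extends to a unitary $\mathcal{D}_* \to \mathcal{L} = \mathcal{H}^{(\lambda-1,\underline{\mu}^{\prime\!\prime})}$ satisfying $V D_* = \boldsymbol{i}_*^*$, which is the assertion. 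The only real care needed — and the one point where genericity (rather than mere contractivity) is used — is in guaranteeing that $\underline{\mu}^{\prime\!\prime}$ is \emph{strictly} positive so that $\mathcal{L}$ is an $n$-component space to which Remark \ref{boundedness of inclusion} applies; everything else is forced by \eqref{God}. Alternatively, one could mimic Lemma \ref{existence of unitary U} verbatim, reducing via Lemmas \ref{matrix representation of multiplication operator} and \ref{matrix representation of inclusion map} to a matrix identity $B_*B_*^* = I - AA^*$ and then to Pochhammer identities; I prefer the kernel argument above since, unlike for the first defect space, the structure of $M^*$ makes it essentially automatic.
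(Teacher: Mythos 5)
Your proof is correct and follows essentially the same route as the paper's: the same choice of $\underline{\mu}^{\prime\!\prime}$ from \eqref{mudoubleprime}, the identity \eqref{God} applied to the kernel functions (which are eigenvectors of $M^*$), and totality of those kernel functions to pass from the resulting inner-product identity to the unitary $V$. Packaging the key computation as the operator identity $\boldsymbol{i}_*\boldsymbol{i}_*^* = I-MM^* = D_*^*D_*$ rather than as inner-product preservation of the map $D_*B^{(\lambda,\underline{\mu})}(\cdot,w)\zeta \mapsto B^{(\lambda-1,\underline{\mu}^{\prime\!\prime})}(\cdot,w)\zeta$ is only a cosmetic rearrangement of the same argument.
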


\begin{proof}
In view of the genericity assumption, $\lambda-1 > 0$ and there exists ${\mu}_i^{\prime\!\prime} > 0$, $0 \leq i \leq n-1$, given by \eqref{mudoubleprime}.
We claim that this choice of $\underline{\mu}^{\prime\!\prime}$ works. 

The sets  $X:= \big \{ B^{(\lambda, \underline{\mu})}(\cdot,w)\zeta:w\in \mathbb D, \zeta\in \mathbb C^n\big \}$ and $Y:= \big \{ B^{(\lambda-1, \underline{\mu}^{\prime\!\prime})}(\cdot,w)\zeta:w\in \mathbb D, \zeta\in \mathbb C^n\big \}$ are total in $\mathcal H^{(\lambda,\underline{\mu})}$ and $\mathcal{H}^{(\lambda-1, \underline{\mu}^{\prime\!\prime})}$ respectively. 
Since by definition, $D_*$ has dense range, it follows that $D_*(X)$ is total in $\mathcal D_*$. The equation  \eqref{God} implies that the map from $D_*(X)$ onto $Y$, given by 
$$
D_*B^{(\lambda, \underline{\mu})}(\cdot,w)\zeta \mapsto B^{(\lambda-1, \underline{\mu}^{\prime\!\prime})}(\cdot,w)\zeta,
$$
preserves inner product. Therefore it extends to a unitary $V$ from $\mathcal D_*$ onto  $\mathcal{H}^{(\lambda-1, \underline{\mu}^{\prime\!\prime})}$. We have 
\begin{align*}
(V D_*)^* B^{(\lambda-1, \underline{\mu}^{\prime\!\prime})}  (\cdot,w)\zeta & =
D_*^*V^*(B^{(\lambda-1, \underline{\mu}^{\prime\!\prime})} (\cdot,w)\zeta)\\
&= D_*^*D_*(B^{(\lambda, \underline{\mu})} (\cdot,w)\zeta)\\
&=B^{(\lambda-1, \underline{\mu}^{\prime\!\prime})} (\cdot,w)\zeta.
\end{align*}
Here the last equality follows again by Equation \eqref{God}. Thus, $(VD_*)^*$ agrees on the total subset $Y$ of $\mathcal{H}^{(\lambda-1, \underline{\mu}^{\prime\!\prime})}$ with the inclusion map from $\mathcal{H}^{(\lambda-1, \underline{\mu}^{\prime\!\prime})}$ to $\mathcal H^{(\lambda, \underline{\mu})}$. Therefore, $V D_*$ is the adjoint of this inclusion.  
\end{proof}

\begin{remark}\label{quasi invertibility}
Lemma \ref{existence of unitary U}  implies that the defect operator $D$ has trivial kernel. In other words, the generic contractions $M^{(\lambda, \underline{\mu})}$ 
are pure contractions.  In consequence, they are cnu contractions. Therefore, the theory developed in the previous sections applies to them.  Since $\mathcal H^{(\lambda - 1, \underline{\mu}^{\prime\!\prime})}$ is densely contained in $\mathcal H^{(\lambda, \underline{\mu})}$, Lemma \ref{existence of unitary V} implies that $D_*$ also has trivial kernel. Therefore, the adjoint of the generic contraction $M^{(\lambda, \underline{\mu})}$ is also a pure contraction. In consequence, for a generic contraction $M=M^{(\lambda, \underline{\mu})}$, the operators $(I-M^*M)^{1/2}$ and $(I-MM^*)^{1/2}$ have trivial kernels. Since these operators are self adjoint, it follows that they have dense range. Hence, the defect spaces of this generic contraction are $\mathcal D = \mathcal H^{(\lambda, \underline{\mu})}=\mathcal D_*$.
\end{remark}

\begin{lemma} \label{realization left and right representations}
Let $\pi, \pi_*$ be the representations of \Mob occurring in the product formula for the characteristic function of the generic contraction $M^{(\lambda, \underline{\mu})}$.  Also, let $\underline{\mu}^\prime$, $\underline{\mu}^{\prime\!\prime}$, $U, V,$ be as in Lemma \ref{existence of unitary U} and Lemma \ref{existence of unitary V}. Then $U \pi(\varphi) U^* =  D^{(\lambda+1, \underline{\mu}^\prime)}( \varphi )$ and $V \pi_*(\varphi) V^* =  D^{(\lambda-1, \underline{\mu}^{\prime\!\prime)}}(\varphi)$, $\varphi \in \mbox{M\"{o}b}$.
\end{lemma}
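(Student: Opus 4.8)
The plan is to substitute the concrete data of the generic contraction into the universal formulae for the two companions from Theorem~\ref{tworeps}, conjugate by the unitaries $U,V$ of Lemmas~\ref{existence of unitary U} and~\ref{existence of unitary V}, and reduce the two assertions to a transformation law of reproducing kernels under consecutive members of the holomorphic discrete series. First I would fix $T=M^{(\lambda,\underline{\mu})}$, $\sigma=D^{(\lambda,\underline{\mu})}$ and $\mathcal H=\mathcal H^{(\lambda,\underline{\mu})}$; by Remark~\ref{quasi invertibility} this is a cnu contractive associator, so Theorem~\ref{tworeps} applies. Since $T$ is multiplication by the coordinate $z$ and $c(\varphi,\cdot)$ is analytic and non-vanishing on a neighbourhood of $\overline{\mathbb D}\supseteq\mathrm{sp}(T)$, the holomorphic functional calculus gives $c(\varphi,T)=M_{c(\varphi,\cdot)}$, multiplication by the scalar function $c(\varphi,\cdot)$. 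By Lemma~\ref{existence of unitary U}, $U$ is unitary and $UD=\boldsymbol i$ is the dense-range inclusion $\mathcal H^{(\lambda,\underline{\mu})}\hookrightarrow\mathcal H^{(\lambda+1,\underline{\mu}')}$. Applying $U$ to $\pi(\varphi)D=m_0(\varphi,\varphi^{-1})D\,\sigma(\varphi)\,c(\varphi,T)^{-1}$ and using $UD=\boldsymbol i$ shows that $U\pi(\varphi)U^{*}=D^{(\lambda+1,\underline{\mu}')}(\varphi)$ is equivalent, on the dense range of $\boldsymbol i$, to
\[
D^{(\lambda+1,\underline{\mu}')}(\varphi)\,\boldsymbol i=m_0(\varphi,\varphi^{-1})\,\boldsymbol i\,D^{(\lambda,\underline{\mu})}(\varphi)\,M_{c(\varphi,\cdot)}^{-1}.
\]
The same manipulation, now using $VD_{*}=\boldsymbol i_{\ast}^{\,*}$ from Lemma~\ref{existence of unitary V} and the $\pi_{*}$ formula (so that $\big(c(\varphi,T)^{-1}\big)^{*}=(M_{c(\varphi,\cdot)}^{*})^{-1}$ appears), reduces $V\pi_{*}(\varphi)V^{*}=D^{(\lambda-1,\underline{\mu}'')}(\varphi)$ to
\[
D^{(\lambda-1,\underline{\mu}'')}(\varphi)\,\boldsymbol i_{\ast}^{\,*}=m_0(\varphi,\varphi^{-1})\,\boldsymbol i_{\ast}^{\,*}\,D^{(\lambda,\underline{\mu})}(\varphi)\,(M_{c(\varphi,\cdot)}^{*})^{-1}.
\]

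The engine of both verifications is the scalar weight-shift identity
\[
D_{\mu+1}^{+}(\varphi)\,M_{c(\varphi,\cdot)}=m_0(\varphi,\varphi^{-1})\,D_{\mu}^{+}(\varphi),
\]
which I would read off from $(D_{\mu}^{+}(\varphi)f)(z)=c(\varphi^{-1},z)^{\mu}f(\varphi^{-1}(z))$ together with $c(\varphi^{-1},z)\,c(\varphi,\varphi^{-1}(z))=m_0(\varphi,\varphi^{-1})$ (the case $\varphi_1=\varphi,\varphi_2=\varphi^{-1}$ of Equation~\eqref{pm 1}). To promote this to the two displayed operator identities I would test them on the total set of kernel vectors $B^{(\lambda,\underline{\mu})}(\cdot,w)\zeta$, taking the adjoint of the first identity so that both are evaluated on kernels. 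Three facts make each side computable: the adjoint inclusions act cleanly on kernels, $\boldsymbol i^{\,*}B^{(\lambda+1,\underline{\mu}')}(\cdot,w)\zeta=B^{(\lambda,\underline{\mu})}(\cdot,w)\zeta$ and $\boldsymbol i_{\ast}^{\,*}B^{(\lambda,\underline{\mu})}(\cdot,w)\zeta=B^{(\lambda-1,\underline{\mu}'')}(\cdot,w)\zeta$; the operator $(M_{c(\varphi,\cdot)}^{*})^{-1}$ acts on a kernel at $w$ by the scalar $\overline{c(\varphi,w)}^{-1}$; and $D^{(\lambda,\underline{\mu})}(\varphi)$ transports the eigenvectors $B^{(\lambda,\underline{\mu})}(\cdot,w)\zeta$ of $(M^{(\lambda,\underline{\mu})})^{*}$ according to the covariance law of the kernel, valid because $D^{(\lambda,\underline{\mu})}$ is associated with $M^{(\lambda,\underline{\mu})}$. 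After these substitutions each identity collapses to matching, at the appropriate M\"obius translate of $w$, the automorphy factors of the covariance law at two consecutive weights, the discrepancy being exactly the single scalar $\overline{c}^{\,\pm1}$ predicted by the weight-shift identity.

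Two simplifications are worth recording. The action of $D^{(\lambda,\underline{\mu})}(\varphi)$ on functions does not depend on $\underline{\mu}$: writing $\Gamma^{(\lambda,\underline{\mu})}=\Gamma^{(\lambda,\underline 1)}\Delta_{\underline{\mu}}$ with $\Delta_{\underline{\mu}}=\mathrm{diag}(\sqrt{\mu_0},\dots,\sqrt{\mu_{n-1}})$, the block-scalar $\Delta_{\underline{\mu}}$ commutes with $\bigoplus_i D_{\lambda+2i}^{+}(\varphi)$, whence $D^{(\lambda,\underline{\mu})}(\varphi)=\Gamma^{(\lambda,\underline 1)}\big(\bigoplus_i D_{\lambda+2i}^{+}(\varphi)\big)(\Gamma^{(\lambda,\underline 1)})^{-1}$. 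This offers an alternative to the kernel computation: apply the scalar identity in each summand of $\mathcal H_n^{(\lambda)}$ and transport through the differential intertwiners $\Gamma^{(\lambda,\underline 1)},\Gamma^{(\lambda+1,\underline 1)}$, whose mutual relation is encoded by the block operator of Lemma~\ref{matrix representation of inclusion map}. Moreover the $\pi_{*}$ statement may instead be deduced from the $\pi$ statement through the companion-duality $T\leftrightarrow T^{*}$ recorded at the beginning of this section, under which the right companion of $T$ is the $\sharp$-twist of the left companion of $T^{*}$.

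I expect the only genuine obstacle to be the $n>1$ bookkeeping: making the covariance law of the matrix-valued kernel $B^{(\lambda,\underline{\mu})}$ explicit, i.e.\ computing the matrix automorphy factor carried by the off-diagonal (derivative) part of $\Gamma^{(\lambda,\underline{\mu})}$, and then checking that this factor at weight $\lambda\pm1$ differs from the one at weight $\lambda$ precisely by the scalar $\overline{c(\varphi,\cdot)}^{\,\pm1}$ furnished by the weight-shift identity. For $n=1$ everything reduces immediately to the scalar engine identity, so the whole content is to show that the triangular intertwiners $U,V$ convert the $c(\varphi,T)^{\mp1}$-twist into exactly a one-unit shift of weight, uniformly across the Jordan-type block structure. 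The fact that $M_{c(\varphi,\cdot)}^{*}$ is not a multiplication operator is exactly why passing to reproducing kernels, where its inverse acts by the scalar $\overline{c(\varphi,w)}^{-1}$, is the right device in the $\pi_{*}$ reduction.
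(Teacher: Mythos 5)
Your proposal is correct and follows essentially the same route as the paper: substitute the formulae for $\pi,\pi_*$ from Theorem \ref{tworeps}, use $UD=\boldsymbol i^+$ and $VD_*=(\boldsymbol i^-)^*$ to transfer everything to the spaces $\mathcal H^{(\lambda\pm 1,\cdot)}$, and reduce both claims to the one-unit weight shift of the matrix automorphy factor, $J^{(\lambda\pm 1)}=c^{\pm 1}J^{(\lambda)}$, combined with $c(\varphi,\varphi^{-1}z)\,c(\varphi^{-1},z)=m_0(\varphi,\varphi^{-1})$ from Equation \eqref{pm 1} --- exactly the two inputs the paper uses (citing the former from \cite{KM}, which is also where your ``$n>1$ bookkeeping'' is discharged). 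The only tactical divergence is in the $\pi_*$ half: the paper first rearranges to $D_*^*\pi_*(\varphi)=m_0(\varphi,\varphi^{-1})\,D^{(\lambda,\underline{\mu})}(\varphi)\,c(\varphi,M^{(\lambda,\underline{\mu})})\,D_*^*$ so that only the multiplication operator (never $(M_{c(\varphi,\cdot)}^*)^{-1}$) appears and then evaluates pointwise, whereas you keep $(M_{c(\varphi,\cdot)}^*)^{-1}$ and test on reproducing-kernel vectors; both devices work, and your observation that the $\underline{\mu}$-dependence drops out of $D^{(\lambda,\underline{\mu})}(\varphi)$ is precisely the paper's remark that $J^{(\lambda)}$ depends only on $\lambda$ and $n$.
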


\begin{proof}
Let $\boldsymbol i^+: \mathcal H^{(\lambda, \underline{\mu})} \to \mathcal H^{(\lambda+1, \underline{\mu}^\prime)}$ and $\boldsymbol i^-: \mathcal H^{(\lambda-1, \underline{\mu}^{\prime\!\prime)}} \to \mathcal H^{(\lambda, \underline{\mu})}$  be the respective inclusion maps. Thus, $\boldsymbol i^+ = UD, \, \boldsymbol i^- = D_*^* V^*$.  

We recall from \cite{KM} that, for $\lambda > 0, \underline{\mu} \in \mathbb R_+^n$, there is a function $J^{(\lambda)}: \mbox{\Mob} \times \mathbb D \to \mathbb C^{n\times n}$ such that 
$$
\big (D^{(\lambda, \underline{\mu})}(\varphi^{-1}) f\big ) (z) = J^{(\lambda)}(\varphi, z) f(\varphi z) 
$$
for $f\in \mathcal H^{(\lambda, \underline{\mu})}, z\in \mathbb D, \varphi \in \mbox{Mob}$. 
The explicit formula for $J^{(\lambda)}$ (available in \cite{KM}) does not concern us here. It suffices to note that $J^{(\lambda)}$ depends only on $\lambda$ and $n$ (and not on $\underline{\mu}$), and it satisfies 
\begin{align*}
J^{(\lambda+1)} (\varphi,z) &= c(\varphi , z) J^{(\lambda)}(\varphi,z), \,\, \lambda > 0, \\
J^{(\lambda-1)} (\varphi,z) &= c(\varphi , z)^{-1} J^{(\lambda)}(\varphi,z), \,\,\lambda > 1.
\end{align*}
Using the formula for $\pi$ from Theorem \ref{tworeps} (with $\sigma = D^{(\lambda, \underline{\mu})}, T = M^{(\lambda , \underline{\mu})}$), we get, for $f\in \mathcal H^{(\lambda , \underline{\mu})}$, 
\begin{align*}
(U\pi(\varphi)U^*)(\boldsymbol i^+ f) &= U \pi(\varphi) D f \\
&= m_0(\varphi, \varphi^{-1})UD D^{(\lambda, \underline{\mu})}(\varphi) \big  (c(\varphi, M^{(\lambda, \underline{\mu})} )^{-1}f\big )\\
&= m_0(\varphi, \varphi^{-1}) \boldsymbol i^+ D^{(\lambda, \underline{\mu})}(\varphi) \big  (c(\varphi, M^{(\lambda, \underline{\mu})} )^{-1}f\big )\\
&= D^{(\lambda+1, \underline{\mu}^\prime)}(\varphi) (\boldsymbol i^+ f).
\end{align*}
Since $\boldsymbol i^+$ has dense range (see Remark \ref{boundedness of inclusion}), this shows that $U\pi(\varphi)U^*= D^{(\lambda+1, \underline{\mu}^\prime)}( \varphi )$. (Here, for the last equality in the above string, we have used the identity $c(\varphi, \varphi^{-1} z))^{-1} = m_0(\varphi, \varphi^{-1}) c(\varphi^{-1},z)$ from Equation \eqref{pm 1} and the relation between $J^{(\lambda)}$ and $J^{(\lambda+1)}$ noted above.)

Next, the formula for $\pi_*$ from Theorem \ref{tworeps}  (with $\sigma = D^{(\lambda, \underline{\mu})}, T = M^{(\lambda, \underline{\mu})}$) may be manipulated to yield $\pi_*(\varphi)^* D_* = m_0(\varphi, \varphi^{-1}) D_* c(\varphi, {M^{(\lambda, \underline{\mu})}})^* D^{(\lambda, \underline{\mu})}(\varphi)^*$. Hence, taking adjoints, we  get 
$$D_*^* \pi_*(\varphi) = m_0(\varphi, \varphi^{-1}) D^{(\lambda, \underline{\mu})}(\varphi) c(\varphi, M^{(\lambda, \underline{\mu})} ) D_*^*.$$
Hence we have for $g \in \mathcal H^{(\lambda - 1, \underline{\mu}^{\prime\!\prime})},$ $z\in \mathbb D$,
\begin{align*}
(V\pi_*(\varphi)V^*g)(z) &= (\boldsymbol i^- V\pi_*(\varphi) V^* g)(z) \\
&= (D_*^*\pi_*(\varphi)V^*g)(z)\\
&= m_0(\varphi, \varphi^{-1}) \big ( D^{(\lambda, \underline{\mu})}(\varphi)c(\varphi, M^{(\lambda, \underline{\mu})}) D_*^* V^* g\big )(z)\\
&= m_0(\varphi, \varphi^{-1}) \big ( D^{(\lambda, \underline{\mu})}(\varphi)c(\varphi, M^{(\lambda, \underline{\mu})}) \boldsymbol i^- g\big )(z)\\
&=  (D^{(\lambda-1, \underline{\mu}^{\prime\!\prime})}(\varphi) g) (z).\end{align*} 
Thus $V\pi_*(\varphi)V^* =  D^{(\lambda-1, \underline{\mu}^{\prime\!\prime})}(\varphi)$. (Here again, for the last equality in the above string, we have used the identity $c(\varphi, \varphi^{-1} z) = m_0(\varphi, \varphi^{-1}) c(\varphi^{-1}, z)^{-1}$ and the formula for $J^{(\lambda-1)}$ in terms of $J^{(\lambda)}$.)  
\end{proof}

\begin{lemma}\label{computation of C}
Let $M^{(\lambda, \underline{\mu})}$ be a generic contraction. Let $C : \mathcal{H}_n^{(\lambda + 1)} \to \mathcal{H}_n^{(\lambda - 1)}$ be the operator defined by $C = -{\Gamma^{(\lambda - 1, \underline{\mu}^{\prime\!\prime})}}^* V{M^{(\lambda, \underline{\mu})}} U^* \Gamma^{(\lambda + 1, \underline{\mu}^{\prime})}$, where $\underline{\mu}^\prime$, $\underline{\mu}^{\prime\!\prime}$, $U$ and $V$ are as in Lemma \ref{existence of unitary U} and \ref{existence of unitary V}. Then $C=\left(\!\left(x_{jk}(\partial^{k-j+1})^*\right)\!\right)_{0 \leq j, k < n}$ where the real matrix $\left(\!\left(x_{jk}\right)\!\right)_{0 \leq j,k < n}$ is given by   
\[   
x_{jk} = 
     \begin{cases}
       0 & \mbox{\rm if}\,\, j > k + 1,\\
       \vspace*{0.21cm}
       + \frac{ (k + 1)\mu_k}{\sqrt{\mu^\prime_k \mu^{\prime\!\prime}_{k + 1}} (\lambda + 2k - 1)}  & \mbox{\rm if}\,\, j = k + 1,\\
      - \frac{ \sqrt{\mu^{\prime\!\prime}_j} (j + 1)_{k - j}}{\sqrt{\mu_k^\prime} (\lambda + 2j - 1)_{2k-2j +1}} & \mbox{\rm if}\,\, j < k + 1.
     \end{cases}
\]
\end{lemma}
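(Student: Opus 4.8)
The plan is to collapse the five-fold composition defining $C$ into a single block-matrix identity involving only the operators already computed in Lemmas \ref{matrix representation of multiplication operator} and \ref{matrix representation of inclusion map}, and then to verify that identity by the monomial-testing method used there. Write $M=M^{(\lambda,\underline{\mu})}$, let $\boldsymbol{i}^{+}:\mathcal H^{(\lambda,\underline{\mu})}\to\mathcal H^{(\lambda+1,\underline{\mu}^\prime)}$ and $\boldsymbol{i}^{-}:\mathcal H^{(\lambda-1,\underline{\mu}^{\prime\!\prime})}\to\mathcal H^{(\lambda,\underline{\mu})}$ be the inclusion maps, and put
$$A:=\big(\Gamma^{(\lambda,\underline{\mu})}\big)^*M\,\Gamma^{(\lambda,\underline{\mu})},\quad B^{+}:=\big(\Gamma^{(\lambda+1,\underline{\mu}^\prime)}\big)^*\boldsymbol{i}^{+}\Gamma^{(\lambda,\underline{\mu})},\quad B^{-}:=\big(\Gamma^{(\lambda,\underline{\mu})}\big)^*\boldsymbol{i}^{-}\Gamma^{(\lambda-1,\underline{\mu}^{\prime\!\prime})}.$$
Here $A$ is the matrix of Lemma \ref{matrix representation of multiplication operator}, while $B^{+}$ and $B^{-}$ are two instances of Lemma \ref{matrix representation of inclusion map} (the second with base weight $\lambda-1$), so all three are explicit upper-triangular block matrices built from the $a_{ij}$, $b_{ij}$ and powers of $\partial$.

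First I would simplify $V M U^*$. By Lemma \ref{existence of unitary U}, $UD=\boldsymbol{i}^{+}$, so $U^*\boldsymbol{i}^{+}=D$; by Lemma \ref{existence of unitary V}, $VD_\ast=(\boldsymbol{i}^{-})^*$; and by Remark \ref{quasi invertibility} both defect spaces are $\mathcal H^{(\lambda,\underline{\mu})}$, on which $D=(I-M^*M)^{1/2}$ and $D_\ast=(I-MM^*)^{1/2}$. Using the standard defect identity $MD=D_\ast M$, I get, for $h\in\mathcal H^{(\lambda,\underline{\mu})}$,
$$V M U^*\boldsymbol{i}^{+}h=V M D h=V D_\ast M h=(\boldsymbol{i}^{-})^*M h,$$
that is $V M U^*\boldsymbol{i}^{+}=(\boldsymbol{i}^{-})^*M$. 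Multiplying the definition of $C$ on the right by $B^{+}$ and invoking the intertwining relations $\Gamma^{(\lambda+1,\underline{\mu}^\prime)}B^{+}=\boldsymbol{i}^{+}\Gamma^{(\lambda,\underline{\mu})}$ and $\Gamma^{(\lambda,\underline{\mu})}B^{-}=\boldsymbol{i}^{-}\Gamma^{(\lambda-1,\underline{\mu}^{\prime\!\prime})}$ proved inside Lemma \ref{matrix representation of inclusion map}, this yields
$$C B^{+}=-\big(\Gamma^{(\lambda-1,\underline{\mu}^{\prime\!\prime})}\big)^*(\boldsymbol{i}^{-})^*M\,\Gamma^{(\lambda,\underline{\mu})}=-(B^{-})^*A.$$
Since $\boldsymbol{i}^{+}$ (hence $B^{+}$) has dense range by Remark \ref{boundedness of inclusion} and $C$ is bounded, the relation $C B^{+}=-(B^{-})^*A$ pins down $C$ uniquely.

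It then suffices to check that the candidate $\widetilde C:=\big(\!\big(x_{jk}(\partial^{k-j+1})^*\big)\!\big)$, with the stated coefficients, satisfies $\widetilde C B^{+}=-(B^{-})^*A$; this forces $\widetilde C=C$ on the dense range of $B^{+}$ and hence everywhere. One first checks $\widetilde C$ is bounded: its $(j,k)$ block is the adjoint of $\partial^{k-j+1}:\mathcal H^{(\lambda-1+2j)}\to\mathcal H^{(\lambda+1+2k)}$, and as $\lambda+1+2k=(\lambda-1+2j)+2(k-j+1)$ this differentiation is bounded by the convention preceding Lemma \ref{matrix representation of multiplication operator}. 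Comparing $(j,k)$ blocks of $\widetilde C B^{+}=-(B^{-})^*A$ converts the statement into a family of operator identities, each a weighted finite sum $\sum_i x_{ji}(\partial^{i-j+1})^*b_{ik}$ on one side against $-\sum_i (b_{ij})^*a_{ik}$ on the other (with the $\sqrt{\mu},\sqrt{\mu^\prime},\sqrt{\mu^{\prime\!\prime}}$ factors supplied by the block formulas, and the entries of $B^{-}$ taken at base weight $\lambda-1$). Evaluating both sides on the monomials $h_p:z\mapsto z^p$ — on which $\partial^r$, $(\partial^r)^*$, and $M$ all act by explicit scalars determined by the reproducing-kernel norms $\|h_p\|^2$ in each $\mathcal H^{(\nu)}$ — reduces every such operator identity to a scalar Pochhammer identity, to be settled by finite induction exactly as in Lemmas \ref{matrix representation of multiplication operator} and \ref{matrix representation of inclusion map}.

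The hard part will be this last computation. The delicate point is that the three parameter vectors are coupled through \eqref{mudoubleprime}, \eqref{***} and \eqref{God}, so the $\mu$-weights do not cancel trivially; one must carry out the summation over the intermediate index $i$ and recognize that, precisely for the values of $\underline{\mu}^\prime$ and $\underline{\mu}^{\prime\!\prime}$ forced by Lemmas \ref{existence of unitary U} and \ref{existence of unitary V}, the resulting Pochhammer sums telescope into the closed forms $x_{jk}$. No new conceptual ingredient beyond the inductive Pochhammer manipulations already in use is expected, but the bookkeeping across the three weight parameters is where the real labor lies.
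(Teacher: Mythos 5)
Your proposal follows essentially the same route as the paper: reduce $C$ to the block matrices $A$, $B^{\pm}$ of Lemmas \ref{matrix representation of multiplication operator} and \ref{matrix representation of inclusion map} via a single determining identity, then verify the candidate block-by-block on the monomials $h_p$. The only (immaterial) difference is that you derive the mirror identity $CB^{+}=-(B^{-})^{*}A$ and use the dense range of $B^{+}$ for uniqueness, whereas the paper derives $B^{-}C=-A(B^{+})^{*}$ and uses injectivity of $B^{-}$; both versions lead to the same family of Pochhammer identities, which the paper likewise leaves to the reader.
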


\begin{proof}
Let $i^{+} : \mathcal{H}^{(\lambda, \underline{\mu})} \to \mathcal{H}^{(\lambda+1, \underline{\mu}^\prime)}$, $i^- : \mathcal{H}^{(\lambda-1, \underline{\mu}^{\prime\!\prime})} \to \mathcal{H}^{(\lambda, \underline{\mu})}$ be the inclusion maps. Define $A = {\Gamma^{(\lambda, \underline{\mu})}}^* M^{(\lambda, \underline{\mu})} \Gamma^{(\lambda, \underline{\mu})}$, $B^+ =  {\Gamma^{(\lambda+1, \underline{\mu}^\prime)}}^* i^+ \Gamma^{(\lambda, \underline{\mu})}$ and $B^- =  {\Gamma^{(\lambda, \underline{\mu})}}^* i^- \Gamma^{(\lambda-1, \underline{\mu}^{\prime\!\prime})}$. Note that, by Remark \ref{quasi invertibility}, $\mathcal{H}^{(\lambda, \underline{\mu})}$ is the co-domain of both the defect operators of $M^{(\lambda, \underline{\mu})}$, hence it is the domain of both $U$, $V$. This is why the product defining $C$ makes  sense. Since $U$, $V$, $\Gamma^{(\cdot, \cdot)}$ are unitaries, using the formulae $i^+ = UD$, $i^- = D_*^* V^*$, $M^{(\lambda, \underline{\mu})} D^* = D_*^* M^{(\lambda, \underline{\mu})}$, we routinely derive the equation
\begin{equation}\label{eqn 4.12}
B^- C = -A {B^+}^*.
\end{equation}
Note that the $n \times n$ block decompositions of the operators $A$, $B^\pm$ are given by Lemma \ref{matrix representation of multiplication operator} and Lemma \ref{matrix representation of inclusion map}. Since $i^-$ has trivial kernel, so has $B_-$. Therefore, the operator $C$ is uniquely determined by Equation \eqref{eqn 4.12}. Therefore, to complete the proof, it suffices to verify that the block operator $C$ given in the statement of this lemma is a solution to Equation \eqref{eqn 4.12}.

For $0\leq j \leq i$, let $a_{ij}$ be the operator from Lemma \ref{matrix representation of multiplication operator},  let $b^+_{ij}$ be the operator which we called $b_{ij}$ in Lemma \ref{matrix representation of inclusion map} and let $b^-_{ij}$ be the operator obtained from $b^+_{ij}$ by the substitution $\lambda \mapsto \lambda -1$ in its description. For $0\leq j \leq k+1$, let  $c_{jk}: \mathcal H^{(\lambda+2k +1)} \to \mathcal H^{(\lambda+2j-1)}$ be the operator defined by:
$$
c_{jk}=\begin{cases} 
\frac{k+1}{\lambda+2k-1} I & \mbox{\rm ~if ~} j = k+1,\\
- \frac{(j+1)_{k-j}}{(\lambda+2j-1)_{2k-2j+1}} (\partial^{k-j+1})^* & \mbox {\rm ~ if~} j < k+1.
\end{cases}
$$
Thus the blocks of the $n\times n$ block operators $A$, $B^\pm$, $C$ are given by 
\begin{align*}
A_{ij} &= \begin{cases}
0 & \mbox{\rm ~if~} j > i,\\
\sqrt{\frac{\mu_j}{\mu_i}} a_{ij} &\mbox{\rm ~if~} j \leq i.
\end{cases}\\
B_{kj}^+ & =  \begin{cases}
0 &\mbox{\rm ~if~}  j >  k,\\
\sqrt{\frac{\mu_j}{\mu^\prime_k}} b^+_{kj} & \mbox{\rm ~if~}  j \leq k.
\end{cases}\\
B_{ij}^- &=  \begin{cases}       
0 &\mbox{\rm ~if~} j > i, \\
\sqrt{\frac{\mu_j^{\prime\!\prime}}{\mu_i}}b^-_{ij} &\mbox{\rm ~if~}  j \leq i.
\end{cases}\\
C_{jk} &=  \begin{cases}
0 &\mbox{\rm ~if~}  j > k+1,\\
\frac{\mu_k}{\sqrt{\mu_k^\prime \mu_{k+1}^{\prime\!\prime}}}c_{jk} &\mbox{\rm ~if~}  j = k+1,\\
\sqrt{\frac{\mu_j^{\prime\!\prime}}{\mu_k^\prime}} c_{j k} &\mbox{\rm ~if~} j < k+1.
\end{cases}
\end{align*}
Fix indices $0 \leq i , k < n$. To prove the equation \eqref{eqn 4.12}, it suffices to equate the (i,k)th blocks of its two sides. That is, we must show that $\sum_j B_{ij}^- C_{jk} = - \sum_j A_{ij} {B_{kj}^+}^*$. In view of the preceding formulae, this reduces to: 
$$\mu_k \delta_{i > k} b_{i,k+1}^- c_{k+1, k} + \sum_{0 \leq j \leq i\wedge k} \mu_j^{\prime\!\prime} b_{ij}^-c_{jk} = - \sum_{0 \leq j \leq i \wedge k} \mu_j a_{ij} {b_{kj}^+}^*.$$ 
Here we have used the following variation of  the Kronecker delta, $\delta_{i > k} : = \begin{cases} 1 \mbox{\rm ~if~}i > k, \\ 0 \mbox{\rm ~if~} i \leq k\end{cases}$. 
Substituting the formulae  for $\mu_{\mathbf{\cdot}}^{\prime\!\prime}$ from Equation \eqref{mudoubleprime}, this in turn reduces to (with $\ell:= i \wedge k$) 
$$
\mu_\ell \big ( b_{i\ell}^- c_{\ell k} + \delta_{ i > k} b_{i, k+1}^- 
c_{k+1,k}\big )  + \sum_{0\leq j < \ell} \mu_j \big ( b_{ij}^- c_{jk} - \frac{(j+1)^2}{(\lambda + 2j -1)(\lambda+2j)} b_{i,j+1}^-c_{j+1, k}\big ) = - \sum_{0\leq j \leq \ell} \mu_j a_{ij} {b_{kj}^+}^*.
$$
Note that both sides here are linear combinations of $\mu_j$, $0 \leq j \leq i \wedge k$, with operator coefficients.  Equating corresponding coefficients, we see that, to complete the proof, we need to verify the following operator identities: 
\begin{align*}
a_{ii} {b_{ki}^+}^* &= - b_{ii}^- c_{ik}, \,\, 0\leq i \leq k.\\
a_{ik} {b_{kk}^+}^* &= - b_{ik}^- c_{kk} - b_{i,k+1}^- c_{k+1, k}, \,\, 0 \leq k < i.\\
a_{ij} {b_{kj}^+}^* &= \frac{(j+1)^2}{(\lambda+2j-1)(\lambda+2j)} b_{i,j+1}^- c_{j+1,k} - b_{ij}^- c_{jk},\,\, 0 \leq j < i\wedge k.
\end{align*}
This verification may be done as in the proof of Lemma \ref{existence of unitary U}. We omit the details. 
\end{proof}

For $\lambda > 0$, let $M^{(\lambda)}$ be the operator of multiplication by the coordinate function on the Hilbert space $\mathcal H^{(\lambda)}$ with reproducing  kernel $K^\lambda(z,w) = (1-z\bar{w})^{-\lambda}$ defined on $\mathbb D\times \mathbb D$. The operator  ${M^{(\lambda)}}$ is an associator in the class  $B_1^*(\D)$.  Its associated representation is $D_{\lambda}^+$, the holomorphic Discrete series representation of M\"{o}b on the Hilbert space $\mathcal{H}^{(\lambda)}$. These are all the unitarily 
inequivalent associators in $B_1^*(\mathbb D)$ (cf. \cite{Mi}). 

It was shown in \cite[Theorem 3.1]{scal}  that the characteristic function of the homogeneous contraction $M^{(\lambda)},$ $\lambda > 1,$ coincides with the purely contractive holomorphic function $\theta_{\lambda} : \mathbb{D} \longrightarrow \mathcal{B}(\mathcal H^{(\lambda + 1)}, \mathcal H^{(\lambda - 1)}),$ where 
\begin{equation}\label{prod1}
\theta_{\lambda}(z) = \frac{1}{\sqrt{\lambda ( \lambda - 1)}} D_{\lambda - 1}^+(\varphi_{z})^{*}\partial^{*}D_{\lambda + 1}^+(\varphi_{z}).
\end{equation}

Here $\partial : \mathcal H^{(\lambda-1)} \to \mathcal H^{(\lambda + 1)}$ is the derivation: $\partial f = f'.$ 
\begin{theorem}\label{explicit characteristic function}
Let $M^{(\lambda, \underline{\mu})}$ be a generic contraction. Then the characteristic function of $M^{(\lambda, \underline{\mu})}$ coincides with
the function $\theta^{(\lambda, \underline{\mu})}: \mathbb D \to \mathcal B \big ( 
\oplus_{0\leq k < n} \mathcal H^{(\lambda + 2k +1)},  \oplus_{0\leq j < n} \mathcal H^{(\lambda + 2j - 1)}\big )$ given by the formulae
\begin{align*}
\theta^{(\lambda, \underline{\mu})}(z) &= \Big (\bigoplus_{0\leq j < n} D^+_{\lambda + 2j -1} (\varphi_z)^*\Big ) C \Big ( \bigoplus_{0\leq k < n} D^+_{\lambda +2k + 1}(\varphi_z)\Big )\\
&= \big (\!\!\big ( \theta_{j\,k} (z) \big ) \!\!\big )_{0\leq j,k<n}, \,\, z\in \mathbb D,
\end{align*}
where $C$ is the block operator given by Lemma \ref{computation of C} and 
$$
\theta_{j\,k}(z) = \begin{cases} 0 & \mbox{~\rm if~} j > k+1,\\
y_{j k}  \displaystyle{\prod_{j\leq \ell \leq k}} \theta_{\lambda+2\ell }(z) & \mbox{~\rm if~} j \leq k+1,
\end{cases}
$$ 
with $y_{jk} = x_{jk} \sqrt{(\lambda+2j -1)_{2k-2j+2}},\,\, j \leq k+1$.
\end{theorem}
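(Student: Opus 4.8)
The plan is to read the theorem off the product formula of Theorem \ref{proform} after transporting every object into the concrete reproducing-kernel model via the unitaries already constructed, and then to reduce the block computation to the scalar case \eqref{prod1}. Write $T:=M^{(\lambda,\underline{\mu})}$ and let $\theta$ be its characteristic function. By Remark \ref{quasi invertibility}, $T$ is a cnu contraction with $\mathcal D=\mathcal D_\ast=\mathcal H^{(\lambda,\underline{\mu})}$, so Theorem \ref{proform} applies and gives $\theta(z)=\pi_\ast(\varphi_z)^\ast C_0\,\pi(\varphi_z)$, $z\in\D$, where $\pi,\pi_\ast$ are the companion representations and $C_0\colon\mathcal D\to\mathcal D_\ast$ is the pure contraction $C_0x=-Tx$. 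I would set
$$v:=(\Gamma^{(\lambda+1,\underline{\mu}^\prime)})^\ast U,\qquad v_\ast:=(\Gamma^{(\lambda-1,\underline{\mu}^{\prime\!\prime})})^\ast V,$$
with $U,V,\underline{\mu}^\prime,\underline{\mu}^{\prime\!\prime}$ as in Lemmas \ref{existence of unitary U} and \ref{existence of unitary V}; these are unitaries from $\mathcal D$ and $\mathcal D_\ast$ onto $\oplus_{k}\mathcal H^{(\lambda+2k+1)}$ and $\oplus_{j}\mathcal H^{(\lambda+2j-1)}$. Proving the theorem then amounts to verifying the coincidence relation $v_\ast\theta(z)=\theta^{(\lambda,\underline{\mu})}(z)\,v$ for all $z$.

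To obtain the first (product) formula I would compute $v_\ast\theta(z)v^\ast$ directly. Lemma \ref{realization left and right representations} lets me replace $\pi(\varphi_z),\pi_\ast(\varphi_z)$ by $U^\ast D^{(\lambda+1,\underline{\mu}^\prime)}(\varphi_z)U$ and $V^\ast D^{(\lambda-1,\underline{\mu}^{\prime\!\prime})}(\varphi_z)V$, and the defining identity $D^{(\nu,\cdot)}=\Gamma^{(\nu,\cdot)}\big(\oplus_i D^+_{\nu+2i}\big)(\Gamma^{(\nu,\cdot)})^\ast$ lets me peel off the $\Gamma$'s. After the $U,V,\Gamma$ factors cancel pairwise, the outer factors become $\big(\oplus_j D^+_{\lambda+2j-1}(\varphi_z)\big)^\ast$ and $\oplus_k D^+_{\lambda+2k+1}(\varphi_z)$, while the middle operator is exactly $(\Gamma^{(\lambda-1,\underline{\mu}^{\prime\!\prime})})^\ast V C_0 U^\ast\Gamma^{(\lambda+1,\underline{\mu}^\prime)}$. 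Since $C_0=-T$, this coincides with the block operator $C$ of Lemma \ref{computation of C}. This yields $v_\ast\theta(z)v^\ast=\theta^{(\lambda,\underline{\mu})}(z)$, giving both the first displayed formula and the asserted coincidence with the characteristic function.

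For the block-entry formula I would use $C=\big(\!\big(x_{jk}(\partial^{k-j+1})^\ast\big)\!\big)$, so that the $(j,k)$ block of $\theta^{(\lambda,\underline{\mu})}(z)$ is $\theta_{jk}(z)=x_{jk}\,D^+_{\lambda+2j-1}(\varphi_z)^\ast(\partial^{k-j+1})^\ast D^+_{\lambda+2k+1}(\varphi_z)$. For $j>k+1$ one has $x_{jk}=0$, hence $\theta_{jk}=0$. For $j\le k+1$ I would expand $\prod_{j\le\ell\le k}\theta_{\lambda+2\ell}(z)$ using \eqref{prod1}. Because $\lambda+2\ell+1=\lambda+2(\ell+1)-1$, every interior product $D^+_{\lambda+2\ell+1}(\varphi_z)D^+_{\lambda+2\ell+1}(\varphi_z)^\ast$ equals $I$ by unitarity, so the chain telescopes to
$$\prod_{j\le\ell\le k}\theta_{\lambda+2\ell}(z)=\frac{1}{\sqrt{\prod_{\ell=j}^{k}(\lambda+2\ell-1)(\lambda+2\ell)}}\;D^+_{\lambda+2j-1}(\varphi_z)^\ast(\partial^{k-j+1})^\ast D^+_{\lambda+2k+1}(\varphi_z).$$
The scalar product runs over the consecutive integers $\lambda+2j-1,\dots,\lambda+2k$, hence equals $(\lambda+2j-1)_{2k-2j+2}$, giving $\theta_{jk}(z)=x_{jk}\sqrt{(\lambda+2j-1)_{2k-2j+2}}\,\prod_{j\le\ell\le k}\theta_{\lambda+2\ell}(z)=y_{jk}\prod_{j\le\ell\le k}\theta_{\lambda+2\ell}(z)$, as required; the edge case $j=k+1$ is consistent, since the product is empty, $(\cdot)_0=1$, and $\theta_{k+1,k}(z)=x_{k+1,k}I$.

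The genuinely hard work sits in the earlier lemmas (especially Lemmas \ref{computation of C}, \ref{existence of unitary U} and \ref{existence of unitary V}), so this theorem is largely assembly. The two points demanding care are the exact matching of the domains and codomains of $U,V,\Gamma^{(\lambda\pm1,\cdot)}$, so that the conjugation $v_\ast\theta v^\ast$ reproduces the block operator $C$ without stray scalar factors, and the telescoping of the discrete-series factors together with the Pochhammer identification $\prod_{\ell=j}^k(\lambda+2\ell-1)(\lambda+2\ell)=(\lambda+2j-1)_{2k-2j+2}$. I expect this last bookkeeping, rather than any conceptual difficulty, to be the only real obstacle.
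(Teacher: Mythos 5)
Your proposal is correct and follows essentially the same route as the paper: invoke the product formula via Remark \ref{quasi invertibility}, conjugate by the unitaries $(\Gamma^{(\lambda\pm1,\cdot)})^*U$ and $(\Gamma^{(\lambda\pm1,\cdot)})^*V$ so that Lemmas \ref{realization left and right representations} and \ref{computation of C} identify the outer factors and the middle operator, and then telescope the discrete-series factors using \eqref{prod1} together with the Pochhammer identity $\prod_{\ell=j}^{k}(\lambda+2\ell-1)(\lambda+2\ell)=(\lambda+2j-1)_{2k-2j+2}$. The paper's proof performs exactly this assembly, differing only in presentational details.
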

(Here the constants $x_{jk}$ are as in Lemma \ref{computation of C}, and the factors in the second formula are given by Equation \eqref{prod1}. As usual, the empty product (which occurs when $j=k+1$) denotes the identity.)
\begin{proof}
In view of Remark \ref{quasi invertibility}, the product formula for the characteristic function $\theta$ of $M^{(\lambda, \underline{\mu})}$ takes the form $\theta(z) = - \pi_*(\varphi_z)^* M^{(\lambda, \underline{\mu})} \pi(\varphi_z)$. Define $\theta^{(\lambda, \underline{\mu})}$ by 
$$
\theta^{(\lambda, \underline{\mu})}(z) = {\Gamma^{(\lambda-1, \underline{\mu}^{\prime\!\prime})}}^* V \theta(z) U^*\Gamma^{(\lambda+1, \underline{\mu}^\prime)}, \,z\in \mathbb D. 
$$
Since $\Gamma^{(\cdot, \cdot)}$, $U,V$ are unitaries, it follows that $\theta$ coincides with $\theta^{(\lambda, \underline{\mu})}$. The first formula for $\theta^{(\lambda, \underline{\mu})}$ (in this theorem) is now immediate from Lemmas \ref{realization left and right representations} and \ref{computation of C}. Doing this block multiplication, we obtain $\theta^{(\lambda, \underline{\mu})}(z) = \big (\!\! \big ( \theta_{j\\,k}(z) \big ) \!\!\big )$, where $\theta_{j\,k}(z) = 0$ when $j > k+1$ and, when $j \leq k+1$, 
$$
\theta_{j\,k}(z) = x_{j\,k}D^+_{\lambda+2j -1}(\varphi_z)^* (\partial^{k-j+1})^* D^+_{\lambda+2k+1}(\varphi_z).
$$
But we have, for $j \leq k +1$, 
\begin{align*}
D^+_{\lambda+2j -1}(\varphi_z)^* (\partial^{k-j+1})^* D^+_{\lambda+2k+1}(\varphi_z)&= \prod_{j\leq \ell\leq k}\big ( D^+_{\lambda + 2 \ell -1} (\varphi_z)^* \partial^* D^+_{\lambda+2\ell +1} (\varphi_z)\big )\\
& = \prod_{j\leq \ell \leq k}\big (\sqrt{(\lambda+2\ell)(\lambda+2\ell -1)} \theta_{\lambda+2 \ell}(z) \big )\\
&= \sqrt{(\lambda+2j -1)_{2k-2j+2}} \prod_{j\leq \ell \leq k} \theta_{\lambda+2\ell}(z). 
\end{align*}
This completes the proof.
\end{proof}

\section{Contractive associators in the Cowen-Douglas classes: an extremal case}
In Theorem \ref{explicit characteristic function} of the last section, we obtained the explicit product formula for the generic irreducible multiplicity free contractive associators in $B_n^*(\mathbb D)$.  In this section, we do the same in the extreme opposite case, namely, we look at the irreducible multiplicity free contractive associators in $B_n^*(\mathbb D)$, which are the most non-generic, in a definite sense. Thus we introduce: 
\begin{notation}
For positive integers $n$, and real numbers $\lambda > 1$, let $M_{\lambda,n}$ denote the operator $M^{(\lambda, \underline{\mu})} \in B_n^*(\mathbb D)$, where $\underline{\mu} \in \mathbb R_+^n$ is given by $\mu_k = \frac{k!^2}{(\lambda-1)_{2k}}$, $0\leq k < n$. 
\end{notation} 
In other words, $M_{\lambda, n}$ is the operator $M^{(\lambda,\underline{\mu})}$ with $\lambda > 1,\, \mu_0 =1$ and $\mu_{k+1} = \frac{(k+1)^2\mu_k}{(\lambda+2k-1)(\lambda+2k)}$ for $0\leq k < n-1$. Thus by Lemma \ref{contractivity}, $M_{\lambda,n}$ is a contraction. But $M_{\lambda, n}$ are the only multiplicity free irreducible contractive associators in $B_n^*(\mathbb D)$ which violate all the 
requirements (except the inequality $\lambda > 1$) in the definition of genericity. 
The main result of this section is: 
\begin{thm}\label{theorem 5.2}
For real numbers $\lambda > 1,$ integers $n \geq 1$, $M_{\lambda,n}$ is a cnu contraction, and its characteristic function coincides with the function $\theta_{\lambda, n}: \mathbb D \to \mathcal B \big (\mathcal H^{(\lambda+2n-1)}, \mathcal H^{(\lambda-1)}\big )$ given by 
\begin{align*}
\theta_{\lambda,n}(z) &= \frac{1}{\sqrt{(\lambda-1)_{2n}}} D^+_{\lambda-1}(\varphi_z)^* (\partial^n)^*D^+_{\lambda+2n -1}(\varphi_z)\\
&=\prod_{0\leq k < n} \theta_{\lambda+2k}(z), \,\, z\in \mathbb D.
\end{align*} 
\end{thm}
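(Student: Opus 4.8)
The plan is to feed $M_{\lambda,n}$ into the product formula of Theorem~\ref{proform} and to compute the two companion representations and the middle operator explicitly in this extremal situation. Two preliminary points are cheap. First, the extremal choice $\mu_{k+1}=\tfrac{(k+1)^2\mu_k}{(\lambda+2k-1)(\lambda+2k)}$ is exactly the boundary case of Lemma~\ref{contractivity}, so $M_{\lambda,n}$ is a contraction; and since $M_{\lambda,n}\in B_n^*(\mathbb{D})$, it is a standard property of Cowen--Douglas operators that the joint span of the eigenspaces of $M_{\lambda,n}^*$ is dense in $\mathcal{H}$, which rules out a unitary direct summand and shows $M_{\lambda,n}$ is cnu. (Note that, unlike the generic case of Remark~\ref{quasi invertibility}, $M_{\lambda,n}$ will turn out to be cnu but not pure when $n>1$.) Second, $M_{\lambda,n}$ is an associator with associated representation $D^{(\lambda,\underline{\mu})}$, so Theorem~\ref{proform} gives $\theta(z)=\pi_*(\varphi_z)^*\,C\,\pi(\varphi_z)$ with $C=-M_{\lambda,n}|_{\mathcal{D}}$, and everything reduces to identifying $\pi,\pi_*$ and $C$ after transport to discrete-series spaces.

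I would first treat the second defect space, where the extremal weights simplify things dramatically. Substituting the extremal $\underline{\mu}$ into \eqref{mudoubleprime} gives $\underline{\mu}^{\prime\!\prime}=(1,0,\dots,0)$, so \eqref{God} reads $(1-z\bar w)B^{(\lambda,\underline{\mu})}(z,w)=B^{(\lambda-1,(1,0,\dots,0))}(z,w)$, the right-hand side being the reproducing kernel of the $n$-jet realisation of $\mathcal{H}^{(\lambda-1)}$. The proof of Lemma~\ref{existence of unitary V} uses only \eqref{God} together with the fact that $B^{(\lambda-1,\underline{\mu}^{\prime\!\prime})}$ is a genuine reproducing kernel, and both survive the vanishing of the later entries of $\underline{\mu}^{\prime\!\prime}$; it therefore yields a unitary $V:\mathcal{D}_*\to\mathcal{H}^{(\lambda-1)}$ with $VD_*$ the adjoint of the jet inclusion. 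Then the computation of Lemma~\ref{realization left and right representations}, which rests only on the single-weight relations between $J^{(\lambda)}$ and $J^{(\lambda-1)}$, carries over verbatim to give $V\pi_*(\varphi)V^*=D_{\lambda-1}^+(\varphi)$.

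The real work is the first defect space: I must show $\mathcal{D}\cong\mathcal{H}^{(\lambda+2n-1)}$ by a unitary $U$ with $U\pi(\varphi)U^*=D_{\lambda+2n-1}^+(\varphi)$, and that under $U,V$ the operator $C=-M_{\lambda,n}|_{\mathcal{D}}$ becomes $\tfrac{1}{\sqrt{(\lambda-1)_{2n}}}(\partial^n)^*$. Here Lemma~\ref{existence of unitary U} cannot be quoted: substituting the extremal $\underline{\mu}$ into \eqref{***} forces $1/\mu_k'=0$ for $0\le k<n-1$ with only $\mu_{n-1}'$ finite, which lies on the boundary of the genericity hypothesis. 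I would instead argue directly that the first defect form $\|h\|^2-\|M_{\lambda,n}h\|^2$ depends only on the top-order jet of $h$ and equals a fixed multiple of the $\mathcal{H}^{(\lambda+2n-1)}$-norm of that jet --- equivalently, that the degeneration $\mu_k'\to\infty$ kills every internal component except the weight-$(\lambda+2n-1)$ one --- and then read off $\pi$ and $C$ from boundary specialisations of the block formulae in Lemmas~\ref{matrix representation of inclusion map} and~\ref{computation of C}. Pinning down the normalising constant in $C$, and hence the prefactor $\tfrac{1}{\sqrt{(\lambda-1)_{2n}}}$, is the delicate bookkeeping and is where I expect the main difficulty to lie.

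Granting these identifications, assembling the product formula gives, after transport by $U$ and $V$, the relation $V\theta(z)U^*=D_{\lambda-1}^+(\varphi_z)^*\,\tfrac{1}{\sqrt{(\lambda-1)_{2n}}}(\partial^n)^*\,D_{\lambda+2n-1}^+(\varphi_z)$, which is the first displayed formula and establishes coincidence with $\theta_{\lambda,n}$. The passage to the product $\prod_{0\le k<n}\theta_{\lambda+2k}$ is then purely formal: writing $(\partial^n)^*=(\partial^*)^n$ as the composite of the single maps $\partial^*:\mathcal{H}^{(\lambda+2\ell+1)}\to\mathcal{H}^{(\lambda+2\ell-1)}$, I insert the identities $D_{\lambda+2\ell+1}^+(\varphi_z)D_{\lambda+2\ell+1}^+(\varphi_z)^*=I$ between consecutive factors (legitimate because each $D_\mu^+(\varphi_z)$ is unitary, so no multiplier is incurred) and apply \eqref{prod1} in the form $D_{\lambda+2\ell-1}^+(\varphi_z)^*\partial^*D_{\lambda+2\ell+1}^+(\varphi_z)=\sqrt{(\lambda+2\ell)(\lambda+2\ell-1)}\,\theta_{\lambda+2\ell}(z)$. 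Since $\prod_{0\le\ell<n}\sqrt{(\lambda+2\ell)(\lambda+2\ell-1)}=\sqrt{(\lambda-1)_{2n}}$, the prefactor is absorbed and the two formulae for $\theta_{\lambda,n}$ agree.
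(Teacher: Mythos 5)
Your strategy is genuinely different from the paper's, and the difference matters. You attack the operator directly: compute the defect spaces of $M_{\lambda,n}$, identify the companion representations $\pi$, $\pi_*$, transport the middle operator $C=-M_{\lambda,n}|_{\mathcal D}$, and read the result off Theorem \ref{proform}. The paper goes the other way round: it first observes that $\theta_{\lambda,n}$, being a finite product of the inner functions $\theta_{\lambda+2k}$, is itself inner, so the Sz.-Nagy--Foias model of the cnu contraction with this characteristic function collapses to the compression of $I\otimes S$ on $\mathcal H^{(\lambda-1)}\otimes H^2$ to $\ker\Theta^*_{\lambda,n}$; it computes this kernel to be $V_{n,\lambda-1}$ by an induction resting on Lemmas \ref{lemma 5.3}--\ref{lemma 5.5}; and it then identifies that compression with $M_{\lambda,n}$ through the jet construction and the kernel identity \eqref{God}, which for the extremal weights reads $(1-z\bar w)B^{(\lambda,\underline{\mu})}=B^{(\lambda-1,\underline{e}_0)}$. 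At no point does the paper touch the first defect space of $M_{\lambda,n}$, which is precisely the object your route cannot avoid.

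And that is where your proposal has a genuine gap. You correctly observe that Lemma \ref{existence of unitary U} breaks down here (formula \eqref{***} gives $1/\mu_k'=0$ for $k<n-1$), and you propose to replace it by the claim that the defect form $\|h\|^2-\|M_{\lambda,n}h\|^2$ sees only the top-order jet component of $h$ and equals a fixed multiple of its $\mathcal H^{(\lambda+2n-1)}$-norm, after which $\pi$ and $C$ are to be ``read off from boundary specialisations'' of Lemmas \ref{matrix representation of inclusion map} and \ref{computation of C}. None of this is actually carried out: the degeneration argument, the identification $U\pi(\varphi)U^*=D^+_{\lambda+2n-1}(\varphi)$ (note the weight is $\lambda+2n-1$, not the weight $\lambda+2n-2$ carried by the top summand of $\mathcal H_n^{(\lambda)}$, so a shift coming from the factor $c(\varphi,T)^{-1}$ must be tracked), and above all the normalisation of $C$ producing the prefactor $1/\sqrt{(\lambda-1)_{2n}}$ are all asserted rather than proved, and you yourself flag the last as ``where I expect the main difficulty to lie.'' Since everything downstream (the final assembly and the telescoping into $\prod_{0\le k<n}\theta_{\lambda+2k}$, which is fine) depends on these identifications, the decisive step of the proof is missing. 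The parts you do establish --- contractivity as the boundary case of Lemma \ref{contractivity}, the cnu property from Cowen--Douglas eigenvector density, the transfer of Lemma \ref{existence of unitary V} to the degenerate weight $\underline{\mu}''=\underline{e}_0$, and the correct observation that $M_{\lambda,n}$ is cnu but not pure for $n>1$ --- are sound, but they do not close this gap.
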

(Here, again, the factors in the second formula are given by Equation \eqref{prod1}.)  

The rest of this section is devoted to a proof of this theorem. For $\lambda > 0$, we identify the Hilbert space $\mathcal H^{(\lambda)}\otimes H^2$ with a Hilbert space of holomorphic functions on the bidisc $\mathbb D^2$ via the map 
$$ f \otimes g \mapsto ( (z,w) \mapsto g(z)f(w) ), \,\, f \in \mathcal H^{(\lambda)}, g \in H^2.$$ 
For $p \geq 0$, let $Hom(p)$ denote the vector space of all homogeneous polynomials of degree $p$ in two complex variables $z,w$. Note that we have the orthogonal decomposition
$$\mathcal H^{(\lambda)} \otimes H^2 = \bigoplus_{p \geq 0} Hom(p).$$ 
Let $\triangle:=\{(z,z):z\in \mathbb D\}$, the disc diagonally embedded in the bidisc. For $\lambda > 0,$ $k \geq 0$, let $V_{k,\lambda}$ denote the maximal subspace of $\mathcal H^{(\lambda)} \otimes H^2$ which is orthogonal to all $h\in \mathcal H^{(\lambda)}\otimes H^2$ such that $h$ vanishes to order $\geq k$ on $\triangle$. Define $V_{k,\lambda}(p):=Hom(p) \cap V_{k,\lambda}$. Note that we have the filtration 
\begin{equation} \label{filtration}
\{0\} = V_{0,\lambda}(p) \subseteq V_{1,\lambda}(p) \subseteq \cdots \subseteq V_{p+1, \lambda}(p) = Hom(p)
\end{equation}
as well as the orthogonal decomposition 
\begin{equation}\label{grading}
V_{k,\lambda} = \bigoplus_{p\geq 0} V_{k,\lambda}(p).
\end{equation}
For $0\leq j \leq p$, define the polynomial $h^\lambda_{j,p} \in Hom(p)$ by 
$$
h^\lambda_{j,p}(z,w) := \sum_{j \leq i \leq p} \binom{i}{j} \binom{p-i+\lambda-1}{p-i}z^i w^{p-i}.
$$
\begin{lem}\label{lemma 5.3}
For $0\leq k \leq p+1$, the set $\{h^\lambda_{j,p}: 0\leq j < k \}$ is a basis for $V_{k,\lambda}(p)$. 
\end{lem}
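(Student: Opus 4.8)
The plan is to reinterpret each space $V_{k,\lambda}(p)$ as the kernel of a power of one explicit operator, and then to show that the polynomials $h^\lambda_{j,p}$ form a ``lowering chain'' under that operator. Let $M_{z-w}$ denote the (bounded) operator of multiplication by the polynomial $z-w$ on $\mathcal H^{(\lambda)}\otimes H^2$, viewed as in the excerpt as a space of holomorphic functions on $\mathbb D^2$ with $z$ the $H^2$-variable and $w$ the $\mathcal H^{(\lambda)}$-variable. First I would record the inner product in the monomial basis: the $z^a w^b$ are pairwise orthogonal with $\|z^a w^b\|^2 = b!/(\lambda)_b$. From this one computes the adjoint on monomials,
\[ M_{z-w}^*(z^a w^b) = z^{a-1}w^b - \tfrac{b}{\lambda+b-1}\,z^a w^{b-1}, \]
and notes that $M_{z-w}^*$ sends $Hom(p)$ into $Hom(p-1)$.

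Next I would translate the defining property of $V_{k,\lambda}(p)$. A member of $Hom(p)$ vanishes to order $\geq k$ on $\triangle$ precisely when it is divisible by $(z-w)^k$ (pass to coordinates $s=z-w$, $t=w$ and read off the monomials $s^m t^{\ell}$), so the subspace of such elements is $(z-w)^k\,Hom(p-k)$, which has dimension $p-k+1$; hence $\dim V_{k,\lambda}(p)=k$ for $0\leq k\leq p+1$. Moreover, for $u\in Hom(p)$ and $q\in Hom(p-k)$ one has $\langle u,(z-w)^k q\rangle = \langle (M_{z-w}^*)^k u, q\rangle$, and since $(M_{z-w}^*)^k u\in Hom(p-k)$ this yields the clean description $V_{k,\lambda}(p)=\ker (M_{z-w}^*)^k\cap Hom(p)$.

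The heart of the matter is then the single recursion
\[ M_{z-w}^*\, h^\lambda_{j,p} = h^\lambda_{j-1,p-1}, \]
with the convention $h^\lambda_{-1,\cdot}=0$. To prove it I would extract the coefficient of $z^m w^{p-1-m}$ on the left-hand side. Using the one-line Pochhammer cancellation $\binom{p-i+\lambda-1}{p-i}\tfrac{p-i}{\lambda+p-i-1}=\binom{p-i+\lambda-2}{p-i-1}$, the two contributions (from $i=m+1$ and $i=m$) combine to $\big[\binom{m+1}{j}-\binom{m}{j}\big]\binom{p-m+\lambda-2}{p-m-1}$, which by Pascal's rule equals $\binom{m}{j-1}\binom{p-m+\lambda-2}{p-m-1}$, exactly the coefficient of $z^m w^{p-1-m}$ in $h^\lambda_{j-1,p-1}$.

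Granting the recursion, iterating gives $(M_{z-w}^*)^{j+1}h^\lambda_{j,p}=h^\lambda_{-1,p-j-1}=0$, so $h^\lambda_{j,p}\in \ker(M_{z-w}^*)^{j+1}\cap Hom(p)=V_{j+1,\lambda}(p)\subseteq V_{k,\lambda}(p)$ whenever $j<k$. The family $\{h^\lambda_{j,p}:0\leq j<k\}$ is linearly independent because the lowest power of $z$ occurring in $h^\lambda_{j,p}$ is $z^j$, with nonzero coefficient $\binom{p-j+\lambda-1}{p-j}$, so distinct members have strictly increasing $z$-valuation. Since there are exactly $k$ of them and $\dim V_{k,\lambda}(p)=k$, they form a basis. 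The only genuine computation is the recursion, and even there the difficulty is organizational rather than technical: the key realizations are that $V_{k,\lambda}(p)$ is the kernel of $(M_{z-w}^*)^k$ restricted to $Hom(p)$, and that the $\mathcal H^{(\lambda)}$-weight telescopes so that Pascal's rule applies; once this reframing is in place the verification is immediate.
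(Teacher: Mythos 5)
Your proof is correct, and it takes a genuinely different route from the paper's. The paper argues by direct linear algebra on coefficient vectors: it shows that $\sum_i a_i z^i w^{p-i}$ is orthogonal to $V_{k,\lambda}$ exactly when $\underline{a}\in\ker D^*$ for the binomial matrix $D=\bigl(\!\bigl(\tbinom{i}{j}\bigr)\!\bigr)_{0\le i\le p,\,0\le j<k}$, hence that $\sum_i b_i z^i w^{p-i}\in V_{k,\lambda}(p)$ exactly when the reweighted vector $\bigl(b_i/\tbinom{p-i+\lambda-1}{p-i}\bigr)_{0\le i\le p}$ lies in $\mathrm{Im}\,D$; the $k$ columns of $D$, reweighted, are precisely the $h^\lambda_{j,p}$, so spanning and independence come for free from the fact that the columns of $D$ form a basis of $\mathrm{Im}\,D$. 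You instead identify $V_{k,\lambda}(p)$ with $\ker(M_{z-w}^*)^k\cap Hom(p)$ and exhibit the $h^\lambda_{j,p}$ as a lowering chain; I checked the coefficient computation, and the Pochhammer cancellation together with Pascal's rule does give exactly $M_{z-w}^*h^\lambda_{j,p}=h^\lambda_{j-1,p-1}$ with no stray constant, and the coefficient of $z^jw^{p-j}$ in $h^\lambda_{j,p}$ is indeed the nonzero number $\tbinom{p-j+\lambda-1}{p-j}$, so your valuation argument for independence works. Your route costs two extra steps (computing $\dim V_{k,\lambda}(p)=k$ and proving independence) that the paper avoids, but it buys the exact recursion, which is structurally closer to what the paper needs immediately afterwards: Lemma 5.4 is proved by an entirely analogous computation showing that $\Theta_\lambda^*$ carries suitable combinations of the $h^{\lambda-1}_{j,p}$ to multiples of $h^{\lambda+1}_{j,p-1}$. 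One small point common to both arguments: testing membership in $V_{k,\lambda}(p)$ against only the homogeneous elements $(z-w)^kHom(p-k)$ uses that the subspace of functions vanishing to order $\ge k$ on $\triangle$ is graded by homogeneous degree; you use this implicitly, as does the paper, and it is harmless.
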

\begin{proof}
Consider the $(p+1)\times k$ matrix $D= \big (\!\!\big (\binom{i}{j} \big ) \!\!\big )_{{0\leq i \leq p}\atop{0\leq j < k}}.$  As usual, we view $D$ as a linear operator from $\mathbb C^k$  into $\mathbb C^{p+1}$, with the standard inner products. A straightforward calculation shows that a  polynomial $\sum_{0\leq i \leq p} a_i z^i w^{p-i}  \in Hom(p)$ is orthogonal to $V_{k, \lambda}$ if and only if the vector $\underline{a}$ belongs to the kernel of $D^*$. Therefore, a polynomial   $\sum_{0\leq i \leq p}b_i z^i w^{p-i}  \in Hom(p)$ is in $V_{k, \lambda}$ if and only if it is orthogonal in $\mathcal H^{(\lambda)}\otimes H^2$ to $\sum_{0\leq i \leq p} a_i z^i w^{p-i}$ for all $\underline{a}$ in $\ker D^*$, i.e., if and only if, 
$$\sum_{0\leq i \leq p}\frac{\bar{a}_i b_i}{\binom{p-i+\lambda-1}{p-i}}=0, \,\, \forall \underline{a} \in \ker D^*.$$
That is, $\sum_{0\leq i \leq p} b_iz^i w^{p-i}$ is in $V_{k, \lambda}$ if and only if the vector $\Big (\tfrac{b_i}{\tbinom{p-i+\lambda-1}{p-i}}\Big )_{0\leq i \leq p}$ is orthogonal to $\ker D^*$ in $\mathbb C^{p+1}$, i.e., if and only if this vector  belongs to $Im\, D$. Since the $k$ columns of $D$ form  a basis of $Im\, D$, the result follows. \end{proof}

For $\lambda  > 1$, let $\Theta_\lambda: \mathcal H^{(\lambda+1)} \otimes H^2 \to \mathcal H^{(\lambda -1)} \otimes H^2$ be the characteristic operator corresponding to the characteristic function of $M^{(\lambda)}$. The following formula for the action of $\Theta_\lambda^*$ is from \cite{scal}: 
%
%
\begin{equation}\label{Theta*}
\left(\Theta_{\lambda}^{*} f\right) (z, w) = \frac{1}{\sqrt{\lambda (\lambda - 1)}}\frac{\partial}{\partial w}f(z, w) - \sqrt{\frac{\lambda - 1}{\lambda}} \frac{f(z, w) - f(w, w)}{z - w},\,z, w \in \mathbb{D},
\end{equation}
for $f \in \mathcal H^{(\lambda - 1)}\otimes H^2$. 

\begin{lem}\label{Decomposition of Theta}
For $\lambda > 1,$ $n\geq 1$, the operator $\Theta_{\lambda}^{*}$ maps $V_{n, \lambda-1}$ into $V_{n-1, \lambda+1}$ and $V^\perp_{n,\lambda-1}$ into $V^\perp_{n-1, \lambda+1}$.
\end{lem}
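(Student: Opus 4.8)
The plan is to record the explicit description of the subspaces and then treat the two inclusions separately. By definition $V_{k,\lambda}^\perp$ consists of those $h\in\mathcal H^{(\lambda)}\otimes H^2$ that vanish to order $\geq k$ along the diagonal $\triangle$; since $\triangle$ is the smooth curve $\{z=w\}$, this is the same as being divisible by $(z-w)^k$ (as a holomorphic function on $\mathbb D^2$, equivalently, on each homogeneous piece $Hom(p)$, as a polynomial). From the formula \eqref{Theta*}, $\Theta_\lambda^*$ carries $Hom(p)$ into $Hom(p-1)$, so it preserves the grading \eqref{grading} up to a degree shift; consequently it is enough to prove both inclusions degree by degree.

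The inclusion $\Theta_\lambda^*\big(V_{n,\lambda-1}^\perp\big)\subseteq V_{n-1,\lambda+1}^\perp$ is the easy one. Writing $f=(z-w)^n g\in V_{n,\lambda-1}^\perp$ and using $n\geq 1$, we have $f(w,w)=0$, so the divided-difference term in \eqref{Theta*} is $\tfrac{f(z,w)}{z-w}=(z-w)^{n-1}g$, while $\tfrac{\partial}{\partial w}f=-n(z-w)^{n-1}g+(z-w)^n\tfrac{\partial}{\partial w}g$. Both summands are divisible by $(z-w)^{n-1}$, so $\Theta_\lambda^* f$ vanishes to order $\geq n-1$ on $\triangle$, i.e. $\Theta_\lambda^* f\in V_{n-1,\lambda+1}^\perp$.

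For $\Theta_\lambda^*\big(V_{n,\lambda-1}\big)\subseteq V_{n-1,\lambda+1}$ I would use the explicit basis of Lemma \ref{lemma 5.3}. Since $V_{n,\lambda-1}(p)=\mathrm{span}\{h^{\lambda-1}_{j,p}:0\leq j<n\}$, it suffices to show $\Theta_\lambda^* h^{\lambda-1}_{j,p}\in V_{j,\lambda+1}(p-1)$ for every $j$ (then the $j$-th basis vector lands inside $V_{n-1,\lambda+1}(p-1)$ whenever $j<n$). In fact I expect the sharper statement that $\Theta_\lambda^*$ acts as a weighted shift along the filtration, $\Theta_\lambda^* h^{\lambda-1}_{j,p}=c_{j,p}\,h^{\lambda+1}_{j-1,p-1}$ for suitable scalars $c_{j,p}$ (with $\Theta_\lambda^* h^{\lambda-1}_{0,p}=0$), which delivers the inclusion at one stroke. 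To establish it, substitute the defining sum of $h^{\lambda-1}_{j,p}$ into \eqref{Theta*}, differentiate the monomials in $w$, and simplify the divided difference; the claim then reduces to a single Pochhammer/binomial identity among the coefficients, of the same type as \eqref{Identity 1} and \eqref{Identity 2}, provable by the generating-function / finite-induction device used in Lemmas \ref{matrix representation of multiplication operator} and \ref{matrix representation of inclusion map}.

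The main obstacle is precisely this second inclusion: unlike the first, it cannot be seen from divisibility, since the $h^{\lambda-1}_{j,p}$ are not monomials and $V_{n,\lambda-1}$ is defined by an orthogonality (rather than a vanishing) condition. One must actually carry out the coefficient computation and recognize the output as a multiple of the sequence defining $h^{\lambda+1}_{j-1,p-1}$. A conceivable alternative is to compute $\Theta_\lambda$ itself, as the adjoint of the two operators appearing in \eqref{Theta*}, and then run the divisibility argument to show that $\Theta_\lambda$ raises the order of vanishing by one — which, upon passing to adjoints and using the orthogonality of the grading, is equivalent to the desired inclusion; but $\Theta_\lambda$ turns out to be a degree-dependent weighted shift rather than a transparent differential operator, so I expect the basis computation to be the cleaner route.
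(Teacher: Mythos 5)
Your handling of the easy half, $\Theta_\lambda^*(V^\perp_{n,\lambda-1})\subseteq V^\perp_{n-1,\lambda+1}$ via divisibility by $(z-w)^{n-1}$, is correct and is exactly what the paper dismisses as ``easy to see'' from \eqref{Theta*}; likewise your reduction of the hard half to showing $\Theta_\lambda^*h^{\lambda-1}_{j,p}\in V_{j,\lambda+1}(p-1)$ is precisely the paper's statement \eqref{5.19}. The gap lies in the mechanism you propose for that step: the ``weighted shift'' ansatz $\Theta_\lambda^*h^{\lambda-1}_{j,p}=c_{j,p}\,h^{\lambda+1}_{j-1,p-1}$ is false once $j\geq 2$. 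Concretely, $h^{\lambda-1}_{2,2}(z,w)=z^2$, and \eqref{Theta*} gives
\[
\Theta_\lambda^*(z^2)=-\sqrt{\tfrac{\lambda-1}{\lambda}}\,(z+w)
=-\sqrt{\tfrac{\lambda-1}{\lambda}}\Big(\tfrac{1}{\lambda+1}h^{\lambda+1}_{0,1}+\tfrac{\lambda}{\lambda+1}h^{\lambda+1}_{1,1}\Big),
\]
since $h^{\lambda+1}_{0,1}=(\lambda+1)w+z$ and $h^{\lambda+1}_{1,1}=z$. The image is a combination with \emph{both} coefficients nonzero, not a multiple of $h^{\lambda+1}_{1,1}$, so the coefficient computation you plan will never ``recognize the output as a multiple of the sequence defining $h^{\lambda+1}_{j-1,p-1}$'', and the single Pochhammer identity you expect does not exist. (The lemma itself is unaffected: here $V_{2,\lambda+1}(1)=Hom(1)$, so the required membership still holds.)

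What is true, and what the paper actually proves, is that $\Theta_\lambda^*$ is \emph{triangular} rather than a shift with respect to the filtration \eqref{filtration}: one has $\Theta_\lambda^*h^{\lambda-1}_{0,p}=0$, and the normalized consecutive difference
\[
\Theta_\lambda^*(h^{\lambda-1}_{j+1,p})\Big/\tbinom{\lambda+p-1}{p-j-1}\;-\;\Theta_\lambda^*(h^{\lambda-1}_{j,p})\Big/\tbinom{\lambda+p-1}{p-j}
\]
is a scalar multiple of the single vector $h^{\lambda+1}_{j,p-1}$. Telescoping by finite induction on $j$ then places $\Theta_\lambda^*h^{\lambda-1}_{j,p}$ in $\operatorname{span}\{h^{\lambda+1}_{i,p-1}:0\leq i<j\}=V_{j,\lambda+1}(p-1)$, in general with all lower coefficients nonzero. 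So the repair is to replace the shift ansatz by this difference identity; its verification, obtained by multiplying through by $z-w$ and comparing coefficients, is of exactly the type you anticipated, but it must be applied to the differences, not to the individual basis images.
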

\begin{proof} Using the formula \eqref{Theta*}, it is easy to see that $f\in V^\perp_{n,\lambda-1}$ implies $\Theta^*_\lambda f \in V^\perp_{n-1, \lambda+1}$. Thus 
$\Theta_{\lambda}^{*}$ maps $V^\perp_{n, \lambda-1}$ into $V^\perp_{n-1, \lambda+1}$. In view of the decomposition \eqref{grading}, in order to prove that $\Theta^*_\lambda$ maps $V_{n, \lambda-1}$ into $V_{n-1, \lambda+1}$, it suffices to show that it maps $V_{n, \lambda-1}(p)$ into $V_{n-1, \lambda+1}(p-1)$ for all $p$. Since  $V_{n-1, \lambda+1}(p-1)= Hom(p-1)$ for $p<n$, and since it is clear from equation \eqref{Theta*} that $\Theta_\lambda^*$ maps $Hom(p)$ into $Hom(p-1)$, it is enough to fix $p \geq n$ and show that $\Theta^*_\lambda$ maps $V_{n, \lambda-1}(p)$ into 
$V_{n-1, \lambda+1}(p-1)$ for this $p$. In view of Lemma \ref{lemma 5.3} and the filtration \eqref{filtration}, it suffices to show that 
\begin{equation} \label{5.19}
\Theta_\lambda^*(h_{j,\, p}^{\lambda -1} ) \in V_{j, \lambda+1} (p-1), \,\, 0 \leq j \leq p.
\end{equation} 


Using equation \eqref{Theta*} it is not hard to prove that  $\Theta_\lambda^*(h_{0,\,p}^{\lambda-1}) = 0$ (and hence \eqref{5.19} is true for $j=0$), and 
$$\sqrt{\frac{\lambda}{\lambda-1}}(z-w)\Theta_\lambda^* (h^{\lambda-1}_{j,\, p}) = \binom{\lambda + p - 1}{p-j} z^0 w^p - \sum_{1\leq i \leq p} \binom{i-1}{j-1} \binom{\lambda+p-i-1}{p-i} z^i w^{p-i}, \,\, 1 \leq j \leq p.$$
Using this formula, it is easy to verify that, for $0 \leq j < p$, there is a real number $c$ (depending on $j, p, \lambda$) such that 
$$
\Theta_\lambda^*(h_{j+1, p}^{\lambda-1} ) / \binom{\lambda+p-1}{p-j-1} - \Theta_\lambda^*(h_{j,\,p}^{\lambda -1}) / \binom{\lambda+p-1}{p-j} = c\, h_{j, p-1}^{\lambda+1}. 
$$
(Namely, to verify this, multiply both sides by $z-w$ and use the previous equation.) 
Using Lemma \ref{lemma 5.3} and the filtration \eqref{filtration}, it is now trivial to prove \eqref{5.19} by finite induction on $j$.
\end{proof}
Now, for $\lambda > 1,$ $n\geq 1,$ let $\theta_{\lambda, n}$ be as in Theorem \ref{theorem 5.2}:
$$\theta_{\lambda,n}(z):=\prod_{0\leq k < n} \theta_{\lambda+2k}(z), \,z\in \mathbb D.$$
Since $\theta_{\lambda,n}$ is a finite pointwise product of pure contraction valued holomorphic functions, it follows that $\theta_{\lambda,n}$ is a pure contraction valued holomorphic function on $\mathbb D$.  Therefore, it is a characteristic function. Let $\Theta_{\lambda,n}:\mathcal H^{(\lambda+2n-1)}\otimes H^2 \to \mathcal H^{(\lambda -1)}\otimes H^2$ be the characteristic operator corresponding to $\theta_{\lambda, n}$. 
\begin{lem}\label{lemma 5.5}
For $\lambda > 1,$ $n\geq 1,$ the kernel of $\Theta^*_{\lambda, n}$ is $V_{n,\lambda-1}$.
\end{lem}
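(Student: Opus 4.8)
The plan is to factor $\Theta_{\lambda,n}$ through the scalar building blocks $\Theta_{\lambda+2k}$, exploit the grading of $\mathcal H^{(\mu)}\otimes H^2$ by the spaces $Hom(p)$, and thereby reduce the whole statement to a single graded surjectivity that can be read off the recursion already appearing in the proof of Lemma~\ref{Decomposition of Theta}. Since (by Theorem~\ref{charform}) each characteristic operator is multiplication by its characteristic function in the common $H^2$-variable of the bidisc picture, and since $\theta_{\lambda,n}(z)=\prod_{0\le k<n}\theta_{\lambda+2k}(z)$, I would first record the factorization $\Theta_{\lambda,n}=\Theta_\lambda\,\Theta_{\lambda+2}\cdots\Theta_{\lambda+2n-2}$, whence
\[
\Theta_{\lambda,n}^{*}=\Theta_{\lambda+2n-2}^{*}\cdots\Theta_{\lambda+2}^{*}\,\Theta_{\lambda}^{*},
\]
the factors composing correctly because the codomain of $\Theta_\mu^*$ is the domain of $\Theta_{\mu+2}^*$. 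Each $\Theta_{\mu}^{*}$ sends $Hom(q)$ into $Hom(q-1)$ (clear from \eqref{Theta*}), so $\Theta_{\lambda,n}^{*}$ is homogeneous of degree $-n$: it maps $Hom(p)$ into $Hom(p-n)$, read as $\{0\}$ when $p<n$. Its kernel is therefore graded, and by \eqref{grading} it suffices to prove $\ker\Theta_{\lambda,n}^{*}\cap Hom(p)=V_{n,\lambda-1}(p)$ for each $p$.

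For the inclusion $V_{n,\lambda-1}\subseteq\ker\Theta_{\lambda,n}^{*}$ I would chain Lemma~\ref{Decomposition of Theta}: applied with $\lambda$ replaced by $\lambda+2k$ it gives $\Theta_{\lambda+2k}^{*}(V_{m,\lambda+2k-1})\subseteq V_{m-1,\lambda+2k+1}$, so
\[
V_{n,\lambda-1}\xrightarrow{\Theta_{\lambda}^{*}}V_{n-1,\lambda+1}\xrightarrow{\Theta_{\lambda+2}^{*}}\cdots\xrightarrow{\Theta_{\lambda+2n-2}^{*}}V_{0,\lambda+2n-1}=\{0\},
\]
the final space vanishing by the filtration \eqref{filtration}.

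The reverse inclusion I would obtain by a dimension count on each graded piece. By Lemma~\ref{lemma 5.3}, $\dim V_{n,\lambda-1}(p)=\min(n,p+1)$, and the previous step gives $V_{n,\lambda-1}(p)\subseteq\ker\Theta_{\lambda,n}^{*}\cap Hom(p)$; so it is enough to show $\dim\big(\ker\Theta_{\lambda,n}^{*}\cap Hom(p)\big)\le\min(n,p+1)$. When $p<n$ this is automatic, since then $Hom(p-n)=\{0\}$ and $V_{n,\lambda-1}(p)=Hom(p)$. For $p\ge n$ the bound is equivalent to $\Theta_{\lambda,n}^{*}$ mapping $Hom(p)$ onto $Hom(p-n)$, i.e. to $\dim\ker=n$. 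Since a composite of maps that are surjective on the relevant graded pieces is again surjective, this reduces, via the factorization above, to the single assertion that for every parameter $\mu>1$ and every $p\ge1$ the map $\Theta_\mu^{*}\colon Hom(p)\to Hom(p-1)$ is onto.

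This last surjectivity is where the real work lies, and I expect the nonvanishing of one scalar to be the only genuine obstacle. I would use the recursion established inside the proof of Lemma~\ref{Decomposition of Theta}: for $0\le j<p$ there is a constant $c=c(j,p,\mu)$ with
\[
\Theta_\mu^{*}(h^{\mu-1}_{j+1,p})\Big/\tbinom{\mu+p-1}{p-j-1}-\Theta_\mu^{*}(h^{\mu-1}_{j,p})\Big/\tbinom{\mu+p-1}{p-j}=c\,h^{\mu+1}_{j,p-1}.
\]
Since $\Theta_\mu^{*}(h^{\mu-1}_{0,p})=0$ and, by Lemma~\ref{lemma 5.3}, the $h^{\mu+1}_{j,p-1}$ ($0\le j\le p-1$) form a basis of $Hom(p-1)$ adapted to the filtration \eqref{filtration}, an induction on $j$ shows that whenever $c\ne0$ the vector $\Theta_\mu^{*}(h^{\mu-1}_{j,p})$ lies in $V_{j,\mu+1}(p-1)\setminus V_{j-1,\mu+1}(p-1)$ for $1\le j\le p$; these $p$ vectors are then triangular with respect to the filtration, hence a basis of $Hom(p-1)$, which gives surjectivity (and, incidentally, $\ker\Theta_\mu^{*}\cap Hom(p)=\mathbb{C}\,h^{\mu-1}_{0,p}=V_{1,\mu-1}(p)$, the case $n=1$). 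The nonvanishing of $c$ I would check by isolating the coefficient of $z^p$ on each side after clearing the factor $\sqrt{\mu/(\mu-1)}\,(z-w)$ using the explicit formula for $(z-w)\Theta_\mu^{*}(h^{\mu-1}_{j,p})$ from the proof of Lemma~\ref{Decomposition of Theta}; a short binomial simplification yields $c=-\sqrt{(\mu-1)/\mu}\,\tfrac{\mu}{(p-j)\binom{\mu+p-1}{p-j}}$, which is manifestly nonzero for $\mu>1$ and $0\le j<p$, closing the argument.
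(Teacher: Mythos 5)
Your proof is correct, but it reaches the reverse inclusion by a genuinely different mechanism than the paper. The paper inducts on $n$ via the single factorization $\Theta_{\lambda,n}^*=\Theta_{\lambda+2,n-1}^*\Theta_\lambda^*$, and the crux is the identity $(\Theta_\lambda^*)^{-1}(V_{n-1,\lambda+1})=V_{n,\lambda-1}$, which it gets by writing $f=g+h$ with $g\in V_{n,\lambda-1}$, $h\in V_{n,\lambda-1}^\perp$ and invoking \emph{both} halves of Lemma \ref{Decomposition of Theta} (the invariance of $V_{n,\lambda-1}$ \emph{and} of its orthocomplement) together with the base case $\ker\Theta_\lambda^*=V_{1,\lambda-1}$ cited from \cite[Theorem 3.2]{scal}. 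You instead factor all the way down to the scalar blocks, observe that $\Theta_{\lambda,n}^*$ shifts the grading $Hom(p)\to Hom(p-n)$ so its kernel is graded, get $V_{n,\lambda-1}\subseteq\ker$ by chaining only the forward half of Lemma \ref{Decomposition of Theta}, and close the gap by a rank--nullity count on each $Hom(p)$, reduced to surjectivity of $\Theta_\mu^*\colon Hom(p)\to Hom(p-1)$; that surjectivity you extract from the recursion inside the proof of Lemma \ref{Decomposition of Theta} by checking the constant $c$ is nonzero (your value $c=-\sqrt{(\mu-1)/\mu}\,\mu/\bigl((p-j)\tbinom{\mu+p-1}{p-j}\bigr)$ checks out against the coefficient of $z^p$ after clearing $z-w$). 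What each buys: the paper's orthogonality argument is shorter and needs no nonvanishing computation, but it leans on the orthocomplement statement of Lemma \ref{Decomposition of Theta} and on the external reference for the $n=1$ case; your argument is more computational but self-contained --- it rederives $\ker\Theta_\lambda^*\cap Hom(p)=V_{1,\lambda-1}(p)$ along the way and never uses the $V^\perp\mapsto V^\perp$ half of Lemma \ref{Decomposition of Theta}.
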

\begin{proof}
By \cite[Theorem 3.2]{scal}, $\ker \Theta^*_\lambda = V_{1, \lambda-1}$ . Since $\theta_{\lambda,1}  = \theta_\lambda,$ this proves the result for $n=1$. Now, let $n >1$. Then $\theta_{\lambda, n}= \theta_{\lambda} \theta_{\lambda+2, n-1}$ (pointwise product) and hence $\Theta_{\lambda, n} = \Theta_\lambda \Theta_{\lambda+2, n-1}$.  Therefore, $\ker (\Theta^*_{\lambda,n}) = {\Theta^*_\lambda}^{-1} \big ( \ker (\Theta_{\lambda+2,n-1})\big )$ for $n > 1$. Hence, to complete the proof by induction on $n$, it suffices to show that  ${\Theta^*_\lambda}^{-1}(V_{n-1, \lambda+1}) = V_{n,\lambda-1}$. 

Let $f\in {\Theta^*_\lambda}^{-1} (V_{n-1, \lambda+1})$. That is, $\Theta^*_\lambda f \in V_{n-1, \lambda+1}$. Write $f = g+h$, where $g\in V_{n,\lambda-1},$ $h\in V^\perp_{n,\lambda-1}$. By Lemma \ref{Decomposition of Theta}, $\Theta^*_\lambda h \in V^\perp_{n-1, \lambda+1}$ and $\Theta^* _\lambda g \in V_{n-1, \lambda+1},$ so that $\Theta^*_\lambda h = \Theta^*_\lambda f - \Theta^*_\lambda g \in V_{n-1, \lambda+1}$. So 
$\Theta^*_\lambda(h) \in V_{n-1, \lambda+1} \cap V^\perp_{n-1, \lambda+1} = \{0\}$. Thus $h\in \ker \Theta^*_\lambda = V_{1, \lambda-1} \subseteq V_{n, \lambda-1}$. Hence $h\in V_{n,\lambda-1} \cap V^\perp_{n, \lambda-1} = \{0\}$. Thus $h=0$, and hence $f=g\in V_{n,\lambda-1}.$ This proves that $\Theta^*_\lambda f \in V_{n-1, \lambda+1} \implies f \in V_{n,\lambda-1}$. Conversely, by Lemma \ref{Decomposition of Theta}, $f\in V_{n, \lambda-1} \implies \Theta^*_\lambda f \in V_{n-1, \lambda+1}$.   
So, ${\Theta^*_\lambda}^{-1}(V_{n-1, \lambda+1} ) = V_{n, \lambda-1}.$ 
\end{proof}

\begin{proof}[Proof of Theorem 5.2] The formula \eqref{prod1} implies that that two definitions of $\theta_{\lambda,n}$ are equivalent. Since, by \cite{scal}, $\theta_{\lambda}$ is an inner function for each $\lambda > 1$, the second formula for $\theta_{\lambda, n}$ shows that $\theta_{\lambda, n}$ is a pointwise product of finitely many inner functions. Therefore, $\theta_{\lambda,n}$ is an inner function.  Hence the description of the Nagy-Foias model  \cite{Na-Fo} for the cnu contractive operator $T$ with characteristic function $\theta_{\lambda,n}$ simplifies as follows. 

Let $T$ be the compression of $I \otimes S:\mathcal H^{(\lambda-1)}\otimes H^2 \to \mathcal H^{(\lambda-1)}\otimes H^2$ (where, as before, $S$ is the multiplication operator on $H^2$) to the subspace $\ker \Theta^*_{\lambda,n} = V_{n,\lambda-1}$ (Lemma \ref{lemma 5.5}). Then the characteristic function of $T$ is $\theta_{\lambda, n}$. 

Let us identify $\triangle$ with $\mathbb D$ via the map $z \mapsto (z,z)$, $z\in \mathbb D$. Define the map $J:V_{n, \lambda-1} \to {\rm Hol}(\mathbb D, \mathbb C^n)$ by 
$$
Jf = \Big ( \frac{1}{(\lambda-1)_i} \frac{\partial^if}{\partial z^i}\Big |_{ \triangle} \Big)_{0\leq i < n}, f\in V_{n,\lambda-1}.
$$
Let $\mathcal H$ be the image of $J$. It is immediate from the definition of $V_{n,\lambda-1}$ that $J$ is a bijection between $V_{n,\lambda-1}$ and $\mathcal H$. Use this bijection to transfer the inner product from $V_{n,\lambda-1}$ to $\mathcal H$.  This converts $\mathcal H$ into a Hilbert space, and $J: V_{n,\lambda-1} \to \mathcal H$  is a unitary. Following the argument in \cite{DMV}, it is easy to see that (a) $\mathcal H$ is a functional Hilbert space with reproducing kernel $K:\mathbb D \times \mathbb D \to \mathbb C^{n\times n}$ given by 
$$K(z,w) = (1-z\bar{w})^{-1} B^{(\lambda-1, \underline{e}_0)}(z,w),\,\, \underline{e}_0 = (1,0,\ldots,0) \in \mathbb R^n,$$ 
and (b) $J$ intertwines $T$ with the multiplication operator $M$ on $\mathcal H$. This is a minor variation of the jet construction discussed in \cite{DMV}. 

Now, if $\underline{\mu}$ is the special parameter described in Notation 5.1, then, in the notation of \eqref{mudoubleprime}, we have $\underline{\mu}^{\prime\!\prime} = \underline{e}_0$.  Therefore, by Equation \eqref{God}, we have $(1-z\bar{w}) B^{(\lambda, \underline{\mu})}(z,w)= B^{(\lambda-1, \underline{e}_0)}(z,w)$. Hence  $K=B^{(\lambda, \underline{\mu})}$, and therefore $\mathcal H= \mathcal H^{(\lambda, \underline{\mu})}$, $M= M_{\lambda, n}$. Thus $M_{\lambda,n}$ is unitarily equivalent to $T$ via $J$. Therefore, $M_{\lambda,n}$ is a cnu contraction and the characteristic function of $M_{\lambda,n}$ coincides with the characteristic function $\theta_{\lambda,n}$ of $T$.  
\end{proof}
Theorem 5.2 prompts us to pose: 
\begin{conjecture}
The characteristic function of any multiplicity free cnu contractive associator in $B_n^*(\mathbb D)$ is the pointwise product of the characteristic functions of finitely many generic multiplicity free contractive associators from ${\cup}_{1\leq m \leq n}B_m^*(\D).$ 
\end{conjecture}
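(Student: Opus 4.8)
The plan is to realize $M_{\lambda,n}$ as the Sz.-Nagy--Foias model operator attached to the candidate symbol $\theta_{\lambda,n}$, and then to identify that model with $M_{\lambda,n}$ through a jet construction. First I would dispose of the purely formal points. That the two displayed expressions for $\theta_{\lambda,n}$ agree follows from the single-variable formula \eqref{prod1}: writing each factor as $\theta_{\lambda+2k}(z)=\tfrac{1}{\sqrt{(\lambda+2k)(\lambda+2k-1)}}D^+_{\lambda+2k-1}(\varphi_z)^*\partial^*D^+_{\lambda+2k+1}(\varphi_z)$ and multiplying over $0\le k<n$, the intermediate unitaries $D^+_{\lambda+2k+1}(\varphi_z)$ cancel against the adjoints $D^+_{\lambda+2k+1}(\varphi_z)^*$ in consecutive factors, the derivations compose to $(\partial^n)^*$, and the scalars multiply to $(\lambda-1)_{2n}^{-1/2}$. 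Since each $\theta_{\lambda+2k}$ is inner (by \cite{scal}) and a finite pointwise product of pure-contraction-valued inner functions is again inner, $\theta_{\lambda,n}$ is an inner characteristic function; by \cite{Na-Fo} it is the characteristic function of some cnu contraction $T$. The substance of the theorem is then that $T$ is unitarily equivalent to $M_{\lambda,n}$.

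For an inner symbol the Nagy--Foias model degenerates to a single summand: $T$ is the compression of $I\otimes S$ on $\mathcal H^{(\lambda-1)}\otimes H^2$ to the model space $\ker\Theta_{\lambda,n}^*$, where $\Theta_{\lambda,n}$ is multiplication by $\theta_{\lambda,n}$. So the first genuine step is to compute this kernel. I would filter $\mathcal H^{(\lambda-1)}\otimes H^2$ (viewed as holomorphic functions on $\mathbb D^2$) by the subspaces $V_{k,\lambda-1}$ spanned by jets of order $<k$ along the diagonal $\triangle=\{(z,z)\}$, and exploit the factorization $\Theta_{\lambda,n}=\Theta_\lambda\Theta_{\lambda+2,n-1}$ (coming from the pointwise product of the symbols) to run an induction on $n$. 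Using the explicit formula for $\Theta_\lambda^*$ from \cite{scal}, I would show that $\Theta_\lambda^*$ carries $V_{n,\lambda-1}$ into $V_{n-1,\lambda+1}$ and its orthocomplement into the corresponding orthocomplement; together with the base case $\ker\Theta_\lambda^*=V_{1,\lambda-1}$ this gives $\ker\Theta_{\lambda,n}^*=(\Theta_\lambda^*)^{-1}(V_{n-1,\lambda+1})=V_{n,\lambda-1}$.

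It remains to identify the compression of $I\otimes S$ to $V_{n,\lambda-1}$ with $M_{\lambda,n}=M^{(\lambda,\underline{\mu})}$. Here I would invoke the jet construction of \cite{DMV}: the map $J$ sending $f\in V_{n,\lambda-1}$ to the tuple $\big(\,(\lambda-1)_i^{-1}\,\partial^{i}f\big|_{\triangle}\,\big)_{0\le i<n}$ is a unitary onto a functional Hilbert space $\mathcal H$ of $\mathbb C^n$-valued functions, it intertwines the model operator with multiplication $M$ on $\mathcal H$, and a direct computation gives the reproducing kernel of $\mathcal H$ as $(1-z\bar w)^{-1}B^{(\lambda-1,\underline{e}_0)}(z,w)$. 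Finally, for the special parameter $\underline{\mu}$ defining $M_{\lambda,n}$ one checks that the vector $\underline{\mu}''$ of \eqref{mudoubleprime} equals $\underline{e}_0$, so Equation \eqref{God} yields $(1-z\bar w)B^{(\lambda,\underline{\mu})}=B^{(\lambda-1,\underline{e}_0)}$; hence this kernel is exactly $B^{(\lambda,\underline{\mu})}$, whence $\mathcal H=\mathcal H^{(\lambda,\underline{\mu})}$ and $M=M_{\lambda,n}$. Transporting everything through $J$ shows $M_{\lambda,n}$ is a cnu contraction whose characteristic function coincides with $\theta_{\lambda,n}$, as claimed.

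The hard part will be the kernel computation of the second paragraph. Establishing $\ker\Theta_{\lambda,n}^*=V_{n,\lambda-1}$ hinges on controlling the action of $\Theta_\lambda^*$ on each homogeneous piece of the diagonal-jet filtration, and the cleanest route seems to be to produce an explicit basis $\{h^{\lambda}_{j,p}\}$ of the homogeneous part $V_{k,\lambda}(p)$ and to verify, through a somewhat delicate binomial/Pochhammer identity, that $\Theta_\lambda^*$ sends suitably normalized consecutive basis differences into single lower-order basis vectors; the induction then closes, but this polynomial bookkeeping is where the real effort lies. By contrast, once \eqref{God} and the parameter identity $\underline{\mu}''=\underline{e}_0$ are in hand, the jet-construction step is essentially routine.
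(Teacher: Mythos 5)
The statement you were asked to prove is the paper's closing \emph{Conjecture}, which the authors explicitly leave open: the paper contains no proof of it. What you have written is, almost step for step, the paper's proof of Theorem \ref{theorem 5.2}, i.e.\ the identification of the \emph{extremal} operators $M_{\lambda,n}$ (those with $\mu_{k+1}=\tfrac{(k+1)^2\mu_k}{(\lambda+2k-1)(\lambda+2k)}$ for every $k$) with the Sz.-Nagy--Foias model of the inner function $\prod_{0\le k<n}\theta_{\lambda+2k}$, via the diagonal-jet filtration $V_{k,\lambda-1}$, the kernel computation $\ker\Theta_{\lambda,n}^*=V_{n,\lambda-1}$, and the jet construction of \cite{DMV} combined with the identity $\underline{\mu}''=\underline{e}_0$ and Equation \eqref{God}. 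That argument is sound, and it does verify the Conjecture for the extremal family, each factor $\theta_{\lambda+2k}$ being the characteristic function of the rank-one generic associator $M^{(\lambda+2k)}\in B_1^*(\mathbb D)$.

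But the Conjecture asserts the factorization for \emph{every} multiplicity free cnu contractive associator in $B_n^*(\mathbb D)$. By \cite{ClassCD} and Lemma \ref{contractivity}, the irreducible ones among these are the operators $M^{(\lambda,\underline{\mu})}$ with $\lambda\ge 1$ and $\mu_{k+1}/\mu_k\ge (k+1)^2/\big((\lambda+2k-1)(\lambda+2k)\big)$ for all $k$. Your argument covers only the boundary stratum where every one of these inequalities is an equality (and $\lambda>1$); Theorem \ref{explicit characteristic function} covers the opposite extreme where every inequality is strict, and there the claim is a trivial one-factor product. Nothing in your proposal addresses the intermediate cases --- for instance $n=3$ with one ratio an equality and the other strict, or the case $\lambda=1$ --- where the single-block jet construction does not apply and the genericity hypotheses underlying Lemmas \ref{existence of unitary U}--\ref{computation of C} fail. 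For such parameters one would have to exhibit a factorization of $\theta^{(\lambda,\underline{\mu})}$ into generic factors of various ranks $m\le n$, and it is exactly this that the authors do not know how to do. So the proposal does not prove the stated result; it proves, correctly but redundantly, the already-established special case recorded as Theorem \ref{theorem 5.2}.
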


\subsubsection*{Acknowledgement} This work is the result of the joint research initiated at the Indian Statistical Institute between the first and the third authors. It was further investigated in the PhD thesis \cite{SH} of the second author and was completed while the first author was visiting the Indian Institute of Science. We thank both the Indian Statistical Institute and the Indian Institute of Science for providing a stimulating environment to complete this work.

\end{document}